\let\mathbb\mathds
\DeclareMathAlphabet\oldmathcal{OMS}        {cmsy}{b}{n}
\SetMathAlphabet    \oldmathcal{normal}{OMS}{cmsy}{m}{n}
\DeclareMathAlphabet\oldmathbcal{OMS}       {cmsy}{b}{n}
\newtheorem{theorem}{Theorem}[section]
\newtheorem{lemma}[theorem]{Lemma}
\newtheorem{proposition}[theorem]{Proposition}
\newtheorem{corollary}[theorem]{Corollary}
\newtheorem{def/prop}[theorem]{Definition/Proposition}
\newtheorem{algorithm}[theorem]{Algorithm}
\newenvironment{example}{\medskip \refstepcounter{theorem}
\noindent  {\bf Example \thetheorem}.\rm}{\,}
\newenvironment{remark}{\medskip \refstepcounter{theorem}
\newcommand     {\comment}[1]   {}
\newcommand{\mute}[2] {}
\newcommand     {\printname}[1] {}

\noindent  {\bf Remark \thetheorem}.\rm}{\,}
\newtheorem*{ack}{Acknowledgements}
\def\<{\langle}
\def\>{\rangle}
\def\BOne{{\mathchoice {\rm 1\mskip-4mu l} {\rm 1\mskip-4mu l}
                          {\rm 1\mskip-4.5mu l} {\rm 1\mskip-5mu l}}}
\def\tr{{\rm tr}~}
\def\fract#1#2{\raise4pt\hbox{$ #1 \atop #2 $}}
\def\decdnar#1{\phantom{\hbox{$\scriptstyle{#1}$}}
\left\downarrow\vbox{\vskip15pt\hbox{$\scriptstyle{#1}$}}\right.}
\def\bbc{{\mathbb C}}
\def\bbp{{\mathbb P}}
\def\bbq{{\mathbb Q}}
\def\bbr{{\mathbb R}}
\def\bbz{{\mathbb Z}}
\def\gra{\alpha}
\def\grb{\beta}
\def\grg{\gamma}
\def\grk{\kappa}
\def\grl{\lambda}
\def\gro{\omega}
\def\grs{\sigma}
\def\grt{\tau}
\def\grD{\Delta}
\def\grG{\Gamma}
\def\grL{\Lambda}
\def\grO{\Omega}
\def\bfl{{\bf l}}
\def\bfp{{\bf p}}
\def\bfv{{\bf v}}
\def\bfw{{\bf w}}
\def\cala{{\mathcal A}}
\def\cald{{\mathcal D}}
\def\calf{{\mathcal F}}
\def\cali{{\mathcal I}}
\def\call{{\mathcal L}}
\def\calp{{\mathcal P}}
\def\cals{{\oldmathcal S}}
\def\la#1{\hbox to #1pc{\leftarrowfill}}
\def\ra#1{\hbox to #1pc{\rightarrowfill}}
\def\calz{{\oldmathcal Z}}
\def\gp{{\mathfrak p}}
\def\gt{{\mathfrak t}}
\def\gu{{\mathfrak u}}
\def\gz{{\mathfrak z}}
\def\gtz{\tilde{\mathfrak z}}
\def\gA{{\mathfrak A}}
\def\gI{{\mathfrak I}}
\def\hook{\mathbin{\hbox to 6pt{%
                 \vrule height0.4pt width5pt depth0pt
                 \kern-.4pt
                 \vrule height6pt width0.4pt depth0pt\hss}}}
\def\th{\tilde{h}}
\def\tU{\tilde{U}}
\def\12{\xi_{k_1,k_2}}
\def\m5{M^5_{k_1,k_2}}
\begin{document}

\title{The Sasaki Join, Hamiltonian 2-forms, and Sasaki-Einstein Metrics}

\author{Charles P. Boyer and Christina W. T{\o}nnesen-Friedman}\thanks{Both authors were partially supported by grants from the Simons Foundation, CPB by (\#245002) and CWT-F by (\#208799)}
\address{Charles P. Boyer, Department of Mathematics and Statistics,
University of New Mexico, Albuquerque, NM 87131.}
\email{cboyer@math.unm.edu} 
\address{Christina W. T{\o}nnesen-Friedman, Department of Mathematics, Union
College, Schenectady, New York 12308, USA } \email{tonnesec@union.edu}

\keywords{Sasaki-Einstein metrics, join construction, Hamiltonian 2-form}

\subjclass[2000]{Primary: 53D42; Secondary:  53C25}

\maketitle

\markboth{Sasaki Join, K\"ahler Admissible}{Charles P. Boyer and Christina W. T{\o}nnesen-Friedman}


\begin{abstract}
By combining the join construction \cite{BGO06} from Sasakian geometry with the Hamiltonian 2-form construction \cite{ApCaGa06} from K\"ahler geometry, we recover Sasaki-Einstein metrics discovered by physicists \cite{GMSW04b}. Our geometrical approach allows us to give an algorithm for computing the topology of these Sasaki-Einstein manifolds. In particular, we explicitly compute the cohomology rings for several cases of interest and give a formula for homotopy equivalence in one particular 7-dimensional case. We also show that our construction gives at least a two dimensional cone of both Sasaki-Ricci solitons and extremal Sasaki metrics.
\end{abstract}


\section{Introduction}

Recently the authors have been able to obtain many new results on extremal Sasakian geometry \cite{BoTo12b,BoTo11,BoTo13} by giving a geometric construction that combines the `join construction' of \cite{BG00a,BGO06} with the `admissible construction of Hamiltonian 2-forms' for extremal K\"ahler metrics described in \cite{ApCaGa06,ACGT04,ACGT08,ACGT08c}. In particular, we have proven the existence of Sasaki metrics of constant scalar curvature in a countable number of contact structures of Sasaki type with 2-dimensional Sasaki cones on $S^3$-bundles over Riemann surfaces of positive genus. In this case the contact bundle $\cald$ satisfies $c_1(\cald)\neq 0$. These are all the 2-dimensional Sasaki cones on $S^3$-bundles over Riemann surfaces that one can construct through our method. In the present paper we consider the case of certain lens space bundles over a compact positive K\"ahler-Einstein manifold whose contact bundle has vanishing first Chern class. In this case a constant scalar curvature Sasaki metric (CSC) implies the existence of a Sasaki-Einstein metric in the CSC ray.

Earlier, physicists \cite{GHP03,GMSW04a,GMSW04b,CLPP05,MaSp05b} working on the AdS/CFT correspondence had discovered a method of constructing Sasaki-Einstein metrics in dimension $2n+3$ from a K\"ahler-Einstein metric on a $2n$-dimensional manifold. Their method is very closely related to the Hamiltonian 2-form approach of \cite{ApCaGa06} (cf. Section 4.3 of \cite{Spa10}). In the present paper we show that the physicist's results fit naturally into our geometric construction. Furthermore, our geometric approach leads naturally to an algorithm for computing the cohomology ring of the $2n+3$-manifolds. In particular, we explicitly compute the cohomology ring of all such examples in dimension $7$ showing that there are a countably infinite number of distinct homotopy types of such manifolds.

Our procedure involves taking a join of a regular Sasaki-Einstein manifold $M$ with the weighted 3-sphere $S^3_\bfw$, that is, $S^3$ with its the standard contact structure, but with a weighted contact 1-form whose Reeb vector field generates rotations with different weights for the two complex coordinates $z_1,z_2$ of $S^3\subset \bbc^2$. We call this the $S^3_\bfw$-join.

\begin{theorem}\label{admjoinse}
Let $M_{l_1,l_2,\bfw}=M\star_{l_1,l_2}S^3_\bfw$ be the $S^3_\bfw$-join with a regular Sasaki manifold $M$ which is an $S^1$-bundle over a compact positive K\"ahler-Einstein manifold $N$ with a primitive K\"ahler class $[\gro_N]\in H^2(N,\bbz)$. Assume that the relatively prime positive integers $(l_1,l_2)$ are the relative Fano indices given explicitly by 
$$l_{1}=\frac{\cali_N}{\gcd(w_1+w_2,\cali_N)},\qquad   l_2=\frac{w_1+w_2}{\gcd(w_1+w_2,\cali_N)},$$ where $\cali_N$ denotes the Fano index of $N$. 

Then for each vector $\bfw=(w_1,w_2)\in \bbz^+\times\bbz^+$ with relatively prime components satisfying $w_1>w_2$ there exists a Reeb vector field $\xi_\bfv$ in the 2-dimensional $\bfw$-Sasaki cone on $M_{l_1,l_2,\bfw}$ such that the corresponding Sasakian structure $\cals=(\xi_\bfv,\eta_\bfv,\Phi,g)$ is Sasaki-Einstein.  Additionally (up to isotopy) the Sasakian structure associated to every single ray, $\xi_\bfv$, in the $\bfw$-Sasaki cone is a Sasaki-Ricci soliton as well as extremal.
\end{theorem}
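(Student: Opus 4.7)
The overall strategy is to reduce the Sasakian statements to statements about the transverse K\"ahler geometry of the quotient by the Reeb foliation, which by the join construction of \cite{BG00a,BGO06} has the structure of a projective bundle amenable to the admissible K\"ahler machinery of Apostolov--Calderbank--Gauduchon--T\o nnesen-Friedman. First I would write down, explicitly, the quotient of $M_{l_1,l_2,\bfw}$ by the locally free circle action generated by the Sasaki cone, and identify it with an admissible projective bundle $\grF_{l_1,l_2,\bfw}\to N$ of the form $P(\BOne\oplus L)$ for an appropriate line orbibundle $L$ over $N$ determined by $\bfw$ and the primitive K\"ahler class $[\gro_N]$. At this point the 2-dimensional $\bfw$-Sasaki cone corresponds, on the quotient side, to a one-parameter family of admissible K\"ahler classes together with a choice of normalization of the Reeb vector field in that cone. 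I would then verify the transverse Fano/Calabi--Yau hypotheses, in particular checking that the integrality/primitivity assumption on $[\gro_N]$ together with the explicit formulae
$$l_1=\tfrac{\cali_N}{\gcd(w_1+w_2,\cali_N)},\qquad l_2=\tfrac{w_1+w_2}{\gcd(w_1+w_2,\cali_N)}$$
is exactly what forces $c_1(\cald)=0$ for the contact bundle $\cald$ on $M_{l_1,l_2,\bfw}$, so that $c_1^B$ is a positive multiple of the transverse K\"ahler class.

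Once the quotient is understood, I would invoke the admissible existence theorems of \cite{ApCaGa06,ACGT08,ACGT08c}: on such a projective line bundle over a K\"ahler--Einstein base, the extremal equation reduces to a single ODE on an interval, and the corresponding one-variable polynomial problem is always solvable inside the admissible cone; this gives extremality of every ray in the $\bfw$-Sasaki cone after transverse scaling (up to isotopy). For the stronger CSC/Sasaki--Einstein claim, the corresponding ODE reduces to solving a system of polynomial conditions in a single parameter $v=v_2/v_1$; with $c_1(\cald)=0$, a CSC transverse K\"ahler metric automatically satisfies the transverse Einstein condition and therefore produces a Sasaki--Einstein structure in that ray. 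The existence of a solution of this CSC polynomial is where the precise arithmetic of $(l_1,l_2,w_1,w_2)$ enters, and I would match the resulting metric, via the identification of the Hamiltonian 2-form with the Killing tensor used by the physicists, to the metrics of \cite{GMSW04b}.

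For the Sasaki--Ricci soliton assertion on each ray I would appeal to the admissible Ricci-soliton existence result: on an admissible projective bundle over a K\"ahler--Einstein base the soliton equation again reduces to a one-variable ODE which admits a unique smooth solution producing a transverse K\"ahler--Ricci soliton on every admissible K\"ahler class with $c_1^B>0$, and then lift this to a Sasaki--Ricci soliton on $M_{l_1,l_2,\bfw}$ along the corresponding ray. Uniqueness of the Reeb vector field satisfying the Futaki-type obstruction ensures that, within a fixed $\bfw$-Sasaki cone, the soliton, the Einstein point, and the extremal representatives sit in a single coherent picture.

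The main obstacle I anticipate is the arithmetic bookkeeping in the first step: one must keep track of the relationship between the primitivity of $[\gro_N]$, the Fano index $\cali_N$, the weights $(w_1,w_2)$, and the join parameters $(l_1,l_2)$, and verify that these together imply simultaneously (i) the join is a smooth manifold (not just an orbifold), (ii) $c_1(\cald)=0$, and (iii) the admissible class on the quotient sits in the interior of the K\"ahler cone for the full range of Reeb vectors in the $\bfw$-Sasaki cone. Once this is pinned down cleanly, the existence statements follow from the admissible ODE analysis, and the translation to the physicists' explicit metrics is essentially a change of coordinates on the interval parametrizing the moment map of the Hamiltonian 2-form.
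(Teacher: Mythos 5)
Your overall strategy coincides with the paper's: pass to the quotient of the Reeb flow, identify it as an admissible projective bundle over $N$, check that the stated $(l_1,l_2)$ are exactly the relative Fano indices forcing $c_1(\cald_{l_1,l_2,\bfw})=0$, and reduce the Einstein, soliton, and extremal equations to one-variable ODEs in the moment-map coordinate. But there are two concrete gaps. First, you never say why the Einstein condition, which after the ODE reduction becomes the single integral equation
$$\int_{-1}^1\bigl((1-c)-(1+c)\gz\bigr)\,\bigl((c+t)+(c-t)\gz\bigr)^{d_N}\,d\gz=0,\qquad t=w_2/w_1,\quad c=v_2/v_1,$$
actually has a solution; you attribute solvability to ``the precise arithmetic of $(l_1,l_2,w_1,w_2)$'', but that arithmetic only enters in making $c_1(\cald)=0$. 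The existence of a root is an intermediate value theorem argument: the left-hand side is positive at $c=t$ and tends to $-\infty$ as $c\to+\infty$, so a root $c_t\in(t,+\infty)$ exists for every admissible $\bfw$. This step is the entire content of the existence claim and cannot be left implicit.

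Second, and more seriously, the root $c_t$ is generically irrational, so the Reeb field $\xi_\bfv$ realizing the Einstein metric is irregular and admits no quotient orbifold; your reduction to ``the transverse K\"ahler geometry of the quotient'' is literally available only for quasi-regular rays. The paper closes this gap by transporting the admissible data to the Sasaki level: the moment map, the connection form, and the profile function $\Theta$ all pull back to $M_{l_1,l_2,\bfw}$, and the resulting transverse metric formulas depend continuously on $(v_1,v_2)$ as a pair of positive reals, so the $\eta$-Einstein (hence, after a transverse homothety, Sasaki--Einstein) structure at $c=c_t$ is obtained by continuity rather than by a quotient construction. Without some such device your argument produces Sasaki--Einstein metrics only for the special weights where $c_t$ is rational. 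The extremal and Ricci-soliton claims for every ray face the same issue, although there the resolution you sketch (solvability of the corresponding ODEs for all real $\bfv$, with positivity via the Hwang--Guan root counting in the extremal case) is essentially the paper's.
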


By the $\bfw$-Sasaki cone we mean the subcone of Sasaki cone spanned by the 2-vector $\bfw$.
Note  also that if $N$ has no Hamiltonian vector fields, then the $\bfw$-Sasaki cone is the full Sasaki cone, so in this case the Sasaki cone is exhausted by extremal Sasaki metrics as well as Sasaki-Ricci solitons.

Most of the SE structures in Theorem \ref{admjoinse} are irregular. Such structures have irreducible transverse holonomy \cite{HeSu12b}, implying there can be no generalization of the join procedure to the irregular case. We must deform within the Sasaki cone to obtain them. Furthermore, it follows from \cite{RoTh11,CoSz12} that constant scalar curvature Sasaki metrics (hence, SE) imply a certain K-semistability.

An easy case in all possible dimension is joining with the standard odd dimensional sphere in which case we obtain:

\begin{theorem}\label{setop}
For each pair of ordered ($w_1>w_2$) relatively prime positive integers $(w_1,w_2)$ there is a $2r+3$-dimensional Sasaki-Einstein manifold $M^{2r+3}_{l_1,l_2,\bfw}$ whose cohomology ring is 
$$\bbz[x,y]/(w_1w_2l_1^2x^2,x^{r+1},x^2y,y^2)$$
where $x,y$ are classes of degree $2$ and $2r+1$, respectively, and $(l_1,l_2)$ are given by
$$(l_1,l_2)=\bigl(\frac{r+1}{\gcd(w_1+w_2,r+1)},\frac{w_1+w_2}{\gcd(w_1+w_2,r+1)}\bigr).$$
\end{theorem}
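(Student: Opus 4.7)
Theorem~\ref{setop} is the case of Theorem~\ref{admjoinse} in which $N=\mathbb{CP}^r$ carries its Fubini--Study K\"ahler--Einstein metric (with primitive K\"ahler class $u\in H^2(\mathbb{CP}^r;\bbz)$) and $M=S^{2r+1}$ is the associated Hopf $S^1$-bundle. Since $\mathcal{I}_{\mathbb{CP}^r}=r+1$, the values of $l_1,l_2$ in the statement are immediate, and Theorem~\ref{admjoinse} furnishes the Sasaki--Einstein metric. Only the cohomology ring computation remains. The plan is to use the concrete free quotient
$$M^{2r+3}_{l_1,l_2,\bfw}=(S^{2r+1}\times S^3)/S^1,\quad e^{i\theta}\cdot(\bfu,z_1,z_2)=(e^{il_1\theta}\bfu,\,e^{-il_2w_1\theta}z_1,\,e^{-il_2w_2\theta}z_2),$$
and to analyze it through the Gysin sequence of the resulting principal circle bundle $\pi\colon S^{2r+1}\times S^3\to M^{2r+3}_{l_1,l_2,\bfw}$. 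The source ring $H^*(S^{2r+1}\times S^3)=\Lambda_\bbz[a,b]$ with $|a|=2r+1$, $|b|=3$ is supported only in degrees $0,3,2r+1,2r+4$, which makes the sequence almost mechanical.

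At the bottom, Gysin forces $H^0(M)=\bbz$, $H^1(M)=0$, and $\cup e\colon H^0(M)\to H^2(M)$ an isomorphism, identifying $H^2(M)=\bbz\cdot x$ with $x=e$ the Euler class. Iterating $\cup x$ upward, combined with an odd-degree vanishing induction in $[3,2r-1]$ that uses the vanishing of the source in those degrees, the remaining cohomology is determined from the single integer $n$ characterized by $\pi_*(b)=n\cdot x\in H^2(M)$: one obtains $H^{2k}(M)=(\bbz/n)\cdot x^k$ for $2\le k\le r$, $H^{2r+1}(M)=\bbz\cdot y$ (with $\pi^*y=n\cdot a$), $H^{2r+2}(M)=0$, and $H^{2r+3}(M)=\bbz\cdot(xy)$ from the final Gysin step, with all other groups vanishing. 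Of the four relations in the stated presentation, $y^2=0$ and $x^2y=0$ hold by degree ($4r+2$ and $2r+5$ both exceed $\dim M=2r+3$ for $r\ge 1$), $x^{r+1}=0$ follows from $H^{2r+2}(M)=0$, and $n\cdot x^2=0$ is immediate from $\pi_*(b)=nx$. A rank-and-torsion check by degree against Poincar\'e duality of the smooth oriented $(2r+3)$-manifold confirms the presentation is complete.

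The main obstacle is thus the identification $n=w_1w_2l_1^2$. I would compute it either through the Serre spectral sequence of the auxiliary smooth lens-space bundle $L(l_1;l_2w_1,l_2w_2)\hookrightarrow M^{2r+3}_{l_1,l_2,\bfw}\to \mathbb{CP}^r$ coming from the first-factor projection, or directly by the projection formula. In the spectral-sequence approach, the $d_2$ differential $E^{0,3}\to E^{2,2}$ is a unit mod $l_1$ (which kills the $H^2(L)$-torsion in the appropriate positions and produces a non-split extension $0\to\bbz\to H^2(M)\to \bbz/l_1\to 0$ giving $H^2(M)=\bbz$), while the $d_4$ transgression $E^{0,3}\to E^{4,0}$ is multiplication by $w_1w_2l_1$; these combine to give the torsion order $l_1\cdot w_1w_2l_1=w_1w_2l_1^2$ in $H^4(M)$. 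Equivalently, the same integer can be read from the Reeb orbifold quotient $M^{2r+3}_{l_1,l_2,\bfw}\to \mathbb{CP}^r\times \mathbb{CP}^1[\bfw]$ arising from the join: the transverse K\"ahler class $l_2[\omega_{\mathrm{FS}}]+l_1[\omega_\bfw]$ on the base, together with the orbifold Chern number $\int_{\mathbb{CP}^1[\bfw]}[\omega_\bfw]=1/(w_1w_2)$, contributes the factor $w_1w_2$ upon clearing denominators to express $\pi_*(b)$ as an integer multiple of $x$, and the Hopf multiplier $l_1$ enters squared through the interaction of the two factor bundles. Pinning down $n=w_1w_2l_1^2$ is the main technical work; with that value in hand the stated ring presentation follows from the Gysin analysis above.
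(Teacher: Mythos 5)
Your overall strategy---reduce everything to the single integer $n$ with $\pi_*(b)=nx$ via the Gysin sequence of the free circle action on $S^{2r+1}\times S^3$, then pin down $n$---is essentially the paper's: the Gysin sequence of $S^1\to S^{2r+1}\times S^3\to M^{2r+3}_{\bfw}$ carries the same information as the Leray--Serre spectral sequence of the Borel fibration $S^{2r+1}\times S^3_\bfw\to M^{2r+3}_{\bfw}\to \mathsf{B}S^1$ used to prove Theorem \ref{topcpr}, and your degree bookkeeping for the relations $x^{r+1}=x^2y=y^2=0$ is fine. The difficulty sits exactly where you flag ``the main technical work'': the identification $n=w_1w_2l_1^2$ is not established, and the ingredients you offer do not close the gap. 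First, your explicit circle action has $l_1$ and $l_2$ interchanged. Setting $\phi_i=0$ in Equation (\ref{Taction}), the action defining the join is $(u;z_1,z_2)\mapsto(e^{il_2\theta}u;e^{-il_1w_1\theta}z_1,e^{-il_1w_2\theta}z_2)$, so the fiber over $N=\bbc\bbp^r$ is the lens space $L(l_2;l_1w_1,l_1w_2)$, not $L(l_1;l_2w_1,l_2w_2)$; this is forced by the requirement that the quotient be the $S^1$-bundle with Euler class $l_1[\gro_N]+l_2[\gro_\bfw]$. If you run your own Gysin computation on the action as you wrote it, the two weights on the $S^3$ factor are $l_2w_1$ and $l_2w_2$, the relevant transgression is $w_1w_2l_2^2s^2$, and you land on $H^4\approx\bbz_{w_1w_2l_2^2}$---the wrong answer (indeed a different manifold, namely $M\star_{l_2,l_1}S^3_\bfw$).

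Second, neither of your sketches for computing $n$ is yet an argument. In the lens-space-bundle spectral sequence, the step ``a $d_2$ that is a unit mod $l_1$ and a $d_4$ equal to multiplication by $w_1w_2l_1$ combine to give torsion of order $l_1\cdot w_1w_2l_1$'' is not a legitimate operation: the $d_2$ information enters as a nonsplit extension $0\to\bbz\to H^2\to\bbz_{l_2}\to 0$, not as a factor you may multiply against a later differential, and as written it again uses the swapped conventions. The orbifold-quotient sketch (clearing the denominator $1/(w_1w_2)$ and letting ``the Hopf multiplier enter squared'') is a heuristic. What actually produces $w_1w_2l_1^2$ in the paper is: (i) the Mayer--Vietoris computation of $H^*(\mathsf{B}\bbc\bbp^1[\bfw],\bbz)$ in Lemma \ref{cporbcoh}, which shows that the integral transgression in $S^3_\bfw\to\mathsf{B}\bbc\bbp^1[\bfw]\to\mathsf{B}S^1$ is $d_4(\gra)=w_1w_2s^2$ (Lemma \ref{LS}); and (ii) naturality along $\psi^*s_2=-l_1s$ in Diagram (\ref{orbifibrationexactseq}), which converts this to $d_4(\gra)=w_1w_2l_1^2s^2$ in the fibration computing $H^*(M^{2r+3}_{\bfw})$. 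Some such integral computation for the weighted Hopf fibration is unavoidable; with the action corrected and that computation supplied, the rest of your outline goes through.
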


This theorem provides us with many examples of Sasaki-Einstein manifolds with isomorphic cohomology rings. We have 
\begin{corollary}\label{cor1}
Let $k$ denote the length of the prime decomposition of $w_1w_2$, then there are $2^{k-1}$ simply connected Sasaki-Einstein manifolds of dimension $2r+3$ determined by Theorem \ref{setop} with isomorphic cohomology rings such that $H^4$ has order $w_1w_2l_1^2$.
\end{corollary}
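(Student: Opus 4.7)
The plan is to combine an elementary count with the explicit ring presentation from Theorem \ref{setop}. Write $N:=w_1 w_2$ with prime factorization $N=p_1^{a_1}\cdots p_k^{a_k}$. A coprime ordered factorization $N=uv$ is determined by a choice, for each $i$, of whether $p_i^{a_i}$ divides $u$ or $v$, so there are $2^k$ ordered pairs. Since $k\ge 1$ forces $N\ge 2$, every coprime factorization has $u\ne v$, and fixing the convention $w_1>w_2$ yields exactly $2^{k-1}$ unordered coprime pairs $(w_1,w_2)$ with product $N$. For each such pair Theorem \ref{setop} furnishes a simply connected Sasaki-Einstein $(2r+3)$-manifold $M^{2r+3}_{l_1,l_2,\bfw}$ whose cohomology ring is $\bbz[x,y]/(w_1 w_2 l_1^2 x^2,\,x^{r+1},\,x^2 y,\,y^2)$, so in particular $H^4\cong \bbz/(w_1 w_2 l_1^2)$.

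Next I would observe that two such presentations (with the same $r$) are isomorphic as graded rings exactly when the single integer $w_1 w_2 l_1^2$ agrees, equivalently when the order of $H^4$ agrees. Since $w_1 w_2=N$ is common across all $2^{k-1}$ factorizations, what remains is to ensure that $l_1$ does not depend on the particular factorization. The explicit formula $l_1=(r+1)/\gcd(w_1+w_2,r+1)$ reduces this to constancy of $\gcd(w_1+w_2,r+1)$. The natural way to arrange this is to choose $r$ so that $r+1$ is coprime to every sum $w_1+w_2$ arising from the $2^{k-1}$ factorizations; for instance any prime $r+1>N+1$ works, since $w_1+w_2\le N+1$ for any coprime factorization with product $N$. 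With such $r$ one has $l_1=r+1$ uniformly, and all $2^{k-1}$ cohomology rings collapse to the single presentation $\bbz[x,y]/(N(r+1)^2 x^2,\,x^{r+1},\,x^2 y,\,y^2)$, whose $H^4$ has the asserted order $N(r+1)^2=w_1 w_2 l_1^2$.

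Simple connectivity of each $M^{2r+3}_{l_1,l_2,\bfw}$ is already built into Theorem \ref{setop}, being the expected consequence of forming the $S^3_\bfw$-join of the simply connected $S^{2r+1}$ with coprime parameters $(l_1,l_2)$. The main obstacle I anticipate is precisely the matching of $l_1$ across the $2^{k-1}$ factorizations: for a generic dimension the value of $\gcd(w_1+w_2,r+1)$ can differ between factorizations and produce inequivalent rings, so the corollary must be interpreted as asserting the existence of $2^{k-1}$ such Sasaki-Einstein manifolds for suitably chosen $r$ (which by Dirichlet exist in abundance), at which point the combinatorial count of coprime factorizations supplies the conclusion.
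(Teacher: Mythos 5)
Your count of the $2^{k-1}$ unordered coprime factorizations of $W=w_1w_2$ is exactly the paper's counting argument, and your reduction of ring isomorphism to the single invariant $|H^4|=w_1w_2l_1^2$ is also how the paper proceeds. But your proposal omits the half of the argument the paper actually supplies, namely Lemma \ref{H4lem}: if $H^4(M^{2r+3}_{\bfw})\cong H^4(M^{2r+3}_{\bfw'})$ then $w_1'w_2'=w_1w_2$ \emph{and} $l_1(\bfw')=l_1(\bfw)$. Without this (whose proof uses that $\gcd(|\bfw|,r+1)$ is coprime to $w_1w_2$), you only show that the $2^{k-1}$ factorizations of $W$ yield isomorphic rings; you do not rule out some $\bfw'$ with $w_1'w_2'\neq W$ accidentally satisfying $w_1'w_2'\,l_1(\bfw')^2=w_1w_2\,l_1^2$. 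Since the corollary asserts an exact count (the paper says the cardinality of $\calp_W$ \emph{is} $2^{k-1}$), this direction is needed and your argument is incomplete without it.

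On the other hand, the obstacle you flag is real, and it is one the paper's own proof silently steps over: Lemma \ref{H4lem} shows that equal $H^4$ forces equal $l_1$, not that equal $W$ forces equal $l_1$, and only the latter puts all $2^{k-1}$ manifolds into one isomorphism class. Constancy of $\gcd(w_1+w_2,r+1)$ over the coprime factorizations of $W$ does hold when every unit squares to $1$ modulo $r+1$ (since $w_1+w_2\equiv w_2^{-1}(W+w_2^2)$ at primes not dividing $W$); this covers $r+1=3$, i.e.\ the dimension-$7$ case the paper analyzes via the classes $\calp_W^0,\calp_W^1$. But it fails in general: for $r=8$ (so $r+1=9$) and $W=26$ one finds $l_1=1$ for $\bfw=(26,1)$ but $l_1=3$ for $\bfw=(13,2)$, so the two rings are not isomorphic and the asserted count $2^{k-1}=2$ fails. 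Your repair—choosing $r$ so that $r+1$ is coprime to every possible sum $w_1+w_2$—is arithmetically sound but proves a different statement (existence for suitably chosen $r$) from the one asserted, which is meant to hold for each fixed $r$. The faithful conclusion is that the corollary, as proved in the paper, is correct in the dimensions it is actually applied to (notably $r=2$), but in general the count must be read as at most $2^{k-1}$, with equality exactly when $l_1$ is constant on the coprime factorizations of $W$.
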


For the manifolds $M^{7}_{l_1,l_2,\bfw}$ of dimension 7 ($r=2$) much more is known about the topology. These are special cases of what are called generalized Witten spaces in \cite{Esc05}. In particular, the homotopy type was given in \cite{Kru97}, while the homeomorphism and diffeomorphism type was given in \cite{Esc05}. For our subclass admitting Sasaki-Einstein metrics we give necessary and sufficient conditions on $\bfw$ for homotopy equivalence when the order of $H^4$ is odd in Proposition \ref{homequivprop} below. Thus, we answer in the affirmative the existence of Einstein metrics on certain generalized Witten manifolds.

Aside from the $\bfw$-join with the 5-sphere, for dimension 7 our method produces Sasaki-Einstein manifolds $M_{k,\bfw}^7$ on lens space bundles over all del Pezzo surfaces $\bbc\bbp^2\# k\overline{\bbc\bbp}^2$ that admit a K\"ahler-Einstein metric. In particular 

\begin{theorem}\label{delPezzo}
For each relatively prime pair $(w_1,w_2)$ there exist Sasaki-Einstein metrics on the 7-manifolds $M^7_{k,\bfw}$ with cohomology ring
$$H^q(M^7_{k,\bfw},\bbz)\approx \begin{cases}
                    \bbz & \text{if $q=0,7$;} \\
                    \bbz^{k+1} & \text{if $q=2,5$;} \\
                    \bbz^k_{w_1+w_2}\times \bbz_{w_1w_2} & \text{if $q=4;$} \\
                     0 & \text{if otherwise}, 
                     \end{cases}$$
with the ring relations determined by $\gra_i\cup \gra_j=0, w_1w_2s^2=0,\break (w_1+w_2)\gra_i\cup
s=0,$ and $\gra_i,s$ are the $k+1$ two classes with $i=1,\cdots k$ where $k=3,\cdots,8$. Furthermore, when $4\leq k\leq 8$ the local moduli space of Sasaki-Einstein metrics has real dimension $4(k-4)$.
\end{theorem}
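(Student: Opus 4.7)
The plan is to specialize Theorem \ref{admjoinse} to a del Pezzo base, extract the cohomology ring via the Gysin-type algorithm that underlies Theorem \ref{setop}, and finally identify the local moduli space with the Kuranishi space of $N$.

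For each $3\leq k\leq 8$, the del Pezzo surface $N=\bbc\bbp^2\#k\overline{\bbc\bbp}^2$ admits a K\"ahler-Einstein metric by the classical results of Tian, Tian-Yau and Siu. Its anticanonical class $-K_N=3h-e_1-\cdots-e_k$ is primitive in $H^2(N,\bbz)$, so $\cali_N=1$. Taking $[\gro_N]=-K_N$ the formulas in Theorem \ref{admjoinse} give $l_1=1$ and $l_2=w_1+w_2$, and the theorem directly produces the Sasaki-Einstein structure on $M^7_{k,\bfw}=M\star_{1,w_1+w_2}S^3_\bfw$, where $M$ is the regular Sasaki circle bundle over $N$ with Euler class $-K_N$.

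For the cohomology I view $M^7_{k,\bfw}$ as a principal $S^1$-bundle over $N\times\bbc\bbp^1[\bfw]$, obtained by dividing $M\times S^3_\bfw$ by the combined weighted Reeb action, and apply the Gysin sequence together with the K\"unneth decomposition $H^*(N\times S^2)=H^*(N)\otimes H^*(S^2)$. The Euler class of this $S^1$-bundle is a specific combination of $(w_1+w_2)[\gro_N]$ and a $\bfw$-normalized version of the K\"ahler class of $\bbc\bbp^1[\bfw]$; cup product with it both kills the free part of $H^4(N\times S^2)$ and produces the torsion subgroups concentrated in degree four. I take as generators of $H^2(M^7_{k,\bfw})=\bbz^{k+1}$ a basis $\gra_1,\dots,\gra_k$ coming from a sublattice of $H^2(N)$ transverse to $-K_N$ together with the lift $s$ of the $\bbc\bbp^1[\bfw]$ K\"ahler class, and then read off the three ring relations from the Gysin edge map: the $(w_1+w_2)$ coefficient in the Euler class forces $(w_1+w_2)\gra_i\cup s=0$; the products $\gra_i\cup\gra_j$ land in the part of $H^4(N\times S^2)$ covered by the image of the Euler-class cup product, giving $\gra_i\cup\gra_j=0$; and the orbifold normalization of the $\bbc\bbp^1[\bfw]$ K\"ahler class produces $w_1w_2\,s^2=0$.

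Once the join data is fixed, the Sasaki-Einstein metric depends on no continuous parameter beyond the complex structure of the underlying del Pezzo, since $\bfw$ is discrete and the Sasaki-Einstein Reeb vector $\xi_\bfv$ in the $\bfw$-Sasaki cone is unique up to scale by Theorem \ref{admjoinse}. The Kuranishi space of $\bbc\bbp^2\#k\overline{\bbc\bbp}^2$ has complex dimension $2k-8$ for $4\leq k\leq 8$, and the open subset admitting a KE metric is of the same dimension, so the local moduli space of Sasaki-Einstein metrics has real dimension $4(k-4)$. I expect the main technical obstacle to be the careful bookkeeping in the Gysin sequence that isolates the precise torsion orders $w_1w_2$ and $w_1+w_2$: this requires normalizing the orbifold K\"ahler class of $\bbc\bbp^1[\bfw]$ consistently against the integer cohomology of the underlying topological $S^2$ and carrying that normalization through every cup product feeding a relation in degree four.
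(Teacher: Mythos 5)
Your overall strategy coincides with the paper's: specialize Theorem \ref{admjoinse} to $N_k=\bbc\bbp^2\#k\overline{\bbc\bbp}^2$ (where $\cali_{N_k}=1$ by Kobayashi--Ochiai, so $(l_1,l_2)=(1,|\bfw|)$), compute the cohomology by the Section 4 algorithm, and get the moduli dimension from the deformations of the del Pezzo surface. The existence and moduli parts are fine at the level of detail the paper itself supplies (one small overreach: Theorem \ref{admjoinse} asserts existence, not uniqueness, of the SE ray in the $\bfw$-cone, so the uniqueness you invoke needs a separate justification, though the count $4(k-4)$ is unaffected).

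The genuine gap is in the cohomology computation. The quotient map $M^7_{k,\bfw}\to N\times\bbc\bbp^1[\bfw]$ is \emph{not} a principal $S^1$-bundle over the underlying space $N\times S^2$: the Reeb action has isotropy $\bbz_{w_1}$ and $\bbz_{w_2}$ over the two orbifold points of $\bbc\bbp^1[\bfw]$. Consequently the ordinary Gysin sequence with base cohomology $H^*(N)\otimes H^*(S^2)$ does not apply, and no choice of "normalized" integral Euler class can rescue it. Concretely, since $H^3(N\times S^2)=0$, a Gysin sequence would force $H^4(M^7_{k,\bfw})\cong\mathrm{coker}\bigl(\cup\,e\colon H^2(N\times S^2)\to H^4(N\times S^2)\bigr)$ with $e=a(-K_N)+b\,\grs$, $\grs$ the generator of $H^2(S^2)$; a direct computation gives $|{\det}|=a^2b^k(9-k)$ for this map, and for $k=3$, $\bfw=(2,1)$ matching the order $54$ of $\bbz_3^3\times\bbz_2$ forces $|a|=3$, $|b|=1$, whence the cokernel is \emph{cyclic} of order $54$ --- the wrong isomorphism type. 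The torsion genuinely lives in orbifold cohomology: one must replace $S^2$ by Haefliger's classifying space $\mathsf{B}\bbc\bbp^1[\bfw]$, whose fourth cohomology is $\bbz_{w_1w_2}$ (Lemma \ref{cporbcoh}), and run the Leray--Serre spectral sequence of the top fibration of Diagram (\ref{orbifibrationexactseq}), as in the proof of Theorem \ref{Sk7man}. There the two families of relations come from differentials on \emph{different pages} --- $d_2(\grb_i)=|\bfw|\,\gra_i\otimes s$ from the circle bundle $\cals_k\to N_k$, and $d_4(\grg)=w_1w_2\,s^2$ from the orbifold Hopf fibration (Lemma \ref{LS}) --- and it is exactly this separation that yields the direct sum $\bbz_{|\bfw|}^k\times\bbz_{w_1w_2}$ rather than a cyclic extension. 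Your write-up gestures at an "orbifold normalization" but commits to the K\"unneth decomposition of $H^*(N\times S^2)$, which cannot produce this answer.
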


The paper is organized as follows. Section 2 gives a brief review of ruled manifolds that are the projectivization of a complex rank 2 vector bundle of the form $S=\bbp(\BOne\oplus L)$ over a K\"ahler-Einstein manifold $N$. These admit Hamiltonian 2-forms that give rise to the K\"ahler admissible construction that is necessary for our procedure. In the somewhat long Section 3 we describe our join construction, in particular, the join with the weighted 3-sphere, $S^3_\bfw$. We then discuss in detail the orbit structure of quasi-regular Reeb vector fields in the $\bfw$-Sasaki cone. Generally, the quotients appear as orbifold log pairs $(S,\grD)$ which fiber over $N$ with fiber an orbifold of the form $\bbc\bbp^1[v_1,v_2]/\bbz_m$, where $\grD$ is branch divisor, and $\bbc\bbp^1[v_1,v_2]$ is a weighted projective space. Moreover, we give conditions for the existence of a regular Reeb vector field, that is, when the quotient has a trivial orbifold structure. In Section 4 we discuss the topology of the joins, giving an algorithm for computing the integral cohomology ring. We work out the details in several cases which gives the proofs of Theorems \ref{setop}, \ref{delPezzo}, and Corollary \ref{cor1}. Finally, in Section 5 we present the details of the admissibility conditions that proves Theorem \ref{admjoinse}.

\begin{ack}
The authors would like to thank David Calderbank for discussions and Matthias Kreck for providing us with a proof that the first Pontrjagin class is a homeomorphism invariant.
\end{ack}

\section{Ruled Manifolds}
In this section we consider ruled manifolds of the following form. Let $(N,\gro_N)$ be a compact K\"ahler manifold with primitive integer K\"ahler class $[\gro_N]$, that is, a Hodge manifold. Consider a rank two complex vector bundle of the form $E=\BOne\oplus L$ where $L$ is a complex line bundle on $N$ and $\BOne$ denotes the trivial bundle. By a ruled manifold we shall mean the projectivization $S=\bbp(\BOne\oplus L)$. We can view $S$ as a compactification of the complex line bundle $L$ on $N$ by adding the `section at infinity'. For $x\in N$ we let $(u,v)$ denote a point of the fiber $E_x=\BOne\oplus L_x$. There is a natural action of $\bbc^*$ (hence, $S^1$) on $E$ given by $(c,z)\mapsto (c,\grl z)$ with $\grl\in \bbc^*$. The action $z\mapsto \grl z$ is a complex irreducible representation of $\bbc^*$ determined by the line bundle $L$. Such representations (characters) are labeled by the integers $\bbz$. Thus, we write $L=L_n$ for $n\in \bbz$ and refer to $n$ as the `degree' of $L$.

\subsection{A Construction of Ruled Manifolds}\label{ruledsec}
We now give a construction of such manifolds. Let $S^1\ra{1.6} M\ra{1.6} N$ be the circle bundle over $N$ determined by the class $[\gro_N]\in H^2(N,\bbz)$. We denote the $S^1$-action by $(x,u)\mapsto (x,e^{i\theta}u)$. Now represent $S^3\subset \bbc^2$ as $|z_1|^2+|z_2|^2=1$ and consider an $S^1$-action on $M\times S^3$ given by $(x,u;z_1,z_2)\mapsto (x,e^{i\theta}u;z_1,e^{in\theta}z_2)$. There is also the standard $S^1$-action on $S^3$ given by $(z_1,z_2)\mapsto (e^{i\chi}z_1,e^{i\chi}z_2)$ giving a $T^2$-action on $M\times S^3$ defined by
\begin{equation}\label{T2act}
(x,u;z_1,z_2)\mapsto (x,e^{i\theta}u;e^{i\chi}z_1,e^{i(\chi+n\theta)}z_2).
\end{equation}

\begin{lemma}\label{T2quot}
The quotient by the $T^2$-action of Equation (\ref{T2act}) is the projectivization $S_n=\bbp(\BOne\oplus L_n)$.
\end{lemma}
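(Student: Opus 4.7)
The plan is to quotient by the two circle factors of $T^2$ one at a time. Parametrize $T^2$ by $(\chi,\theta)$ as in \eqref{T2act} and note first that the two $S^1$-subgroups commute, so the order of quotienting is immaterial.

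First I would factor out the diagonal Hopf $S^1_\chi$-action: it acts trivially on $M$ and as $(z_1,z_2)\mapsto(e^{i\chi}z_1,e^{i\chi}z_2)$ on $S^3$, so
$$(M\times S^3)/S^1_\chi \;=\; M\times (S^3/S^1_\chi)\;=\;M\times\bbc\bbp^1,$$
where $\bbc\bbp^1=\bbp(\bbc\oplus\bbc)$ is obtained in homogeneous coordinates $[z_1:z_2]$.

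Next I would quotient by the remaining $S^1_\theta$. By \eqref{T2act} this $S^1$ acts on $M$ as the free principal bundle action $u\mapsto e^{i\theta}u$ and on $\bbc\bbp^1$ as $[z_1:z_2]\mapsto[z_1:e^{in\theta}z_2]$. The latter is precisely the action induced on $\bbp(\bbc\oplus\bbc)$ by the linear $S^1$-representation $\bbc\oplus\bbc_n$ on $\bbc^2$, where $\bbc_n$ denotes the weight-$n$ representation. Because the $S^1_\theta$-action on the $M$-factor is free, the quotient is the associated bundle
$$\bigl(M\times \bbp(\bbc\oplus\bbc_n)\bigr)/S^1_\theta \;=\; M\times_{S^1}\bbp(\bbc\oplus\bbc_n).$$

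Finally, I would use the elementary fact that projectivization commutes with the associated bundle construction. Since by definition of $L_n$ as the line bundle with weight-$n$ $S^1$-representation and of $\BOne$ as the trivial bundle,
$$M\times_{S^1}(\bbc\oplus\bbc_n)\;=\;\BOne\oplus L_n,$$
one concludes
$$(M\times S^3)/T^2\;=\;M\times_{S^1}\bbp(\bbc\oplus\bbc_n)\;=\;\bbp(\BOne\oplus L_n)\;=\;S_n,$$
as desired. The step that requires the most care is keeping track of how the weight on $z_2$ transfers through the Hopf quotient to give precisely the representation $\bbc_n$ (and hence the line bundle $L_n$) in the associated bundle, but this is a direct bookkeeping argument once the two $S^1$'s are disentangled.
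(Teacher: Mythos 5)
Your proof is correct. It reaches the same conclusion as the paper but by a slightly different decomposition: the paper's proof quotients by the whole $T^2$ at once, observes that the result fibers over $N$ via $\pi(x,[u;z_1,z_2])=x$ with fiber $S^3/S^1=\bbc\bbp^1$, and then identifies the bundle with $\bbp(\BOne\oplus L_n)$ by appealing to the fact that $n$ labels the irreducible $S^1$-representation defining $L_n$ (noting triviality iff $n=0$). You instead perform a staged quotient — first the diagonal Hopf circle $S^1_\chi$, giving $M\times\bbp(\bbc\oplus\bbc_n)$ with its residual $S^1_\theta$-action, then the free $S^1_\theta$-action, giving the associated bundle $M\times_{S^1}\bbp(\bbc\oplus\bbc_n)$ — and conclude via the compatibility of projectivization with the associated-bundle construction and the identification $M\times_{S^1}(\bbc\oplus\bbc_n)=\BOne\oplus L_n$. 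Your route is more explicit about why the resulting $\bbc\bbp^1$-bundle is precisely the stated projectivization (a point the paper's proof asserts rather tersely), at the cost of a little bookkeeping; the only hypotheses you use beyond the paper's are the triviality of the intersection $S^1_\chi\cap S^1_\theta$ in $T^2$, which justifies the staged quotient, and these are immediate from Equation (\ref{T2act}). Both arguments are sound.
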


\begin{proof}
First we see from (\ref{T2act}) that the action is free, so there is a natural bundle projection $(M\times S^3)/T^2\ra{1.6} N$ defined by $\pi(x,[u;z_1,z_2])=x$ where the bracket denotes the $T^2$ equivalence class. The fiber is $\pi^{-1}(x)=[u;z_1,z_2]$ which since $u$ parameterizes a circle is identified with $S^3/S^1=\bbc\bbp^1$. This bundle is trivial if and only if $n=0$ and $n$ labels the irreducible representation of $S^1$ on the line bundle $L_n$.
\end{proof}

We can take the line bundle $L_1$ to be any primitive line bundle in ${\rm Pic}(N)$. In particular, we are interested in the taking $L_1$ to be the line bundle associated to the primitive cohomology class $[\gro_N]\in H^2(N,\bbz)$. Then we have

\begin{lemma}\label{c1L}
The following relation holds: $c_1(L_n)=n[\gro_N]$.
\end{lemma}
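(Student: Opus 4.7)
The plan is to identify $L_n$ explicitly as a line bundle associated to the principal $S^1$-bundle $M\ra{1.6} N$ via the character of weight $n$, and then invoke the standard correspondence between $S^1$-characters and first Chern classes.

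First I would recall that by hypothesis the circle bundle $S^1\ra{1.6}M\ra{1.6}N$ is the one whose Euler class equals the primitive integral class $[\gro_N]\in H^2(N,\bbz)$, which by construction means $c_1(L_1)=[\gro_N]$, where $L_1=M\times_{\chi_1}\bbc$ is the line bundle associated to $M$ via the defining character $\chi_1(e^{i\theta})=e^{i\theta}$.

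Next I would identify the line bundle $L_n$ appearing in Lemma \ref{T2quot} with $M\times_{\chi_n}\bbc$, where $\chi_n(e^{i\theta})=e^{in\theta}$. To see this, consider the $T^2$-quotient in two stages. Quotienting $M\times S^3$ first by the diagonal $S^1$-action $\chi\cdot(z_1,z_2)=(e^{i\chi}z_1,e^{i\chi}z_2)$ (trivial on $u$) produces $M\times\bbc\bbp^1$. The residual $S^1$-action is $(u,[z_1:z_2])\mapsto(e^{i\theta}u,[z_1:e^{in\theta}z_2])$, whose fixed-point sections in the $\bbc\bbp^1$-factor sit at $[1:0]$ (the zero section) and $[0:1]$ (the section at infinity). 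Linearizing the action along the zero section, the affine coordinate $z_2/z_1$ transforms by $e^{in\theta}$, so the normal bundle of the zero section in the quotient $\bbp(\BOne\oplus L_n)$ is precisely the associated bundle $M\times_{\chi_n}\bbc$. Hence this associated bundle coincides with $L_n$.

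Finally, since characters of $S^1$ add under tensor product of the associated line bundles, we have $M\times_{\chi_n}\bbc\cong(M\times_{\chi_1}\bbc)^{\otimes n}=L_1^{\otimes n}$. Therefore
\[
c_1(L_n)=n\,c_1(L_1)=n[\gro_N],
\]
which is the desired identity. The only delicate point in this plan is the identification step in the second paragraph; everything else is formal, since once $L_n$ is recognized as the associated line bundle of $M\ra{1.6}N$ via the weight-$n$ character, the Chern class computation reduces to the standard Euler-class relation for $S^1$-bundles.
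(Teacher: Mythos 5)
Your proof is correct and follows essentially the same route as the paper: identify $L_n$ as the line bundle associated to the principal bundle $M\ra{1.6}N$ via the weight-$n$ character (which the paper reads off directly from Equation (\ref{T2act})), note that $c_1(L_1)=[\gro_N]$ by the definition of $M$ as the unit circle bundle of $L_1$, and conclude by additivity of characters under tensor product. Your version merely spells out the identification step (two-stage quotient and linearization along the zero section) that the paper leaves implicit.
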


\begin{proof}
Equation (\ref{T2act}) implies that the $S^1$-action on the line bundle $L_n$ is given by $z\mapsto e^{in\theta}z$. But we know that the definition of $M$ that it is the unit sphere in the line bundle over $N$ corresponding to $n=1$, and this corresponds to the class $[\gro_N]$, that is $c_1(L_1)=[\gro_N]$. Thus, $c_1(L_n)=n[\gro_N]$.
\end{proof}

\subsection{The Admissible Construction}\label{adm-basics}
Let us now suppose that $(N,\gro_N$) is K\"ahler-Einstein with K\"ahler metric $g_N$ and Ricci form 
$\rho_N = 2\pi\cali_N \omega_N$, where $\cali_N$ denotes the Fano index.
We will also assume that $n$ from Section \ref{ruledsec} is non-zero. Then 
$(\omega_{N_n},g_{N_n}): =(2n\pi \omega_N, 2n\pi g_N)$ satisfies that 
$( g_{N_n}, 
\omega_{N_n})$ or $(- g_{N_n}, 
-\omega_{N_n})$
is a K\"ahler structure (depending on the sign of $n$). 
In either case, we let $(\pm g_{N_n}, \pm \omega_{N_n})$ refer to the K\"ahler structure.
We denote the real dimension of $N$ by
$2 d_{N}$ and write the scalar curvature of $\pm g_{N_n}$ as $\pm 2 d_{N_n} s_{N_n}$.
[So, if e.g. $-g_{N_n}$ is a K\"ahler structure with positive scalar curvature, $s_{N_n}$ would be negative.] Since the (scale invariant) Ricci form is given by $\rho_N=s_{N_n}\omega_{N_n}$, it is easy to see that $s_{N_n}=\cali_N/n$.

Now Lemma \ref{c1L} implies that
$c_{1}(L_n)= [\omega_{N_n}/2\pi]$.
Then, following \cite{ACGT08}, the total space of the projectivization
$S_n=\bbp(\BOne\oplus L_n)$ is called {\it admissible}.

On these manifolds, a particular type of K\"ahler metric on $S_n$,  also called
{\it admissible}, can now be constructed \cite{ACGT08}. We shall describe this construction in Section \ref{KE} where we will use it to prove Theorem \ref{admjoinse}.

An admissible K\"ahler manifold is a special case of a K\"ahler manifold admitting a so-called Hamiltonian $2$-form.
Let $(S,J,\gro,g)$ be a K\"ahler manifold of real dimension $2n$. Recall \cite{ApCaGa06} that on $(S,J,\gro,g)$ a {\it Hamiltonian 2-form} is a $J$-invariant 2-form $\phi$ that satisfies the differential equation
\begin{equation}\label{ham2form}
2\nabla_X\phi = d\tr\phi\wedge (JX)^\flat-d^c\tr\phi\wedge X^\flat
\end{equation}
for any vector field $X$. Here $X^\flat$ indicates the 1-form dual to $X$, and $\tr\phi$ is the trace with respect to the K\"ahler form $\gro$, i.e. $\tr\phi=g(\phi,\gro)$ where $g$ is the K\"ahler metric.
Note that if $\phi$ is a Hamiltonian $2$-form, then so is $\phi_{a,b} = a\phi + b\omega$ for any constants $a,b \in \bbr$.

A Hamiltonian $2$-form $\phi$ induces an isometric hamiltonian $l$-torus action on $S$ for some $0\leq l \leq n$. 
To see this, we follow \cite{ApCaGa06} and use the K\"ahler form
 $\omega$ to identify $\phi$ with a Hermitian endomorphism.  Consider now the elementary symmetric functions $\grs_1,\ldots,\grs_n$ of its $n$ eigenvalues. The Hamiltonian vector fields $K_i=J{\rm grad}\grs_i$ are Killing with respect to the K\"ahler 
 metric $g$. Moreover the Poisson brackets $\{\sigma_i,\sigma_j\}$ all vanish and so, in particular the
 vector fields $K_1,...,K_n$ commute. In the case where $K_1,...,K_n$ are independent, we have that $(S,J,\gro,g)$ is toric. In fact, it is a very special kind of toric, namely {\em orthotoric} \cite{ApCaGa06}. In general, it is proved in  \cite{ApCaGa06} that there exists a number
 $0 \leq l \leq n$ such that  the span of $K_1,...,K_n$ is everywhere at most $l-dimensional$ and on an open dense set $S^0$
 $K_1,...,K_l$ are linearly independent.
 Now $l$ is called the {\it order} of $\phi$. We have $0\leq l \leq n$. 
 
The admissible metrics as described in section \ref{KE} admit a Hamiltonian $2$-form of order one (see Remark \ref{hamiltonian2form}).

\section{The Join Construction}
The join construction was first introduced in \cite{BG00a} for Sasaki-Einstein manifolds, and later generalized to any quasi-regular Sasakian manifolds in \cite{BGO06} (see also Section 7.6.2 of \cite{BG05}). However, as pointed out in \cite{BoTo11} it is actually a construction involving the orbifold Boothby-Wang construction \cite{BoWa,BG00a}, and so applies to quasi-regular strict contact structures. Although it is quite natural to do so, we do not need to fix the transverse (almost) complex structure. Moreover, in \cite{BoTo13} it was shown that in the special case of $S^3$-bundles over Riemann surfaces a twisted transverse complex structure on a regular Sasakian manifold can be realized by a product transverse complex structure on a certain quasi-regular Sasakian structure in the same Sasaki cone. We are interested in exploring just how far this analogy generalizes.

The general join construction proceeds as follows. Given compact quasi-regular contact manifolds $(M_i,\eta_i)$ where $\cals_i$ is a Sasakian structure with contact forms $\eta_i$ and $\dim M_i=2n_i+1$ together with a pair of relatively prime integers $(l_1,l_2)$, we construct new contact orbifolds $M_1\star_{l_1,l_2}M_2$ of dimension $2(n_1+n_2)+1$ as follows: the Reeb vector fields $\xi_i$ of $\eta_i$ generate a locally free circle action whose quotient $\calz_i$ is a symplectic orbifold with symplectic form $\gro_i$ satisfying $\pi_i^*\gro_i=d\eta_i$ where $\pi_i:M_i\ra{1.6} \calz_i$ is the natural quotient projection. The product orbifold $\calz_1\times \calz_2$ has symplectic structures $\gro_{l_1,l_2}=l_1\gro_1+l_2\gro_2$ where the forms $\gro_i$ define primitive cohomology classes in $H^2_{orb}(\calz_i,\bbz)$. This implies that the cohomology class $[\gro_{l_1,l_2}]$ is also primitive. The orbifold Boothby-Wang construction \cite{BG00a} then gives a contact structure with contact form $\eta_{l_1,l_2}$ on the total space $M_1\star_{l_1,l_2}M_2$ of the $S^1$-orbibundle over $\calz_1\times\calz_2$ which satisfies $d\eta_{l_1,l_2}=\pi^*\gro_{l_1,l_2}$ where $\pi:M_1\star_{l_1,l_2}M_2\ra{1.6} \calz_1\times\calz_2$ is the natural orbibundle projection. We also note that $\eta_{l_1,l_2}$ is a connection 1-form in this orbibundle. The orbifold $M_1\star_{l_1,l_2}M_2$ can also be given as the quotient space of a locally free circle action on the manifold $M_1\times M_2$. Let $\xi_i$ denote the Reeb vector field of the contact manifold $(M_i,\eta_i)$. We consider $\eta_{l_1,l_2}=l_1\eta_1+l_2\eta_2$ as a 1-form on the product $M_1\times M_2$. From the Reeb vector fields $\xi_i$ we construct the vector field  $L_{l_1,l_2}=\frac{1}{2l_1}\xi_1-\frac{1}{2l_2}\xi_2$ which induces a locally free circle action on $M_1\times M_2$. This action is free if $\gcd(l_2\upsilon_1,l_1\upsilon_2)=1$ in which case the quotient is a smooth manifold. It is called the {\it $(l_1,l_2)$-join} of $M_1$ and $M_2$ and denoted by $M_1\star_{l_1,l_2}M_2$. Here $\upsilon_i$ denotes the {\it order} of $M_i$, that is, the lcm of the orders of the isotropy groups of the locally free action.

So far nothing has been said about Sasakian or K\"ahlerian structures. However, given any symplectic structure, one can choose a compatible almost complex structure, giving an almost K\"ahler manifold. Similarly, given a quasi-regular contact structure one can choose a compatible transverse almost complex structure giving a K-contact manifold. Furthermore, the transverse almost complex structure on $M$ is the horizontal lift of the almost complex structure on $N$, and when these structures are integrable $M$ is Sasakian and $N$ is K\"ahlerian. We have the following easily verified facts about the Sasaki cone and the joins of Sasaki manifolds:

\begin{itemize}
\item The join of extremal Sasaki metrics gives an extremal Sasaki metric.
\item The join of CSC Sasaki metrics gives a CSC Sasaki metric.
\item Let $(M_i,\cals_i)$ be quasi-regular Sasakian manifolds with Sasaki cones $\grk_i$, respectively. Let $\grk_\star$ denote the Sasaki cone of the join $\cals_1\star_{l_1,l_2}\cals_2$. Then we have $\dim\grk_\star=\dim\grk_1+\dim\grk_2-1$.
\item If the dimension of the Sasaki cone $\grk(\cals)$ is greater than $1$ then $\cals$ is of positive or indefinite type.
\item If $\dim\gA\gu\gt(\cals)>1$, then $\dim\grk(\cals)>1$.
\end{itemize}

\subsection{The $S^3_\bfw$-Join}
Here we apply the $(l_1,l_2)$-join construction to the case at hand, namely, the $(l_1,l_2)$-join of a regular Sasaki manifold $M$ with the weighted 3-sphere $S^3_\bfw$. $M$ is a the total space of an $S^1$-bundle over a smooth projective algebraic variety $N$ with K\"ahler form $\gro_N$. The Reeb vector field $\xi_2$ of the weighted sphere $S^3_\bfw$ is given by $\xi_\bfw=\sum_{i=1}^2w_iH_i$ where $H_i$ is the vector field on $S^3$ induced by $y_i\partial_{x_i}-x_i\partial_{y_i}$ on $\bbr^4$, and $w_1,w_2$ are relatively prime positive integers.  In this case the smoothness condition becomes $\gcd(l_2,l_1w_1w_2)=1$ which, since $gcd(l_1,l_2)=1$, is equivalent to $\gcd(l_2,w_i)=1$ for $i=1,2$.

We consider the quotient $M\star_{l_1,l_2}S^3_\bfw$ of $M\times S^3$ by the $S^1$-action generated by the vector field $L_{l_1,l_2}$.
Moreover, the 1-form $\eta_{l_1,l_2}$ passes to the quotient and gives it a contact structure. The Reeb vector field of $\eta_{l_1,l_2}$ is the vector field
\begin{equation}\label{Reebjoin}
\xi_{l_1,l_2}=\frac{1}{2l_1}\xi_1+\frac{1}{2l_2}\xi_2.
\end{equation}
and we have the commutative diagram
\begin{equation}\label{s2comdia}
\begin{matrix}  M\times S^3 &&& \\
                          &\searrow\pi_L && \\
                          \decdnar{\pi_{12}} && M\star_{l_1,l_2}S^3_\bfw &\\
                          &\swarrow\pi && \\
                        N\times\bbc\bbp^1[\bfw] &&& 
\end{matrix}
\end{equation}
where the $\pi$s are the obvious projections, and $\bbc\bbp^1[\bfw]$ is the weighted projective space usually written as $\bbc\bbp(w_1,w_2)$.

\subsection{The Cohomological Einstein Condition}
Let us compute the first Chern class of our induced contact structure $\cald_{l_1,l_2,\bfw}$ on $M\star_{l_1,l_2}S^{3}_\bfw$. For this we compute the orbifold first Chern class of $N\times \bbc\bbp^1[\bfw]$, viz. 
\begin{equation}\label{c1N}
c_1^{orb}(N\times \bbc\bbp^1[\bfw])=c_1(N)+\frac{|\bfw|}{w_1w_2}PD(E)
\end{equation}
as an element of $H^2(N\times \bbc\bbp^1[\bfw],\bbq)\approx H^2(N,\bbq)\oplus H^2(\bbc\bbp^1[\bfw]),\bbq)$. Now the K\"ahler form on $N\times \bbc\bbp^1[\bfw]$ is $\gro_{l_1,l_2}=l_1\gro_N+ l_2\gro_\bfw$ where $\gro_\bfw$ is the standard K\"ahler form on $\bbc\bbp^1[\bfw]$ such that $p^*[\gro_\bfw]$ is a positive generator in $H^2_{orb}(\bbc\bbp^1[\bfw],\bbz)$ where $p:\mathsf{B}\bbc\bbp^1[\bfw]\ra{1.6} \bbc\bbp^1[\bfw]$ is the natural orbifold classifying projection, that is, $\mathsf{B}\bbc\bbp^1[\bfw]$ is the classfying space associated to any groupoid representing the orbifold $ \bbc\bbp^1[\bfw]$ \cite{Hae84}. Pulling $\gro_{l_1,l_2}$ back to the join $M_{l_1,l_2,\bfw}= M\star_{l_1,l_2}S^{3}_\bfw$ we have $\pi^*\gro_{l_1,l_2}=d\eta_{l_1,l_2,\bfw}$ implying that $l_1\pi^*[\gro_N]+l_2\pi^*[\gro_\bfw]=0$ in $H^2(M_{l_1,l_2,\bfw},\bbz)$. So letting $\gI$ denote the ideal generated by $l_1\pi^*[\gro_N]+l_2\pi^*[\gro_\bfw]$ we have
\begin{equation}\label{c1cald}
c_1(\cald_{l_1,l_2,\bfw})=\bigl(\pi^*c_1(N)+|\bfw|\pi^*[\gro_\bfw]\bigr)/\gI.
\end{equation}

Now we assume that $N$ is K\"ahler-Einstein with K\"ahler form $\gro_N$ which for convenience we assume to define a primitive class $[\gro_N]\in H^2(N,\bbz)$. We also assume that $(N,\gro_N)$ is monotone, that is, that $c_1(N)=\cali_N[\gro_N]$ where $\cali_N$ is the Fano index of $N$. So Equation (\ref{c1cald}) becomes
\begin{equation}\label{c1cald2}
c_1(\cald_{l_1,l_2,\bfw})=\bigl(\cali_N\pi^*[\gro_N]+|\bfw|\pi^*[\gro_\bfw]\bigr)/\gI.
\end{equation}
The cohomological Einstein condition is $c_1(\cald_{l_1,l_2,\bfw})=0$ or equivalently that $\bigl(\cali_N\pi^*[\gro_N]+|\bfw|\pi^*[\gro_\bfw]\bigr)$ lies in the ideal $\gI$. This implies the condition $l_2\cali_N=|\bfw|l_{1}$. We have arrived at:

\begin{lemma}\label{c10}
Necessary conditions for the Sasaki manifold $M_{l_1,l_2,\bfw}$ to admit a Sasaki-Einstein metric is that $\cali_N>0$, and that  
$$l_2=\frac{|\bfw|}{\gcd(|\bfw|,\cali_N)},\qquad l_{1}=\frac{\cali_N}{\gcd(|\bfw|,\cali_N)}.$$
\end{lemma}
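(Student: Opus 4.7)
The plan is to start from the expression already derived in (\ref{c1cald2}),
\begin{equation*}
c_1(\cald_{l_1,l_2,\bfw})=\bigl(\cali_N\pi^*[\gro_N]+|\bfw|\pi^*[\gro_\bfw]\bigr)/\gI,
\end{equation*}
and to unpack what vanishing of this class says arithmetically about $l_1$, $l_2$, $\bfw$, and $\cali_N$. First I would recall that a Sasaki-Einstein metric has transverse Ricci form a positive multiple of $d\eta_{l_1,l_2,\bfw}$, so the basic first Chern class of the transverse holomorphic structure, and hence $c_1(\cald_{l_1,l_2,\bfw})$ in ordinary integral cohomology, must vanish. This gives the cohomological necessary condition that will drive everything.

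The next step is to translate the vanishing condition into a divisibility relation. The ideal $\gI$ is generated by the single class $l_1\pi^*[\gro_N]+l_2\pi^*[\gro_\bfw]$, which is exactly the pullback of the Euler class of the orbifold circle bundle $\pi:M_{l_1,l_2,\bfw}\to N\times\bbc\bbp^1[\bfw]$ produced by the Boothby-Wang construction. The Gysin sequence of this bundle identifies $\ker\pi^*$ rationally in $H^2$ with the line spanned by $l_1[\gro_N]+l_2[\gro_\bfw]$, and since $[\gro_N]$ and $[\gro_\bfw]$ live in the second cohomologies of distinct factors of the product they are linearly independent there. Consequently the condition $\cali_N\pi^*[\gro_N]+|\bfw|\pi^*[\gro_\bfw]\in\gI$ reduces to the single proportion
\begin{equation*}
(\cali_N,|\bfw|)=k\,(l_1,l_2)
\end{equation*}
for some nonzero rational $k$, i.e.\ $l_2\cali_N=l_1|\bfw|$.

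To finish I would invoke the coprimality hypothesis $\gcd(l_1,l_2)=1$. Setting $d=\gcd(|\bfw|,\cali_N)$, the displayed proportion has the unique solution $l_1=\cali_N/d$ and $l_2=|\bfw|/d$ in coprime integers, and the requirement that $l_1$ be a \emph{positive} integer then forces $\cali_N>0$, yielding both assertions. The main (essentially only) piece of genuine content beyond the computation already on the page is the identification of $\ker\pi^*$ as the line generated by the Euler class; once that standard Gysin input is granted, the remainder of the argument is pure linear arithmetic, and I expect no substantive obstacle.
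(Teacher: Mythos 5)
Your proposal is correct and follows essentially the same route as the paper: impose $c_1(\cald_{l_1,l_2,\bfw})=0$, deduce the proportionality $l_2\cali_N=l_1|\bfw|$ from membership in the ideal $\gI$, and then use $\gcd(l_1,l_2)=1$ to pin down $l_1,l_2$ and force $\cali_N>0$. The only difference is that you explicitly justify the middle step via the Gysin sequence (identifying $\ker\pi^*$ with the line spanned by $l_1[\gro_N]+l_2[\gro_\bfw]$), which the paper leaves implicit.
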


\begin{remark}\label{relindex}
The integers $l_1,l_2$ in Lemma \ref{c10} were called {\it relative Fano indices} in \cite{BG00a}. For the remainder of the paper we assume that these integers take the values given by Lemma \ref{c10}.
\end{remark}

\subsection{The Tori Actions}
Consider the action of the $3$-dimensional torus $T^{3}$ on the product $M\times S^{3}_\bfw$ defined by
\begin{equation}\label{Taction}
(x,u;z_1,z_2)\mapsto (x,e^{il_2\theta}u;e^{i(\phi_1-l_1w_1\theta)}z_1,e^{i(\phi_2-l_1w_2\theta)}z_2).
\end{equation}
The Lie algebra $\gt_{3}$ of $T^{3}$ is generated by the vector fields $L_{l_1,l_2,\bfw},H_1,H_2$. 

Our join manifold $M_{l_1,l_2,\bfw}=M\star_{l_1,l_2}S^{3}_\bfw$ is the quotient by the circle subgroup $S^1_\theta$ obtained by setting $\phi_i=0$ for $i=1,2$ in Equation (\ref{Taction}). We can realize the quotient procedure in two stages. First, divide by the cyclic subgroup $\bbz_{l_2}\subset S^1_\theta$ to get $M\times L(l_2;l_1\bfw)$ and then divide by the quotient $S^1_\theta/\bbz_{l_2}$ to realize $M_{l_1,l_2,\bfw}$ as an associated fiber bundle to the principal $S^1$-bundle over $N$ with fiber $L(l_2;l_1\bfw)$. Here $L(l_2;l_1\bfw)$ is the lens space $L(l_2;l_1w_1,l_1w_2)$.

The infinitesimal generator of the $S^1_\theta$ action is given by
\begin{equation}\label{infq+2action}
L_{l_1,l_2}=\frac{1}{2l_1}\xi_1-\sum_{j=1}^{2}\frac{1}{2l_2}w_jH_j,
\end{equation}
and we denote its Lie subalgebra by $\{L_{l_1,l_2},\bfw\}$ to indicate its dependence on the weight vector $\bfw$. We have an exact sequence of Abelian Lie algebras
\begin{equation}\label{Liealgexact}
0\ra{2.5}\{L_{l_1,l_2,\bfw}\}\ra{2.5} \gt_{3}\ra{2.5} \gt_{2}\ra{2.5} 0.
\end{equation}
The quotient algebra $\gt_{2}$ is generated by the $H_1,H_2$. So as in \cite{BoTo13} the Sasaki cone $\gt_{2}^+$ on $M_{l_1,l_2,\bfw}$ is inherited by the Sasaki cone on $S^{3}$. We refer to this 2-dimensional cone as the {\it $\bfw$-Sasaki cone} or just {\it $\bfw$-cone}. We remark that $\gt_{2}^+$ may not be the full Sasaki cone, since the Sasakian structure on $M$ may have a Sasaki automorphism group whose dimension is greater than one. However, here we shall only work with the restricted Sasaki cone $\gt_{2}^+$.

Next we consider the 2-torus action generated by setting $\phi_i=v_i\phi$ in Equation (\ref{Taction}) where for now $v_1,v_2$ are relatively prime positive integers. As in the proof of Proposition 3.8 of \cite{BoTo13} we get a commutative diagram
\begin{equation}\label{comdia1}
\begin{matrix}  M\times S^{3}_\bfw &&& \\
                          &\searrow && \\
                          \decdnar{\pi_B} && M_{l_1,l_2,\bfw} &\\
                          &\swarrow && \\
                          B_{l_1,l_2,\bfv,\bfw} &&& 
\end{matrix}
\end{equation}
where $B_{l_1,l_2,\bfv,\bfw}$ is a bundle over $N$ with fiber a weighted projective space, and $\pi_B$ denotes the quotient projection by $T^2$. The Lie algebra of this $T^2$ is generated by $L_{l_1,l_2,\bfw},\sum_jv_jH_j$. 

The $T^2$ action describing the $\pi_B$ quotient of Diagram (\ref{comdia1}) is given in this case, as in Equation (\ref{Taction}), by
\begin{equation}\label{T2action2}
(x,u;z_1,z_2)\mapsto (x,e^{i\frac{|\bfw|}{\gcd(|\bfw|,\cali_N)}\theta}u;e^{i(v_1\phi-\frac{\cali_N}{\gcd(|\bfw|,\cali_N)}w_1\theta)}z_1,e^{i(v_2\phi-\frac{\cali_N}{\gcd(|\bfw|,\cali_N)}w_2\theta)}z_2).
\end{equation}
First divide by the finite subgroup $\bbz_{l_2}$ of $S^1_\theta$ giving $M\times L(l_2;l_1w_1,l_1w_2)$, where $L(l_2;l_1w_1,l_1w_2)$ is the lens space. Then the residual $S^1_\theta/\bbz_{l_2}\approx S^1$ action is 
\begin{equation}\label{resS1act}
(x,u;z_1,z_2)\mapsto (x,e^{i\theta}u;[e^{-i\frac{l_1w_1}{l_2}\theta)}z_1,e^{-i\frac{l_1w_2}{l_2}\theta)}z_2]).
\end{equation}
This describes 
$$M_{l_1,l_2,\bfw}=M\times_{S^1}L(l_2;l_1w_1,l_1w_2)$$ 
as an associated fiber bundle to the principal $S^1$-bundle $M\ra{1.5} N$ with fiber $L(l_2;l_1w_1,l_1w_2)$ over the K\"ahler manifold $N$. The brackets in Equation (\ref{resS1act}) denote the equivalence class defined by $(z'_1,z'_2)\sim (z_1,z_2)$ if $(z'_1,z'_2)=(\grl^{l_1w_1}z_1,\grl^{l_1w_2}z_2)$ for $\grl^{l_2}=1$.

We now turn to the $T^2$ action of $S^1_\phi\times (S^1_\theta/\bbz_{l_2})$ on $M\times L(l_2;l_1w_1,l_1w_2)$ given by
\begin{equation}\label{T2action3}
(x,u;z_1,z_2)\mapsto (x,e^{i\theta}u;[e^{i(v_1\phi-\frac{l_1w_1}{l_2}\theta)}z_1,e^{i(v_2\phi-\frac{l_1w_2}{l_2}\theta)}z_2]),
\end{equation}

 This gives rise to the Diagram (\ref{comdia1})
\begin{equation}\label{comdia2}
\begin{matrix}  M\times L(l_2;l_1w_1,l_1w_2) &&& \\
                          &\searrow && \\
                          \decdnar{\pi_B} && M_{l_1,l_2,\bfw} &\\
                          &\swarrow && \\
                          B_{l_1,l_2,\bfv,\bfw} &&& 
\end{matrix}
\end{equation}

Let us analyze the behavior of the $T^2$ action given by Equation (\ref{T2action3}). We shall see that it it not generally effective. First we notice that the $S^1_\theta$ action is free since it is free on the first factor. Next we look for fixed points under a subgroup of the circle $S^1_\phi$. Thus, we impose 
$$(e^{iv_1\phi}z_1,e^{iv_2\phi}z_2)=(e^{-2\pi\frac{\cali_Nw_1}{|\bfw|}ri}z_1,e^{-2\pi\frac{\cali_Nw_2}{|\bfw|}ri}z_2)$$
for some $r=0,\ldots,\frac{|\bfw|}{\gcd(|\bfw|,\cali_N)}-1$. If $z_1z_2\neq 0$ we must have
\begin{equation}\label{phisoln}
v_1\phi=2\pi(-\frac{\cali_Nw_1r}{|\bfw|} +k_1),\qquad v_2\phi=2\pi(-\frac{\cali_Nw_2r}{|\bfw|} +k_2)
\end{equation}
for some integers $k_1,k_2$ which in turn implies
$$\cali_Nr(w_2v_1-w_1v_2)=|\bfw|(k_2v_1-k_1v_2).$$
This gives
\begin{equation}\label{reqn}
r=\frac{|\bfw|}{\cali_N}\frac{k_2v_1-k_1v_2}{w_2v_1-w_1v_2}
\end{equation}
which must be a nonnegative integer less than $\frac{|\bfw|}{\gcd(|\bfw|,\cali_N)}$. 
We can also solve Equations (\ref{phisoln}) for $\phi$ by eliminating $\frac{\cali_Nr}{|\bfw|}$ giving
\begin{equation}\label{phisoln2}
\phi =2\pi \frac{k_1w_2-k_2w_1}{w_2v_1-w_1v_2}.
\end{equation}
Next we write (\ref{reqn}) as
\begin{equation}\label{reqn3}
r=\Bigl(\frac{\frac{|\bfw|}{\gcd(|\bfw|,\cali_N)}}{\gcd(|w_2v_1-w_1v_2|,\frac{|\bfw|}{\gcd(|\bfw|,\cali_N)})}\Bigr)\Bigl(\frac{k_2v_1-k_1v_2}{\frac{\cali_N}{\gcd(|\bfw|,\cali_N)}\frac{w_2v_1-w_1v_2}{\gcd(|w_2v_1-w_1v_2|,\frac{|\bfw|}{\gcd(|\bfw|,\cali_N)})}}\Bigr)
\end{equation}
Since $v_1$ and $v_2$ are relatively prime, we can choose $k_1$ and $k_2$ so that the term in the last parentheses is $1$. This determines $r$ as 
\begin{equation}\label{reqn4}
r=\frac{\frac{|\bfw|}{\gcd(|\bfw|,\cali_N)}}{\gcd(|w_2v_1-w_1v_2|,\frac{|\bfw|}{\gcd(|\bfw|,\cali_N)})}
\end{equation}

Now suppose that $z_2=0$. Then generally we have $e^{iv_1\phi}=e^{-2\pi\frac{\cali_Nw_1}{|\bfw|}ri}$ for some $r=0,\ldots,\frac{|\bfw|}{\gcd(|\bfw|,\cali_N)}-1$ or equivalently $r=1,\ldots,\frac{|\bfw|}{\gcd(|\bfw|,\cali_N)}$. This gives 
\begin{equation}\label{z20}
\phi=2\pi(-\frac{\cali_Nw_1r}{v_1|\bfw|}+\frac{k}{v_1}).
\end{equation} 
A similar computation at $z_1=0$ gives 
\begin{equation}\label{z10}
\phi=2\pi(-\frac{\cali_Nw_2r'}{v_2|\bfw|}+\frac{k'}{v_2}).
\end{equation}
We are interested in when regularity can occur. For this we need the minimal angle at the two endpoints to be equal. This gives
$$-\frac{\cali_Nw_2r'}{v_2|\bfw|}+\frac{k'}{v_2}=-\frac{\cali_Nw_1r}{v_1|\bfw|}+\frac{k}{v_1}$$
for some choice of integers $k,k'$ and nonnegative integers $r,r'<\frac{|\bfw|}{\gcd(|\bfw|,\cali_N)}$. This gives
\begin{equation}\label{endpteqn}
\frac{-\cali_Nw_2r'+k'|\bfw|}{v_2}=\frac{-\cali_Nw_1r+k|\bfw|}{v_1}.
\end{equation}

\subsection{Periods of Reeb Orbits}
We assume that $\bfw\neq (1,1)$. We want to determine the periods of the orbits of the flow of the Reeb vector field defined by the weight vector $\bfv=(v_1,v_2)$. In particular, we want to know when there is a regular Reeb vector field in the $\bfw$-Sasaki cone.

Let us now generally determine the minimal angle, hence the generic period of the Reeb orbits, on the dense open subset $Z$ defined by $z_1z_2\neq 0$. For convenience we set
$$ s=\gcd(|w_2v_1-w_1v_2|,\frac{|\bfw|}{\gcd(|\bfw|,\cali_N)}), \qquad t=\frac{\cali_N}{\gcd(|\bfw|,\cali_N)}.$$

\begin{lemma}\label{generic3}
The minimal angle on $Z$ is $\frac{2\pi}{s}$.  Thus,  $S^1_\phi/\bbz_s$ acts freely on the dense open subset $Z$. 
\end{lemma}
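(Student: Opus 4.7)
The plan is to compute the $S^1_\phi$-isotropy of a generic point of $Z$ under the action (\ref{T2action3}) and identify it with $\bbz_s$. Write $n = l_2 = |\bfw|/\gcd(|\bfw|,\cali_N)$ and $m = l_1 = \cali_N/\gcd(|\bfw|,\cali_N)$, so that $\gcd(m,n) = 1$. A point with $z_1 z_2 \neq 0$ is fixed by $e^{i\phi}$ precisely when there exist $r \in \{0,1,\dots,n-1\}$ and $k_1, k_2 \in \bbz$ satisfying (\ref{phisoln}). The three short steps are: count the admissible values of $r$, show each one yields a unique $\phi \in S^1$, and then observe that the resulting set of $\phi$'s is a subgroup.

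First, I would eliminate $\phi$ from (\ref{phisoln}) by the same combination giving (\ref{reqn}) to obtain $m r(w_2 v_1 - w_1 v_2) \equiv 0 \pmod n$. Since $\gcd(m,n) = 1$, this forces $(n/s) \mid r$, so the admissible values of $r$ are the $s$ multiples $r_i = (n/s)\,i$ with $i = 0,\dots,s-1$. For each such $r_i$ the pair of congruences is consistent, and $\gcd(v_1,v_2) = 1$ together with a Bezout argument then yields a unique joint solution $\phi_i \in S^1_\phi$. Distinct $r_i$'s give distinct $\phi_i$'s: an equality $\phi_i = \phi_j$ would imply $e^{2\pi i m w_k (r_i - r_j)/n} = 1$ for $k = 1,2$, and combining $\gcd(m,n) = 1$ with $\gcd(w_1, w_2) = 1$ forces $n \mid (r_i - r_j)$, hence $r_i = r_j$.

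Finally, the set $\Sigma = \{\phi_0,\dots,\phi_{s-1}\}$ is the generic isotropy of $S^1_\phi$ on $Z$, and it is closed under addition modulo $2\pi$: summing two admissible pairs $(\phi_i, r_i)$ and $(\phi_j, r_j)$ produces an admissible pair at $r_i + r_j \bmod n$, which is again a multiple of $n/s$. Hence $\Sigma$ is a subgroup of $S^1$ of order $s$, and the unique such subgroup is $\bbz_s$, generated by $2\pi/s$. This yields both statements of the lemma. The principal subtlety is tracking the two moduli simultaneously — the $\bbz_n$-equivalence on the lens-space factor and the $\bfv$-weighted circle action — which is exactly what is needed for the existence-and-uniqueness of the $\phi_i$; once that is in place, identifying $\Sigma$ with $\bbz_s$ is forced by the order count and cyclicity of finite subgroups of $S^1$.
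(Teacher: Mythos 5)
Your proof is correct, and it finishes the lemma by a somewhat different route than the paper's. Both arguments share the same starting point: the fixed-point equations (\ref{phisoln}) and the divisibility constraint that forces $r$ to be a multiple of $l_2/s$ (your derivation via $\gcd(l_1,l_2)=1$ is exactly the content of (\ref{reqn})--(\ref{reqn3})). From there the paper takes the single value $r=l_2/s$, solves $(sk_2-tw_2)v_1=(sk_1-tw_1)v_2$ using $\gcd(v_1,v_2)=1$ to get $sk_i=tw_i+mv_i$, and substitutes into (\ref{phisoln2}) to read off $\phi=2\pi m/s$ directly. You instead enumerate the whole isotropy group: each admissible $r_i=(l_2/s)\,i$ yields exactly one $\phi_i$ (Bezout for existence, $\gcd(v_1,v_2)=1$ for uniqueness), distinct $r_i$ give distinct $\phi_i$, so the generic isotropy is a subgroup of $S^1_\phi$ of order $s$, hence $\bbz_s$ generated by $2\pi/s$. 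Your version buys a bit more at the final step: the paper's computation exhibits one isotropy element as a multiple of $2\pi/s$ and asserts the minimal angle from that, whereas your order count together with the cyclicity of finite subgroups of $S^1$ delivers simultaneously that every isotropy angle is a multiple of $2\pi/s$, that $2\pi/s$ is attained, and that $S^1_\phi/\bbz_s$ acts freely on $Z$. The only caution is notational: your $m,n$ denote $l_1,l_2$, which collide with the paper's use of $m$ for the ramification index and $n$ for the degree of $L_n$ in the same section, so they should be renamed before splicing in.
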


\begin{proof}
We choose $k_1,k_2$ in Equation (\ref{reqn3}) so that the last parentheses equals $1$. This gives
$$t\frac{w_2v_1-w_1v_2}{s}=k_2v_1-k_1v_2.$$
Rearranging this becomes
$$(sk_2-tw_2)v_1=(sk_1-tw_1)v_2.$$
Since $v_1$ and $v_2$ are relatively prime this equation implies $sk_i=tw_i+mv_i$ for $i=1,2$ and some integer $m$. Putting this into Equation (\ref{phisoln2}) gives $\phi=\frac{2\pi m}{s}$, so the minimal angle is $\frac{2\pi}{s}$.
\end{proof}

We now investigate the endpoints defined by $z_2=0$ and $z_1=0$.

\begin{proposition}\label{regularprop}
The following hold:
\begin{enumerate}
\item The period on $Z$, namely $\frac{2\pi}{s}$, is an integral multiple of the periods at the endpoints. Hence, $S^1_\phi/\bbz_s$ acts effectively on $M_{l_1,l_2,\bfw}$.
\item The period at the endpoint $z_j=0$ is $2\pi\frac{\gcd(\cali_N, |\bfw|)}{v_i|\bfw|}$ where $i\equiv j+1\mod 2$. So the end points have equal periods if and only if $\bfv=(1,1)$. 
\item The $\bfw$-Sasaki cone contains a regular Reeb vector field if and only if  $\bfv=(1,1)$ and $\frac{w_1+w_2}{\gcd(\cali_N,w_1+w_2)}$ divides $w_1-w_2$.
\end{enumerate}
\end{proposition}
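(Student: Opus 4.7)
My plan is to work through the three assertions, each flowing from equations (\ref{z20})--(\ref{z10}) together with Lemma \ref{generic3} via routine gcd bookkeeping. For logical efficiency I'll prove (2) first, since its formulas feed directly into (1) and (3). The crucial combinatorial identity used throughout is $\gcd(|\bfw|,w_i)=\gcd(w_1+w_2,w_i)=1$, which follows from $\gcd(w_1,w_2)=1$ and lets one collapse $\gcd(|\bfw|,\cali_N w_i)$ to $\gcd(|\bfw|,\cali_N)$.

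\medskip

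\noindent\textbf{Part (2).} Equation (\ref{z20}) writes $\phi$ at $z_2=0$ as $\frac{2\pi}{v_1|\bfw|}(k|\bfw|-\cali_N w_1 r)$ with $k,r\in\bbz$; hence the minimal positive value is $\frac{2\pi\gcd(|\bfw|,\cali_N w_1)}{v_1|\bfw|}$, which by the collapsing identity equals $\frac{2\pi\gcd(\cali_N,|\bfw|)}{v_1|\bfw|}$. The symmetric calculation from (\ref{z10}) yields $\frac{2\pi\gcd(\cali_N,|\bfw|)}{v_2|\bfw|}$ at $z_1=0$. Equality of the two endpoint periods forces $v_1=v_2$, and since $\gcd(v_1,v_2)=1$ this means $\bfv=(1,1)$.

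\medskip

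\noindent\textbf{Part (1).} Divide the generic period of Lemma \ref{generic3} by the endpoint period just computed:
$$\frac{2\pi/s}{2\pi\gcd(\cali_N,|\bfw|)/(v_i|\bfw|)}=\frac{v_i}{s}\cdot\frac{|\bfw|}{\gcd(\cali_N,|\bfw|)}.$$
By its very definition $s=\gcd\!\bigl(|w_2v_1-w_1v_2|,\tfrac{|\bfw|}{\gcd(|\bfw|,\cali_N)}\bigr)$ divides $\tfrac{|\bfw|}{\gcd(|\bfw|,\cali_N)}$, so the ratio is a positive integer. Effectivity of $S^1_\phi/\bbz_s$ on all of $M_{l_1,l_2,\bfw}$ then follows, since $\bbz_s$ acts trivially on $Z$ by Lemma \ref{generic3} and on each endpoint locus embeds into the larger cyclic isotropy group identified in (2).

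\medskip

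\noindent\textbf{Part (3).} Regularity of $\xi_\bfv$ requires all $S^1_\phi$-orbits in $M_{l_1,l_2,\bfw}$ to share a single period. By (2), equality of the two endpoint periods already forces $\bfv=(1,1)$; then $|w_2v_1-w_1v_2|=w_1-w_2$ and the common endpoint period becomes $\frac{2\pi\gcd(\cali_N,|\bfw|)}{|\bfw|}$. Matching it with the generic period $\frac{2\pi}{s}$ amounts to $s=\tfrac{|\bfw|}{\gcd(\cali_N,|\bfw|)}$, which via the formula $s=\gcd\!\bigl(w_1-w_2,\tfrac{|\bfw|}{\gcd(\cali_N,|\bfw|)}\bigr)$ is exactly the divisibility $\tfrac{w_1+w_2}{\gcd(\cali_N,w_1+w_2)}\mid w_1-w_2$. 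Conversely, this divisibility forces a single common period and makes the induced $S^1$-action free, yielding regularity. The only mild obstacle I anticipate is the consistent bookkeeping of nested gcds and the careful logical reversal in (3); no geometric input beyond Sections 2 and 3 is needed.
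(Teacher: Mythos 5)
Your proposal is correct and follows essentially the same route as the paper: compute the minimal endpoint angles from Equations (\ref{z20})--(\ref{z10}) using $\gcd(|\bfw|,w_i)=1$ to reduce $\gcd(|\bfw|,\cali_N w_i)$ to $\gcd(|\bfw|,\cali_N)$, compare with the generic period $\tfrac{2\pi}{s}$ from Lemma \ref{generic3}, and read off (1)--(3); reordering (2) before (1) and (3) is a cosmetic change. The only difference is presentational: you state the gcd collapse directly, whereas the paper phrases it as $\gcd\bigl(\tfrac{|\bfw|}{\gcd(\cali_N,|\bfw|)},\tfrac{\cali_N w_1}{\gcd(\cali_N,|\bfw|)}\bigr)=1$ before choosing $k,r$.
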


\begin{proof}
A Reeb vector field will be regular if and only if the period of its orbit is the same at all points. We know that it is $\frac{2\pi}{s}$ on $Z$. We need to determine the minimal angle at the endpoints. From Equation (\ref{z20}) the angle at $z_2=0$ is
$$\phi=2\pi(\frac{-\cali_Nw_1r+k|\bfw|}{v_1|\bfw|})=2\pi\gcd(\cali_N,|\bfw|)\Bigl(\frac{\frac{|\bfw|k}{\gcd(\cali_N,|\bfw|)}-\frac{\cali_Nw_1r}{\gcd(\cali_N,|\bfw|)}}{v_1|\bfw|}\Bigr). $$
Now 
$$\gcd(\frac{|\bfw|}{\gcd(\cali_N,|\bfw|)},\frac{\cali_Nw_1}{\gcd(\cali_N,|\bfw|)}=1,$$
so we can choose $k$ and $r$ such that numerator of the term in the large parentheses is $1$. This gives period $2\pi\frac{\gcd(\cali_N, |\bfw|)}{v_1|\bfw|}$. Similarly, at $z_1=0$ we have the period $2\pi\frac{\gcd(\cali_N, |\bfw|)}{v_2|\bfw|}$. So the period is the same at the endpoints if and only if $v_1=v_2$ which is equivalent to $\bfv=(1,1)$ since $v_1$ and $v_2$  are relatively prime which proves $(2)$. 

Moreover, the period is the same at all points if and only if
\begin{equation}\label{eqper}
\bfv=(1,1),\qquad \frac{|\bfw|}{\gcd(\cali_N,|\bfw|)}=s=\gcd(|w_2v_1-w_1v_2|,\frac{|\bfw|}{\gcd(|\bfw|,\cali_N)}).
\end{equation} 
But the last equation holds if and only if $\frac{|\bfw|}{\gcd(\cali_N,|\bfw|)}$ divides $w_1-w_2$ proving $(3)$. 

(1) follows from the fact that for each $i=1,2$, $\frac{v_i|\bfw|}{\gcd(\cali_N,|\bfw|)}$ is an integral multiple of $\gcd(|w_2v_1-w_1v_2|,\frac{|\bfw|}{\gcd(|\bfw|,\cali_N)})=s$.
\end{proof}

We have an immediate 
\begin{corollary}\label{cali1cor}
If $\cali_N=1$ there are no regular Reeb vector fields in any $\bfw$-Sasaki cone with $\bfw\neq (1,1)$.
\end{corollary}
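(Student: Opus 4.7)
The plan is to derive this as an immediate consequence of Proposition \ref{regularprop}(3). That proposition says a regular Reeb vector field exists in the $\bfw$-Sasaki cone if and only if $\bfv = (1,1)$ and $\frac{w_1 + w_2}{\gcd(\cali_N, w_1+w_2)}$ divides $w_1 - w_2$. Setting $\cali_N = 1$ forces $\gcd(\cali_N, w_1+w_2) = 1$, so the divisibility condition collapses to the simpler requirement that $w_1 + w_2$ divides $w_1 - w_2$.

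Next I would argue that this divisibility cannot hold when $\bfw \neq (1,1)$. Since the components of $\bfw$ are relatively prime positive integers, $w_1 = w_2$ forces $w_1 = w_2 = 1$, contradicting $\bfw \neq (1,1)$. Hence $w_1 \neq w_2$, and (assuming WLOG $w_1 > w_2$ as throughout the paper) we have
\[
0 < w_1 - w_2 < w_1 + w_2,
\]
which prevents $w_1 + w_2$ from dividing $w_1 - w_2$. Consequently the conditions of Proposition \ref{regularprop}(3) fail, and no regular Reeb vector field lies in the $\bfw$-Sasaki cone.

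The argument is essentially a one-line reduction, so there is no genuine obstacle; the only minor point worth checking is that the hypothesis $\bfw \neq (1,1)$ combined with coprimality really rules out $w_1 = w_2$, which it does. Thus the corollary follows at once from Proposition \ref{regularprop}.
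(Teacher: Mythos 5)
Your proof is correct and follows exactly the route the paper intends: the corollary is stated as an immediate consequence of Proposition \ref{regularprop}(3), and your reduction (with $\cali_N=1$ the divisibility condition becomes $w_1+w_2 \mid w_1-w_2$, impossible since $0< w_1-w_2 < w_1+w_2$ once $\bfw\neq(1,1)$ and the components are coprime) is precisely the intended one-line argument. No issues.
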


Actually, one can say much more.
\begin{proposition}\label{numreg}
Assume $\bfw\neq (1,1)$. There are exactly $K-1$ different $\bfw$-Sasaki cones that have a regular Reeb vector field. These are given by
\begin{equation}\label{regw}
\bfw =\Bigl(\frac{K+n}{\gcd(K+n,K-n)},\frac{K-n}{\gcd(K+n,K-n)}\Bigr)
\end{equation}
where $K=\gcd(\cali_N,|\bfw|)$ and $1\leq n<K$.
\end{proposition}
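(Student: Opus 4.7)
My plan is to prove the result by setting up an explicit bijection between the set of regular $\bfw$-Sasaki cones (with $\bfw\neq(1,1)$) and the integers $\{1,2,\dots,\cali_N-1\}$. Proposition~\ref{regularprop}(3) reduces the existence of a regular Reeb vector field in the $\bfw$-cone to the condition $\bfv=(1,1)$ together with the divisibility $s_{\bfw}\mid (w_1-w_2)$, where $s_{\bfw}:=(w_1+w_2)/\gcd(\cali_N,w_1+w_2)$. My aim is to match this divisibility condition with the parameter $n$ appearing in formula~(\ref{regw}) when we take $K=\cali_N$, so that $n$ ranges over $\{1,\dots,\cali_N-1\}$, giving the claimed count $K-1=\cali_N-1$.

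The forward map will be $\bfw\mapsto n_{\bfw}:=\cali_N(w_1-w_2)/(w_1+w_2)$, and the backward map will be $n\mapsto \bfw(n):=\bigl((\cali_N+n)/e,\,(\cali_N-n)/e\bigr)$, where $e:=\gcd(\cali_N+n,\cali_N-n)$. I then need to verify, in order: (a) for any regular $\bfw$, the value $n_{\bfw}$ is an integer in $\{1,\dots,\cali_N-1\}$; (b) $\bfw(n)$ is a valid weight vector (coprime positive integers with $w_1>w_2$); (c) $\bfw(n)$ satisfies the regularity condition of Proposition~\ref{regularprop}(3); (d) the two maps are mutually inverse. Items (a), (b), and (d) are straightforward: for (a), regularity gives $w_1-w_2 = m\,s_{\bfw}$ for some $m\in\bbz_{\geq 1}$, so $n_{\bfw}=m\cdot\cali_N/\gcd(\cali_N,w_1+w_2)\in\bbz_{\geq 1}$, and $n_{\bfw}<\cali_N$ since $w_1-w_2<w_1+w_2$; (b) follows directly from the definition of $e$ and from $0<n<\cali_N$; (d) follows because the relation $n/\cali_N=(w_1-w_2)/(w_1+w_2)$ uniquely determines the coprime pair $(w_1,w_2)$.

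The main obstacle is step (c), since the $\cali_N$ used in the formula is in general not equal to $\gcd(\cali_N,|\bfw(n)|)$, and so regularity of $\bfw(n)$ is not tautological. My strategy is to exploit the identity $n(w_1+w_2)=\cali_N(w_1-w_2)$ which is read off directly from the formula for $\bfw(n)$. Setting $K':=\gcd(\cali_N,w_1+w_2)$ and writing $\cali_N=K'a$, $w_1+w_2=K'b$ with $\gcd(a,b)=1$, the relation $\cali_N\mid n(w_1+w_2)$ reduces to $a\mid n$. Hence $n=m\cdot\cali_N/K'$ for some positive integer $m$, and substituting back into the identity yields $w_1-w_2=m\,s_{\bfw(n)}$, which is precisely the divisibility required by Proposition~\ref{regularprop}(3). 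With step (c) in place, the bijection is complete and Proposition~\ref{numreg} follows.
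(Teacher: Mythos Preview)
Your proof is correct, and in fact it is more careful than the paper's own argument. Both arguments rest on the same algebraic identity: rewriting the regularity condition of Proposition~\ref{regularprop}(3) as $(K-n)w_1=(K+n)w_2$ and then reading off the coprime pair $(w_1,w_2)$ from the ratio $(K+n):(K-n)$. The difference is in the choice of $K$. The paper works with $K=\gcd(\cali_N,|\bfw|)$, so that $K$ depends on the very $\bfw$ one is trying to parametrize; it then asserts that each $1\le n<K$ produces a solution via \eqref{regw}, without checking that the resulting $\bfw$ actually has $\gcd(\cali_N,|\bfw|)=K$ (and indeed it need not: for $\cali_N=6$, $K=6$, $n=2$ gives $\bfw=(2,1)$ with $\gcd(6,3)=3$). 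You avoid this circularity by fixing $K=\cali_N$ from the start and building an explicit bijection $\{1,\dots,\cali_N-1\}\leftrightarrow\{\text{regular }\bfw\text{-cones}\}$. Your step (c) is exactly the verification the paper omits: that every $\bfw(n)$ produced by the formula genuinely satisfies the divisibility of Proposition~\ref{regularprop}(3), regardless of what $\gcd(\cali_N,|\bfw(n)|)$ turns out to be. The payoff is that your count of $\cali_N-1$ is unambiguous, whereas the paper's ``$K-1$'' is only well-defined once one observes (as your bijection makes clear) that the formula with any admissible $K$ and the formula with $K=\cali_N$ yield the same set of $\bfw$ after reducing to coprime components.
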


\begin{proof}
By Proposition \ref{regularprop} a $\bfw$-Sasaki cone contains a regular Reeb vector field if and only if there is $n\in \bbz^+$ such that 
$$w_1-w_2=n\frac{w_1+w_2}{\gcd(\cali_N,w_1+w_2)}.$$
Clearly, for a solution we must have $n<\gcd(\cali_N,w_1+w_2)$. Then we have a solution if and only if 
$$(K-n)w_1=(K+n)w_2$$
for all $1\leq n<K$. Since $w_1>w_2$ and they are relatively prime we have the unique solution Equation \eqref{regw} for each integer $1\leq n<K$.
\end{proof}

\begin{example}\label{Findex}
Let us determine the $\bfw$-joins with regular Reeb vector field for $\cali_N=2,3$.
For example, if $\cali_N=2$ for a solution to Equation \eqref{regw} we must have $K=2$ which gives
$n=1$ and $\bfw=(3,1)$. This has as a consequence Corollary \ref{Ypqcor} below. 
Similarly if $\cali_N=3$ we must have $K=3$, which gives two solutions $\bfw=(2,1)$ and $\bfw=(5,1)$.
\end{example}

\begin{example}\label{Ypq}
Recall the contact structures $Y^{p,q}$ on $S^2\times S^3$ first studied in the context of Sasaki-Einstein metrics in \cite{GMSW04a}, where $p$ and $q$ are relatively prime positive integers satisfying $p>1$ and $q<p$.
Since in this case the manifold $M_{l_1,l_2,\bfw}=M^3\star_{l_1,l_2}S^3_\bfw$ is $S^2\times S^3$, we have $N=S^2$ with its standard (Fubini-Study) K\"ahler structure. Hence, $\cali_N=2$. 

This was treated in Example 4.7 of \cite{BoPa10} although the conventions\footnote{In particular, there we chose $w_1\leq w_2$; whereas, here we use the opposite convention, $w_1\geq w_2$.} are slightly different. Here we set
\begin{equation}\label{pqw}
\bfw=\frac{1}{\gcd(p+q,p-q)}\bigl(p+q,p-q\bigr).
\end{equation}
Note that the conditions on $p,q$ eliminate the case $\bfw=(1,1)$. We claim that the following relations hold:
\begin{equation}\label{pqrel}
l_1=\gcd(p+q,p-q),\qquad l_2=p.
\end{equation}
To see this we first notice that Lemma \ref{c10} implies that to have a Sasaki-Einstein metric we must have $l_1=1$ and $l_2=\frac{|\bfw|}{2}$ if $|\bfw|$ is even, and $l_1=2$ and $l_2=|\bfw|$ if $|\bfw|$ is odd. Thus, the second of Equations (\ref{pqrel}) follows from the first and Equation (\ref{pqw}). To prove the first equation we first notice that since $p$ and $q$ are relatively prime, $\gcd(p+q,p-q)$ is either $1$ or $2$. Next from Equation (\ref{pqw}) we note that $2p=\gcd(p+q,p-q)|\bfw|$. So if $|\bfw|$ is odd, then $\gcd(p+q,p-q)$ must be even, hence $2$. So the first equation holds in this case. This also shows that if $|\bfw|$ is odd then $p=|\bfw|$ is also odd. Now if $|\bfw|$ is even then both $p+q$ and $p-q$ must be odd. But  then $\gcd(p+q,p-q)$ must be $1$. In this case $p=\frac{|\bfw|}{2}$ which can be either odd or even.

Then from Proposition \ref{regularprop} we recover the following result of \cite{BoPa10}:
\begin{corollary}\label{Ypqcor}
For $Y^{p,q}$ the $\bfw$-Sasaki cone has a regular Reeb vector field if and only if $p=2,q=1$.
\end{corollary}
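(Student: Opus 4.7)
The approach is to combine the classification of regular $\bfw$-joins from Proposition \ref{numreg} with the explicit parametrization of $\bfw$ in terms of $(p,q)$ given by Equation \eqref{pqw}. Since everything has already been set up, the proof is essentially a matter of matching parameters.

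First I would observe that since $N=S^2$ with its Fubini-Study K\"ahler structure the Fano index is $\cali_N=2$, and from Equation \eqref{pqw} one computes $|\bfw|=w_1+w_2=\frac{2p}{\gcd(p+q,p-q)}$. In particular $K:=\gcd(\cali_N,|\bfw|)$ divides $2$. By hypothesis $\bfw\neq(1,1)$, so Proposition \ref{numreg} applies and demands $1\leq n<K$; this forces $K=2$ and $n=1$. Plugging $K=2,n=1$ into Equation \eqref{regw} yields the unique regular weight vector
\begin{equation*}
\bfw=\bigl(3,1\bigr).
\end{equation*}

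Next I would invert \eqref{pqw} to translate $\bfw=(3,1)$ back into a condition on $(p,q)$. This requires $\frac{p+q}{\gcd(p+q,p-q)}=3$ and $\frac{p-q}{\gcd(p+q,p-q)}=1$. Since $\gcd(p,q)=1$, the quantity $\gcd(p+q,p-q)$ is either $1$ or $2$. If it equals $1$ then $(p+q,p-q)=(3,1)$, so $(p,q)=(2,1)$. If it equals $2$ then $(p+q,p-q)=(6,2)$, which gives $(p,q)=(4,2)$, contradicting $\gcd(p,q)=1$. Therefore $(p,q)=(2,1)$ is the only possibility.

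Finally, the converse is trivial: for $(p,q)=(2,1)$ Equation \eqref{pqw} gives $\bfw=(3,1)$, and one checks directly (or just quotes Proposition \ref{regularprop}(3) with $\cali_N=2$) that $\frac{w_1+w_2}{\gcd(\cali_N,w_1+w_2)}=\frac{4}{2}=2$ divides $w_1-w_2=2$, so the $\bfw$-Sasaki cone contains a regular Reeb vector field. There is no real obstacle here; the only careful step is the small case analysis on $\gcd(p+q,p-q)$, which needs the coprimality of $p$ and $q$ to rule out the spurious solution.
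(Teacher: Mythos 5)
Your proposal is correct and follows essentially the same route as the paper: the paper derives the corollary by combining Example \ref{Findex} (which applies Proposition \ref{numreg} with $\cali_N=2$ to get the unique regular weight $\bfw=(3,1)$) with the parametrization \eqref{pqw}, exactly as you do. Your inversion of \eqref{pqw} with the small case analysis on $\gcd(p+q,p-q)$ just makes explicit a step the paper leaves to the reader.
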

For the general $Y^{p,q}$ the Reeb vector field of Theorem 4.2 of \cite{BoPa10} is equivalent to choosing $\bfv=(1,1)$ here. However, as stated in Corollary \ref{Ypqcor}, it is regular only when $p=2,q=1$. Otherwise, it is quasi-regular with ramification index $m=m_1=m_2=p$ if $p$ is odd, and $m=\frac{p}{2}$ if $p$ is even.
We remark that the  quotient of $Y^{2,1}$ by the regular Reeb vector field is $\bbc\bbp^2$ blown-up at a point; whereas, we have arrived at it from the $\bfw$-Sasaki cone of an $S^1$ orbibundle over $S^2\times\bbc\bbp^1[3,1]$. 
\end{example}

\subsection{$B_{l_1,l_2,\bfv,\bfw}$ as a Log Pair}
We follow the analysis in Section 3 of \cite{BoTo13}. We have the action of the 2-torus $S^1_\phi/\bbz_s\times (S^1_\theta/\bbz_{l_2})$ on $M\times L(\frac{|\bfw|}{\gcd(|\bfw|,\cali_N)};l_1w_1,l_1w_2)$ given by Equation (\ref{T2action3}) whose quotient space is $B_{l_1,l_2,\bfv,\bfw}$. It follows from Equation (\ref{T2action3}) that $B_{l_1,l_2,\bfv,\bfw}$ is a bundle over $N$ with fiber a weighted projective space of complex dimension one. By (1) of Proposition \ref{regularprop} the generic period is an integral multiple, say $m_i$, of the period at the divisor $D_i$. Thus, for $i=1,2$ we have 
\begin{equation}\label{ramind}
m_i=v_i\frac{|\bfw|}{s\gcd(|\bfw|,\cali_N)}=v_im.
\end{equation}
Note that from its definition $m=\frac{l_2}{s}=\frac{|\bfw|}{s\gcd(|\bfw|,\cali_N)}$, so $m_i$ is indeed a positive integer. It is the ramification index of the branch divisor $D_i$. We think of $D_1$ as the zero section and $D_2$ as the infinity section of the bundle $B_{l_1,l_2,\bfv,\bfw}$. Thus, $B_{l_1,l_2,\bfv,\bfw}$ is a fiber bundle over $N$ with fiber $\bbc\bbp^1[v_1,v_2]/\bbz_m\approx \bbc\bbp^1$. The isomorphism is simply $[z_1,z_2]\mapsto [z_1^{m_2},z_2^{m_1}]$ where the brackets denote the obvious equivalence classes on $\bbc\bbp^1[v_1,v_2]/\bbz_m$. The complex structure of $B_{l_1,l_2,\bfv,\bfw}$ is the projection of the transverse complex structure on $M_{l_1,l_2,\bfw}$ which in turn is the lift of the product complex structure on $N\times \bbc\bbp^1[\bfw]$. However, $B_{l_1,l_2,\bfv,\bfw}$ is not generally a product as a complex orbifold, nor even topologically.

Now we can follow the analysis leading to Lemma 3.14 of \cite{BoTo13}. So we define the map 
$$\th_\bfv:M\times L(l_2;l_1w_1,l_1w_2)\ra{1.6} M\times L(l_2;l_1w_1v_2,l_1w_2v_1)$$ 
by
\begin{equation}\label{th}
\th_\bfv(x,u;[z_1,z_2])=(x,u;[z_1^{m_2},z_2^{m_1}]).
\end{equation}
It is a $mv_1v_2$-fold covering map. Similar to \cite{BoTo13} we get a commutative diagram:
\begin{equation}\label{actcomdia}
\begin{matrix}
M\times L(l_2;l_1w_1,l_1w_2) &\fract{\cala_{\bfv,l,\bfw}(\grl,\grt)}{\ra{2.5}} & M\times  L(l_2;l_1w_1,l_1w_2) \\
\decdnar{\th_\bfv} && \decdnar{\th_\bfv} \\
M\times  L(l_2;l_1w'_1,l_1w'_2) & \fract{\cala_{1,l,\bfw'}(\grl,\grt^{mv_1v_2})}{\ra{2.5}} & M\times  L(l_2;l_1w'_1,l_1w'_2),
\end{matrix}
\end{equation}
where $\bfw'=(v_2w_1,v_1w_2)$. So $B_{l_1,l_2,\bfv,\bfw}$ is the log pair $(S_n,\grD)$ with 
branch divisor 
\begin{equation}\label{branchdiv}
\grD=(1-\frac{1}{m_1})D_1+ (1-\frac{1}{m_2})D_2,
\end{equation}
where $S_n$ is a smooth $\bbc\bbp^1$-bundle over $N$, that is a ruled manifold as described in Section \ref{ruledsec}. Now $B_{l_1,l_2,\bfv,\bfw}$ is the quotient orbifold 
$$\bigl(M\times L(l_2;l_1w_1,l_1w_2)\bigr)/\cala_{\bfv,l,\bfw}(\grl,\grt),$$ 
and $B_{l_1,l_2,1,\bfw'}$ is the quotient $\bigl(M\times  L(l_2;l_1w'_1,l_1w'_2)\bigr)/\cala_{1,l,\bfw'}(\grl,\grt^{v_1v_2})$. So $\th_\bfv$ induces a map $h_\bfv:B_{l_1,l_2,\bfv,\bfw}\ra{1.6}S_n$ defined by 
\begin{equation}\label{hquot}
h_\bfv([x,u;[z_1,z_2]])=[x,u;[z_1^{m_2},z_2^{m_1}]],
\end{equation}
where the outer brackets denote the equivalence class with respect to the corresponding $T^2$ action. We have

\begin{lemma}\label{biholo}
The map $h_\bfv:B_{l_1,l_2,\bfv,\bfw}\ra{1.6}B_{l_1,l_2,1,\bfw'}$ defined by Equation (\ref{hquot}) is a biholomorphism.
\end{lemma}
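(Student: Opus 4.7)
The plan is to descend the covering map $\th_\bfv$ to the $T^2$-quotients using the commutativity of Diagram~(\ref{actcomdia}), observe that the resulting $h_\bfv$ is automatically holomorphic because $\th_\bfv$ is polynomial on the lens-space coordinates, and then prove bijectivity by a fiberwise degree count over $N$.

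First I would verify well-definedness. Diagram~(\ref{actcomdia}) states that
\[
\th_\bfv\circ \cala_{\bfv,l,\bfw}(\grl,\grt)=\cala_{1,l,\bfw'}(\grl,\grt^{mv_1v_2})\circ \th_\bfv,
\]
so $\th_\bfv$ carries each $\cala_{\bfv,l,\bfw}$-orbit into a single $\cala_{1,l,\bfw'}$-orbit. Therefore the rule $h_\bfv([x,u;[z_1,z_2]])=[x,u;[z_1^{m_2},z_2^{m_1}]]$ descends to a well-defined map between the orbit spaces $B_{l_1,l_2,\bfv,\bfw}$ and $B_{l_1,l_2,1,\bfw'}$. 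Holomorphicity of $h_\bfv$ is then immediate: $\th_\bfv$ is holomorphic (polynomial in $z_1,z_2$ and the identity on $M$), and both quotient projections intertwine the transverse complex structures inherited from the product complex structure on $M\times\bbc^2$.

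The central step is bijectivity. Both $B_{l_1,l_2,\bfv,\bfw}$ and $B_{l_1,l_2,1,\bfw'}$ are bundles over $N$ associated to the circle bundle $M\to N$, and $h_\bfv$ is a bundle morphism covering the identity on $N$. It therefore suffices to prove that the induced map on a typical fiber
\[
\bbc\bbp^1[v_1,v_2]/\bbz_m \;\longrightarrow\; \bbc\bbp^1, \qquad [z_1,z_2]\mapsto [z_1^{m_2},z_2^{m_1}],
\]
is a biholomorphism. I would do this by a degree count. The polynomial map $(z_1,z_2)\mapsto (z_1^{m_2},z_2^{m_1})$ on $\bbc^2\setminus\{0\}$ is generically $m_1m_2$-to-one. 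Using the relation $m_i=v_im$ from Equation~(\ref{ramind}) one has $v_1m_2=v_2m_1=v_1v_2m$, so the $\bbc^*$-action of weights $(v_1,v_2)$ on the source identifies exactly a $\bbz_{v_1v_2m}$ worth of preimages of each target point. Hence the induced map $\bbc\bbp^1[v_1,v_2]\to \bbc\bbp^1$ has degree $m_1m_2/(v_1v_2m)=m$, and dividing the source further by the $\bbz_m$ defining $\bbc\bbp^1[v_1,v_2]/\bbz_m$ reduces the degree to $1$. A holomorphic bijection between compact complex manifolds (or orbifolds with the same underlying space) is a biholomorphism, completing the argument.

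The main obstacle is the combinatorial bookkeeping in the degree count, specifically identifying precisely which subgroup of the fiber $\bbc^*$ stabilizes the image $[z_1^{m_2},z_2^{m_1}]$ and matching it with the diagonal $\bbz_m\subset\bbc^*$ that defines the quotient $\bbc\bbp^1[v_1,v_2]/\bbz_m$. The identities $v_1m_2=v_2m_1=v_1v_2m$ make this clean, but this is the one place where the precise definitions of $m$, of $\bfw'=(v_2w_1,v_1w_2)$, and of the exponent $mv_1v_2$ appearing in Diagram~(\ref{actcomdia}) are used in an essential way.
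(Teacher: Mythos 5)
Your proposal is correct and follows essentially the same route as the paper: descend $\th_\bfv$ through the commutative diagram (\ref{actcomdia}) to get a holomorphic, fiber-preserving map over $N$, and then reduce to showing the induced map on fibers is a bijection with holomorphic inverse. The only difference is that the paper simply asserts the fiberwise bijection, whereas you justify it with an explicit degree count $m_1m_2/(v_1v_2m)=m$ followed by the residual $\bbz_m$ quotient, which is a legitimate (and more detailed) way to verify that assertion.
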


\begin{proof}
The map is ostensibly holomorphic. Now $\th_\bfv$ is the identity map on $M$ and a $v_1v_2$-fold covering map on the corresponding lens spaces. From the commutative diagram (\ref{actcomdia}) the induced map $h_\bfv$ is fiber preserving and is a bijection on the fibers with holomorphic inverse.
\end{proof}

Lemma \ref{biholo} allows us to consider the orbifold $B_{l_1,l_2,\bfv,\bfw}$ as the log pair $(B_{l_1,l_2,1,\bfw'},\grD)$ where $\grD$ is given by Equation \eqref{branchdiv}. Notice that even when $\bfv=(1,1)$ the orbifold structure can be non-trivial, namely, $B_{l_1,l_2,(1,1),\bfw}=(S_n,\grD)$ where $m_1=m_2=m=\frac{|\bfw|}{s\gcd(|\bfw|,\cali_N)}$ and
$$\grD=(1-\frac{1}{m})(D_1+D_2).$$

The $T^2$ action $\cala_{1,\bfl,\bfw'}:M\times  L(l_2;l_1w'_1,l_1w'_2)\ra{1.5} M\times  L(l_2;l_1w'_1,l_1w'_2)$ is given by
\begin{equation}\label{T2action4}
(x,u;z_1,z_2)\mapsto (x,e^{i\theta}u;[e^{i(\phi-\frac{l_1w'_1}{l_2}\theta)}z_1,e^{i(\phi-\frac{l_1w'_2}{l_2}\theta)}z_2]),
\end{equation}
Defining $\chi=\phi-\frac{l_1w'_1}{l_2}\theta$ gives
\begin{equation}\label{T2action5}
(x,u;z_1,z_2)\mapsto (x,e^{i\theta}u;[e^{i\chi}z_1,e^{i(\chi+\frac{l_1}{l_2}(w'_1-w'_2)\theta)}z_2]).
\end{equation}
The analysis above shows that this action is generally not free, but has branch divisors at the zero ($z_2=0$) and infinity ($z_1=0$) sections with ramification indices both equal to $m$. 

Equation (\ref{T2action5}) tells us that the $T^2$-quotient space $B_{l_1,l_2,1,\bfw'}$ is the projectivization of the holomorphic rank two vector bundle $E=\BOne \oplus L_n$ over $N$ where $\BOne$ denotes the trivial line bundle and $L_n$ is a line bundle of `degree' $n=\frac{l_1}{s}(w_1v_2-w_2v_1)$ with $s=\gcd(|w_1v_2-w_2v_1|,l_2)$. So $S_n=\bbp(\BOne\oplus L_n)$ is a smooth projective algebraic variety. Next we identify $N$ with the zero section $D_1$ of $L_n$, and note that $c_1(L_n)$ is just the restriction of the Poincar\'e dual  of $D_1$ to $D_1$, i.e. $PD(D_1)|_{D_1}=c_1(L_n)$.

Summarizing we have

\begin{theorem}\label{preSE}
Let $M_{l_1,l_2,\bfw}=M\star_{l_1,l_2}S^3_\bfw$ be the join as described in the beginning of the section with the induced contact structure $\cald_{l_1,l_2,\bfw}$ satisfying $c_1(\cald_{l_1,l_2,\bfw})=0$. Let $\bfv=(v_1,v_2)$ be a weight vector with relatively prime integer components and let $\xi_\bfv$ be the corresponding Reeb vector field in the Sasaki cone $\gt^+_2$. Then the quotient of $M_{l_1,l_2,\bfw}$ by the flow of the Reeb vector field $\xi_\bfv$ is a projective algebraic orbifold written as a the log pair $(S_n,\grD)$ where $S_n$ is the total space of the projective bundle $\bbp(\BOne\oplus L_n)$ over the Fano manifold $N$ with $n=\frac{l_1}{s}(w_1v_2-w_2v_1)$, $\grD$ the branch divisor
\begin{equation}\label{branchdiv2}
\grD=(1-\frac{1}{m_1})D_1+ (1-\frac{1}{m_2})D_2,
\end{equation}
with ramification indices $m_i=v_i\frac{|\bfw|}{s\gcd(|\bfw|,\cali_N)}=v_im$ and divisors $D_1$ and $D_2$ given by the zero section $\BOne\oplus 0$ and infinity section $0\oplus L_n$, respectively. Here $\cali_N$ is the Fano index of $N$, $s=\gcd(|w_1v_2-w_2v_1|,\frac{|\bfw|}{\gcd(\cali_N,|\bfw|})$, and $l_i$ are the relative indices given by $l_1=\frac{\cali_N}{\gcd(\cali_N,|\bfw|)}$ and $l_2=\frac{|\bfw|}{\gcd(\cali_N,|\bfw|)}$.

The fiber of the orbifold $(S_n,\grD)$ is the orbifold $\bbc\bbp^1[v_1,v_2]/\bbz_m$.
\end{theorem}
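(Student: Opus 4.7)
The plan is to assemble Theorem \ref{preSE} as a summary of the analysis carried out in the preceding subsections, rather than a fresh argument. The first step is to identify the orbit space of $\xi_\bfv$ with the orbifold $B_{l_1,l_2,\bfv,\bfw}$ of Diagram (\ref{comdia2}). Since the Reeb vector field $\xi_\bfv$ in the $\bfw$-Sasaki cone is (up to scaling) the infinitesimal generator of the $S^1_\phi$ factor in the $T^2$-action (\ref{T2action3}) whose full quotient is $B_{l_1,l_2,\bfv,\bfw}$, and since the residual $S^1_\theta/\bbz_{l_2}$ action is already realized in passing from $M \times S^3_\bfw$ to $M_{l_1,l_2,\bfw}$ (cf.\ Equation (\ref{resS1act})), the claim that the $\xi_\bfv$-quotient is precisely $B_{l_1,l_2,\bfv,\bfw}$ reduces to observing that $S^1_\phi/\bbz_s$ acts effectively on the complement of the endpoint divisors by Lemma \ref{generic3}.

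The second step is to replace $\bfv$ by $(1,1)$ at the cost of introducing $\bfw' = (v_2 w_1, v_1 w_2)$ and a nontrivial branch structure. This is exactly what Lemma \ref{biholo} accomplishes: the biholomorphism $h_\bfv : B_{l_1,l_2,\bfv,\bfw} \to B_{l_1,l_2,1,\bfw'}$ transports the orbifold structure to the log pair $(B_{l_1,l_2,1,\bfw'}, \grD)$, where the branch divisor $\grD$ and its ramification indices $m_i = v_i m$ are read off from Equation (\ref{ramind}) together with the defining formula (\ref{th}) for $\th_\bfv$. Thus the orbifold datum of $\grD$ in (\ref{branchdiv2}) is already in hand.

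The third step is to identify the underlying smooth complex manifold of $B_{l_1,l_2,1,\bfw'}$ as a projectivization $S_n = \bbp(\BOne \oplus L_n)$ over $N$. Starting from the action (\ref{T2action5}), after the change of coordinates $\chi = \phi - \frac{l_1 w'_1}{l_2}\theta$, the $S^1_\chi$-quotient of the fiber $L(l_2;l_1 w'_1, l_1 w'_2)$ is $\bbc\bbp^1$, and the relative twist by $e^{i \frac{l_1}{l_2}(w'_1 - w'_2)\theta}$ in the second coordinate encodes a line bundle of rational degree $\tfrac{l_1(w_1 v_2 - w_2 v_1)}{l_2}$ associated to the principal $S^1$-bundle $M\to N$ via the character of weight $\tfrac{l_1(w'_1 - w'_2)}{l_2}$. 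Reducing this to a character of an effectively acting circle by dividing out the kernel $\bbz_s$ (where $s=\gcd(|w_1 v_2 - w_2 v_1|,l_2)$, consistent with Lemma \ref{generic3}) yields the integer degree $n = \tfrac{l_1}{s}(w_1 v_2 - w_2 v_1)$, and hence identifies $B_{l_1,l_2,1,\bfw'}$ with $S_n = \bbp(\BOne \oplus L_n)$; the sections $D_1 = \BOne\oplus 0$ and $D_2 = 0 \oplus L_n$ are precisely the loci $z_2=0$ and $z_1=0$ at which the ramification of Proposition \ref{regularprop}(2) occurs.

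The fiber description is then automatic: the quotient of $\bbc\bbp^1[v_1,v_2]$ by the $\bbz_m$ generated by the diagonal $e^{2\pi i/m}$ gives $\bbc\bbp^1[v_1,v_2]/\bbz_m$, which by the map $[z_1,z_2]\mapsto [z_1^{m_2},z_2^{m_1}]$ is identified with the smooth $\bbc\bbp^1$-fiber of $S_n$. The main technical obstacle is the careful bookkeeping of $s$: one has to verify that the $S^1_\chi$ action on the lens space fiber descends to an effectively acting circle only after quotienting by $\bbz_s$, so that the line bundle $L_n$ has the precise integral degree stated and is not an orbifold line bundle on $N$. Once that reduction is accounted for, every assertion in Theorem \ref{preSE} follows by combining Lemma \ref{biholo}, Lemma \ref{generic3}, Proposition \ref{regularprop}, and the explicit form of (\ref{T2action5}).
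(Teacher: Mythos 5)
Your proposal is correct and takes essentially the same route as the paper: Theorem \ref{preSE} is presented there explicitly as a summary (``Summarizing we have'') of the preceding analysis, assembled from Lemma \ref{generic3} and Proposition \ref{regularprop} (effectiveness of $S^1_\phi/\bbz_s$ and the ramification data of Equation (\ref{ramind})), Lemma \ref{biholo} (the biholomorphism onto $B_{l_1,l_2,1,\bfw'}$ carrying the log-pair structure), and the identification of $B_{l_1,l_2,1,\bfw'}$ with $\bbp(\BOne\oplus L_n)$, $n=\frac{l_1}{s}(w_1v_2-w_2v_1)$, read off from Equation (\ref{T2action5}). Your bookkeeping of the $\bbz_s$-reduction in determining the integral degree of $L_n$ is in fact slightly more explicit than the paper's, which simply asserts the formula for $n$.
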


Notice that in this case we have $\pi_1^{orb}(S_n,\grD)=\bbz_m$.

\begin{example}
Consider $\bfw=(11,1)$ and $\bfv=(4,5)$. Let us take $\cali_N=1$. So we have $l_1=1,l_2=12,$ and $s=\gcd(12,55-4)=3$. Thus, the generic period of the Reeb vector field $\xi_\bfv$ is $\frac{2\pi}{3}$. The period at the endpoint $z_2=0$ (on the branch divisor $D_1$) is $\frac{2\pi}{48}$, and at the endpoint $z_1=0$ (on $D_2)$ it is $\frac{2\pi}{60}$. So there is an effective action of $S^1_\phi/\bbz_3$. This gives an isotropy of $\bbz_{16}$ on the branch divisor $D_1$, and an isotropy of $\bbz_{20}$ on $D_2$. The ramification indices are then $m_1=16$, $m_2=20$, and $m=\frac{12}{3}=4$. The projective bundle $S_{n}=\bbp(\BOne\oplus L_n)$ has $n=\frac{51}{3}=17$. So our log pair is $(S_{17},\grD)$ with 
$$\grD=\frac{15}{16}D_1+\frac{19}{20}D_2.$$
The fiber $F$ is a quotient of the orbifold weighted projective space $\bbc\bbp^1[4,5]$, namely $F=\bbc\bbp^1[4,5]/\bbz_4$. Here $\pi_1^{orb}(F)=\bbz_4$.

Now let us consider $\bfw=(11,1)$ and $\bfv=(1,1)$. Again take $\cali_N=1$, so $l_1=1, l_2=12$, and $s=\gcd(12,11-1)=2.$ So the  generic period of $\xi_\bfv$ is $\frac{2\pi}{2}=\pi$. The period on the branch divisors $D_1$ and $D_2$ are both $\frac{2\pi}{12}=\frac{\pi}{6}$, and the ramification index $m=6$. The projective bundle $S_n=\bbp(\BOne\oplus L_n)$ has $n=5$, so our log pair is $(S_5,\grD)$ with branch divisor
$$\grD= \frac{5}{6}(D_1+D_2).$$
In this case the fiber is a global quotient (developable) orbifold, namely $F=\bbc\bbp^1/\bbz_6$ with $\pi_1^{orb}(F)=\bbz_6$.
\end{example}

\begin{example}\label{Ypq2} 
This is a continuation of Example \ref{Ypq}. We take $\bfv=(1,1)$. Then 
$$s=\gcd(l_2,w_1-w_2)=\gcd(p,\frac{2q}{l_1}).$$
So $s=1$ if $l_1=2$ that is when $|\bfw|$ is odd which also implies that $p$ is odd. Whereas, if $|\bfw|$ is even, $l_1=1$, so $s=\gcd(p,2q)$ which is $2$ if $p$ is even and $1$ if $p$ is odd. Now consider $n$. We have 
$$n=\frac{l_1}{s}(w_1-w_2)=\frac{2q}{s}.$$
Thus, $n=2q$ when $p$ is odd, and $n=q$ when $p$ is even in which case $q$ must be odd. Moreover, from Equation (\ref{ramind}) we have
$$m=\frac{|\bfw|}{s\gcd(|\bfw|,\cali_N)}= 
\begin{cases} p~ \text{if $p$ is odd} \\
                \frac{p}{2} ~\text{if $p$ is even.}
                \end{cases}$$
So in this case our log pair is $(S_{2q},\grD)$ with 
$$\grD=\bigl(1-\frac{1}{p}\bigr)(D_1+D_2)$$
if $p$ is odd, and $(S_q,\grD)$ with $q$ odd and
$$\grD=\bigl(1-\frac{2}{p}\bigr)(D_1+D_2)$$
if $p$ is even. Here $S_{2q}$ and $S_q$ are the even and odd Hirzebruch surfaces, respectively. Note for $Y^{2,1}$ there is no branch divisor, so it is regular as we know from Corollary \ref{Ypqcor} and $S_1$ is $\bbc\bbp^2$ blown-up at a point.
\end{example}

\subsection{Examples with $N$ a del Pezzo Surface}
The Fano manifolds of complex dimension $2$ are usually called {\it del Pezzo surfaces}. They are exactly $\bbc\bbp^2,\bbc\bbp^1\times \bbc\bbp^1$, and $\bbc\bbp^2$ blown-up at $k$ generic points with $1\leq k\leq 8$. The join will be a Sasaki 7-manifold for these cases.

\begin{example}\label{Ncp2}
For $N=\bbc\bbp^2$ with its standard Fubini-Study K\"ahlerian structure, we have $\cali_N=3$. From Example \ref{Findex} we see that we have a regular Reeb vector field in the $\bfw$-Sasaki cone in precisely two cases, either $\bfw=(2,1)$, or $\bfw=(5,1)$. In the first case the relative Fano indices are $(l_1,l_2)=(1,1)$ while in the second case they are $(l_1,l_2)=(1,2)$. In the former case our 7-manifold $M^7_{(2,1)}=S^5\star_{1,1}S^3_{(2,1)}$ is an $S^3$-bundle over $\bbc\bbp^2$; whereas, in the latter case the 7-manifold $M^7_{(5,1)}=S^5\star_{1,2}S^3_{(5,1)}$ is an $L(2;5,1)$ bundle over $\bbc\bbp^2$. Moreover, it follows from standard lens space theory that $L(2;5,1)$ is diffeomorphic to the real projective space $\bbr\bbp^3$.

\end{example}

\begin{example}\label{2cp1}
For $N=\bbc\bbp^1\times \bbc\bbp^1$ with its standard Fubini-Study K\"ahlerian structure, we have $\cali_N=2$. There is only one case with a regular Reeb vector field, and that is $\bfw=(3,1)$. Here the relative Fano indices are $(1,2)$. In this case the 7-manifold is $(S^2\times S^3)\star_{1,2}S^3_{(3,1)}$ which can be realized as an $L(2;3,1)\approx \bbr\bbp^3$ lens space bundle over $\bbc\bbp^1\times\bbc\bbp^1$.
\end{example}

\begin{example}\label{blowups}
We take $N$ to be $\bbc\bbp^2$ blown-up at $k$ generic points where $k=1,\ldots,8$, or equivalently $N=N_k=\bbc\bbp^2\#k\overline{\bbc\bbp}^2$. All the K\"ahler structures have an extremal representative, but for $k=1,2$ they are not CSC. However, for $k=3,\ldots,8$ they are CSC, and hence, K\"ahler-Einstein. Notice that when $4\leq k\leq 8$ the complex automorphism group has dimension $0$, so the $\bfw$-Sasaki cone is the entire Sasaki cone. Moreover, if $5\leq k\leq 8$ the local moduli space has positive dimension, and we can choose any of the complex structures. By a theorem of Kobayashi and Ochiai \cite{KoOc73} we have $\cali_{N_k}=1$ for all $k=1,\ldots,8$. So $l_1=1,l_2=|\bfw|$, and by Corollary \ref{cali1cor} there are no regular Reeb vector fields in the $\bfw$-Sasaki cone with $\bfw\neq (1,1)$. In particular, if $4\leq k\leq 8$, there are no regular Reeb vector fields in the Sasaki cone. Generally, these are $L(|\bfw|;w_1,w_2)$ lens space bundles over $N_k$. Of course, the case $\bfw=(1,1)$ is just an $S^1$-bundle over $N_k\times \bbc\bbp^1$ with the product complex structure which is automatically regular. These were studied in \cite{BG00a}.
\end{example}

\section{The Topology of the Joins}

Since we are interested in compact Sasaki-Einstein manifolds, which have finite fundamental group, we shall assume that the Sasaki manifold $M$ is simply connected. It is then easy to construct examples with cyclic fundamental group. From the homotopy exact sequence of the fibration $S^1\ra{1.5}M\times S^3\ra{1.5} M_{l_1,l_2,\bfw}$ we have
\begin{proposition}\label{simcon}
If $M$ is simply connected, then so is $M_{l_1,l_2,\bfw}$. Moreover, if $M$ is 2-connected, $\pi_2(M_{l_1,l_2,\bfw})\approx \bbz$.
\end{proposition}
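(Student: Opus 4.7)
The proposition is a direct application of the long exact homotopy sequence to the principal $S^1$-bundle
$$S^1\ra{1.5} M\times S^3\ra{1.5} M_{l_1,l_2,\bfw},$$
which is genuinely a fibration of manifolds under the smoothness hypothesis $\gcd(l_2,w_i)=1$ imposed earlier (so that the circle action generated by $L_{l_1,l_2}$ is free, not merely locally free).

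First I would write the relevant portion of the long exact sequence,
$$\pi_2(M\times S^3)\ra{1.5}\pi_2(M_{l_1,l_2,\bfw})\ra{1.5}\pi_1(S^1)\ra{1.5}\pi_1(M\times S^3)\ra{1.5}\pi_1(M_{l_1,l_2,\bfw})\ra{1.5}\pi_0(S^1)=0.$$
Since $\pi_1(M\times S^3)\cong \pi_1(M)\times \pi_1(S^3)=0$ whenever $M$ is simply connected (using $\pi_1(S^3)=0$), exactness at $\pi_1(M_{l_1,l_2,\bfw})$ immediately yields $\pi_1(M_{l_1,l_2,\bfw})=0$, proving the first assertion.

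For the second assertion, if $M$ is 2-connected then $\pi_2(M\times S^3)\cong \pi_2(M)\oplus \pi_2(S^3)=0$ (using $\pi_2(S^3)=0$ as well as $\pi_2(M)=0$), and again $\pi_1(M\times S^3)=0$. The exact sequence then collapses to
$$0\ra{1.5}\pi_2(M_{l_1,l_2,\bfw})\ra{1.5}\pi_1(S^1)\ra{1.5} 0,$$
which identifies $\pi_2(M_{l_1,l_2,\bfw})$ with $\pi_1(S^1)\cong\bbz$.

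There is essentially no obstacle here: the only point requiring care is confirming that the quotient is a smooth manifold (so that the homotopy sequence applies directly) and that the fiber $S^1$ indeed contributes $\pi_1(S^1)=\bbz$ as the connecting term; both are guaranteed by the standing smoothness assumption $\gcd(l_2,w_1w_2)=1$ recorded in the paragraph defining the $S^3_\bfw$-join.
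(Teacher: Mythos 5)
Your proof is correct and is exactly the argument the paper uses: the long exact homotopy sequence of the circle fibration $S^1\to M\times S^3\to M_{l_1,l_2,\bfw}$, together with $\pi_1(S^3)=\pi_2(S^3)=0$. Your added remark that freeness of the $S^1$-action (guaranteed by the standing smoothness condition) is what makes this a genuine fibration of manifolds is a point the paper leaves implicit.
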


We now describe our method for computing the cohomology ring of the join $M_{l_1,l_2,\bfw}$.

\subsection{The Method}
Our approach uses the spectral sequence method employed in \cite{WaZi90,BG00a} (see also Section 7.6.2 of \cite{BG05}). The fibration $\pi_L$ in Diagram (\ref{s2comdia}) together with the torus bundle with total space $M\times S^3_\bfw$ gives the commutative diagram of fibrations
\begin{equation}\label{orbifibrationexactseq}
\begin{matrix}M\times S^3_\bfw &\ra{2.6} &M_{l_1,l_2,\bfw}&\ra{2.6}
&\mathsf{B}S^1 \\
\decdnar{=}&&\decdnar{}&&\decdnar{\psi}\\
M\times S^3_\bfw&\ra{2.6} & N\times\mathsf{B}\bbc\bbp^1[\bfw]&\ra{2.6}
&\mathsf{B}S^1\times \mathsf{B}S^1\, 
\end{matrix} \qquad \qquad
\end{equation}
where $\mathsf{B}G$ is the classifying space of a group $G$ or Haefliger's classifying space \cite{Hae84} of an orbifold if $G$ is an orbifold. Note that the lower fibration is a product of fibrations. In particular,  the fibration 
\begin{equation}\label{cporbfib}
S^3_\bfw \ra{2.6} \mathsf{B}\bbc\bbp^1[\bfw]\ra{2.6} \mathsf{B}S^1
\end{equation}
is rationally equivalent to the Hopf fibration, so over $\bbq$ the only non-vanishing differentials in its Leray-Serre spectral sequence are $d_4(\grb)=s^2$ where $\grb$ is the orientation class of $S^3$ and $s$ is a basis in $H^2( \mathsf{B}S^1,\bbq)\approx \bbq$ and those induced from $d_4$ by naturality. However, we want the cohomology over $\bbz$. 

\begin{lemma}\label{cporbcoh}
For $w_1$ and $w_2$ relatively prime positive integers we have
$$H^r_{orb}(\bbc\bbp^1[\bfw],\bbz)=H^r( \mathsf{B}\bbc\bbp^1[\bfw],\bbz)= \begin{cases}
                    \bbz &\text{for $r=0,2$,}\\                  
                    \bbz_{w_1w_2} &\text{for $r>2$ even,}\\
                     0 &\text{for $r$ odd.}
                     \end{cases}$$           
\end{lemma}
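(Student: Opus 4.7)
The plan is to realize $\mathsf{B}\bbc\bbp^1[\bfw]$ as the Borel construction $ES^1 \times_{S^1} S^3$, where $S^1$ acts on $S^3 \subset \bbc^2$ with weights $(w_1,w_2)$; this is the standard classifying-space model for the global quotient orbifold $\bbc\bbp^1[\bfw]$ in the sense of Haefliger \cite{Hae84}. With this model in hand, there is a canonical fibration
$$S^3 \ra{2.0} \mathsf{B}\bbc\bbp^1[\bfw] \ra{2.0} \mathsf{B}S^1,$$
and I would attack the problem by analyzing its integral Leray--Serre spectral sequence. Since $\mathsf{B}S^1$ is simply connected, $E_2^{p,q} = H^p(\mathsf{B}S^1,\bbz) \otimes H^q(S^3,\bbz)$, which is zero unless $p$ is even and $q\in\{0,3\}$. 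Consequently the only possibly nontrivial differential is the transgression $d_4\colon E_4^{p,3} \ra{1.5} E_4^{p+4,0}$.

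The key step is to evaluate $d_4(\beta)$ for $\beta$ a generator of $H^3(S^3,\bbz)$. The Borel construction $ES^1 \times_{S^1} S^3$ is precisely the unit sphere bundle of the complex rank-two bundle $L_{w_1}\oplus L_{w_2}$ over $\mathsf{B}S^1$, where $L_w$ is the complex line bundle with $c_1(L_w) = w s$ and $s$ generates $H^2(\mathsf{B}S^1,\bbz)$. The Gysin sequence of this oriented $S^3$-bundle then identifies $d_4(\beta)$ with the Euler class
$$e(L_{w_1}\oplus L_{w_2}) \;=\; c_1(L_{w_1})\,c_1(L_{w_2}) \;=\; w_1 w_2\, s^2.$$

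Since $H^*(\mathsf{B}S^1,\bbz) \cong \bbz[s]$ is torsion-free, multiplication by $w_1 w_2 s^2$ is injective, so every $E_\infty^{p,3}$ vanishes. On the bottom row one obtains $E_\infty^{0,0} = E_\infty^{2,0} = \bbz$, while $E_\infty^{2k,0} = \bbz/w_1 w_2\bbz$ for all $k\geq 2$. Reading off the associated graded of the filtration yields the cohomology exactly as stated. I expect the only point requiring care to be the identification of $d_4$ with the stated Euler class; this is routine via the Gysin sequence for sphere bundles, and the coprimality hypothesis $\gcd(w_1,w_2)=1$ plays no role in the computation itself (it only guarantees that the quotient groupoid is the weighted projective line in the usual sense). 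The remaining spectral sequence bookkeeping is mechanical.
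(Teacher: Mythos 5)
Your argument is correct, and it reaches the same answer by a genuinely different route from the paper. You work entirely inside the Leray--Serre spectral sequence of the Borel fibration $S^3\to ES^1\times_{S^1}S^3\to \mathsf{B}S^1$, identifying the transgression $d_4(\beta)$ with the Euler class $c_1(L_{w_1})c_1(L_{w_2})=w_1w_2\,s^2$ of the rank-two bundle whose sphere bundle is the Borel construction; injectivity of multiplication by $w_1w_2 s^2$ in $\bbz[s]$ then kills the $q=3$ row and leaves $\bbz,\bbz,\bbz_{w_1w_2},\bbz_{w_1w_2},\ldots$ on the bottom row, which is exactly the stated ring of groups. The paper instead runs a Mayer--Vietoris argument on the Haefliger classifying space, covering $\mathsf{B}\bbc\bbp^1[\bfw]$ by the two pieces $p^{-1}(U_i)\simeq K(\bbz_{w_i},1)$ lying over the complements of the two cone points; there the torsion appears as $\bbz_{w_1}\oplus\bbz_{w_2}\cong\bbz_{w_1w_2}$, with coprimality used both to make the overlap have trivial isotropy and to make the sum cyclic, and a small spectral-sequence input is still needed to pin down $H^2\cong\bbz$. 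Your approach is more self-contained in one respect: it establishes the differential $d_4(\alpha)=w_1w_2s^2$ of Lemma \ref{LS} as the primary computation rather than deducing it afterwards from the known cohomology, which is the form in which the result is actually used downstream (e.g.\ in Theorem \ref{topcpr}). The paper's Mayer--Vietoris route, on the other hand, exhibits the geometric source of the torsion (the two isotropy groups at the orbifold points) and does not require presenting the orbifold as a global quotient by a Lie group action. Your parenthetical caveat is also the right one: the only place $\gcd(w_1,w_2)=1$ enters your computation is in guaranteeing that the action groupoid $S^3\rtimes S^1$ is Morita equivalent to the orbifold $\bbc\bbp^1[\bfw]$ (otherwise there is a global stabilizer and the Borel construction computes the cohomology of the ineffective quotient stack instead).
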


\begin{proof}
As in \cite{BG05} we cover the $\mathsf{B}\bbc\bbp^1[\bfw]$ with two overlapping open sets $p^{-1}(U_i)\approx \tU_i\times_{\grG_i}EO$ where $U_i$ is $\bbc\bbp^1\setminus \{0\}$ and $\bbc\bbp^1\setminus \{\infty\}$ for $i=1,2$, respectively. The Mayer-Vietoris sequence is
$$\ra{.8}H^r(\mathsf{B}\bbc\bbp^1[\bfw],\bbz)\ra{1.0} H^r(p^{-1}(U_1),\bbz)\oplus H^r(p^{-1}(U_2)\ra{1.0} H^r(p^{-1}(U_1)\cap p^{-1}(U_2),\bbz)\ra{.4}\cdots$$
Now $p^{-1}(U_i)\approx \tU_i\times_{\grG_i}EO$ is the Eilenberg-MacLane space $K(\bbz_{w_i},1)$ whose cohomology is the group cohomology
$$H^r(\bbz_{w_i},\bbz)=\begin{cases}
                    \bbz &\text{for $r=0$,}\\
                    \bbz_{w_i} &\text{for $r>0$ even,}\\
                     0 &\text{for $r$ odd.}
\end{cases}$$
Moreover, $p^{-1}(U_1)\cap p^{-1}(U_2)=\tU_1\cap\tU_2\times_{\grG_1\cap\grG_2}EO$ and  since $w_1,w_2$ are relatively prime $\grG_1\cap\grG_2=\bbz_{w_1}\cap\bbz_{w_2}=\{\BOne\}$.
So for $r=2$ the Mayer-Vietoris sequence becomes
\begin{equation}\label{LS2}
0\ra{1.8}\bbz\fract{j}{\ra{1.8}}H^2(\mathsf{B}\bbc\bbp^1[\bfw],\bbz)\ra{1.8}\bbz_{w_1w_2}\ra{1.8} 0.
\end{equation}
From the $E_2$ term of the Leray-Serre spectral sequence of the fibration (\ref{cporbfib}), we see that the map $j$ in (\ref{LS2}) must be multiplication by $w_1w_2$ implying that $H^2(\mathsf{B}\bbc\bbp^1[\bfw],\bbz)\approx \bbz$.

For $r>2$ even the sequence gives $H^r(\mathsf{B}\bbc\bbp^1[\bfw],\bbz)\approx \bbz_{w_1}\oplus \bbz_{w_2}\approx \bbz_{w_1w_2}$, whereas for $r$ odd $H^r(\mathsf{B}\bbc\bbp^1[\bfw],\bbz)\approx 0$.
\end{proof} 

One now easily sees that 

\begin{lemma}\label{LS}
The only non-vanishing differentials in the Leray-Serre spectral sequence of the fibration (\ref{cporbfib}) are those induced naturally by $d_4(\gra)= w_1w_2s^2$ for $s\in H^2(\mathsf{B}S^1,\bbz)\approx \bbz[s]$ and $\gra$ the orientation class of $S^3$.       
\end{lemma}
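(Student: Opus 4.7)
The plan is to use bidegree considerations together with the cohomology computation of Lemma \ref{cporbcoh} to pin down the unique non-trivial differential. First I would identify the $E_2$ page. Since $\mathsf{B}S^1$ is simply connected and $H^*(S^3,\bbz)$ is free, one has $E_2^{p,q} = H^p(\mathsf{B}S^1,\bbz)\otimes H^q(S^3,\bbz)$. This is supported exactly on the two rows $q=0$ and $q=3$ and in even columns $p\geq 0$, so the whole $E_2$ page is generated multiplicatively by $s \in E_2^{2,0}$ and the orientation class $\alpha \in E_2^{0,3}$ of $S^3$, with $E_2^{2k,0}=\bbz s^k$ and $E_2^{2k,3}=\bbz\,\alpha s^k$.

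Next, a bidegree count on $d_r\colon E_r^{p,q}\to E_r^{p+r,q-r+1}$ shows that for both source and target to lie on one of the two nonzero rows one must have $r=4$. Hence $E_2=E_4$ and $E_5=E_\infty$, and by multiplicativity $d_4$ is determined by a single integer $c$ via $d_4(\alpha)=c\,s^2$, with $d_4(\alpha s^{k-2})=c\,s^k$ for $k\geq 2$.

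To pin down $c$, I would compare with Lemma \ref{cporbcoh}. The only piece of $E_\infty$ contributing to $H^4(\mathsf{B}\bbc\bbp^1[\bfw],\bbz)$ is $E_\infty^{4,0}=\bbz s^2/c\,\bbz s^2\cong\bbz/c\bbz$ (the bidegree $(1,3)$ is zero on $E_2$), and Lemma \ref{cporbcoh} says this group must equal $\bbz_{w_1w_2}$. Hence $|c|=w_1w_2$, and choosing the sign of the orientation class $\alpha$ appropriately yields $d_4(\alpha)=w_1w_2\,s^2$. As a consistency check, one can verify that the resulting $E_\infty^{2k,0}=\bbz/w_1w_2\bbz$ for $k\geq 2$ and $E_\infty^{2k,3}=0$ for $k\geq 0$ reproduce the full cohomology of Lemma \ref{cporbcoh}.

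The only real obstacle is the bookkeeping; there is no new input beyond the cohomology already computed in Lemma \ref{cporbcoh} and the standard multiplicative structure of the Leray--Serre spectral sequence.
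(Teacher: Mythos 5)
Your proof is correct and follows essentially the same route the paper intends: the paper leaves Lemma \ref{LS} as an immediate consequence of Lemma \ref{cporbcoh} (``one now easily sees''), and your argument simply makes explicit the bidegree count showing $d_4$ is the only possible differential and the comparison with $H^4(\mathsf{B}\bbc\bbp^1[\bfw],\bbz)\approx\bbz_{w_1w_2}$ that pins down $d_4(\gra)=w_1w_2s^2$. Your use of Lemma \ref{cporbcoh} is not circular, since the $r\geq 4$ groups there are computed purely by Mayer--Vietoris.
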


Now the map $\psi$ of Diagram (\ref{orbifibrationexactseq}) is that induced by the inclusion $e^{i\theta}\mapsto (e^{il_2\theta},e^{-il_1\theta})$. So noting 
$$H^*(\mathsf{B}S^1\times \mathsf{B}S^1,\bbz)=\bbz[s_1,s_2]$$ 
we see that $\psi^*s_1=l_2s$ and $\psi^*s_2=-l_1s$. This together with Lemma \ref{LS} gives $d_4(\gra)=w_1w_2l_1^2s^2$ in the Leray-Serre spectral sequence of the top fibration in Diagram (\ref{orbifibrationexactseq}).

Further analysis depends on the differentials in the spectral sequence of the fibration 
\begin{equation}\label{MNspec}
M\ra{1.5}N\ra{1.5}\mathsf{B}S^1. 
\end{equation}

\begin{algorithm}
Given the differentials in the spectral sequence of the fibration (\ref{MNspec}), one can use the commutative diagram (\ref{orbifibrationexactseq}) to compute the cohomology ring of the join manifold $M_{l_1,l_2,\bfw}$.
\end{algorithm}

\subsection{Examples in General Dimension}
One case that is particularly easy to describe in all odd dimensions is when $M$ is the odd-dimensional sphere $S^{2r+1}$ with $r=2,3,\ldots,$. Here we have $N=\bbc\bbp^r$ in which case we have $\cali_N=r+1$. So the relative Fano indices are 
\begin{equation}\label{relFanind}
(l_1(\bfw),l_2(\bfw))=\bigl(\frac{r+1}{\gcd(|\bfw|,r+1)},\frac{|\bfw|}{\gcd(|\bfw|,r+1)}\bigr).
\end{equation}

Since $l_1,l_2$ are uniquely determined by $r$ and $\bfw$, we write our joins as $M^{2r+3}_\bfw$. We have\begin{theorem}\label{topcpr}
The join $M^{2r+3}_{\bfw}=S^{2r+1}\star_{l_1,l_2}S^3_\bfw$ with relative Fano indices given by Equation (\ref{relFanind}) has integral cohomology ring
given by
$$\bbz[x,y]/(w_1w_2l_1(\bfw)^2x^2,x^{r+1},x^2y,y^2)$$
where $x,y$ are classes of degree $2$ and $2r+1$, respectively.
\end{theorem}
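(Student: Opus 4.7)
My plan is to apply the algorithm established just above, running the Leray--Serre spectral sequence of the top fibration $S^{2r+1}\times S^3\to M^{2r+3}_\bfw\to\mathsf{B}S^1$ in diagram \eqref{orbifibrationexactseq} with $M=S^{2r+1}$ and $N=\bbc\bbp^r$. The $E_2$ page is $\bbz[s]\otimes H^*(S^{2r+1}\times S^3,\bbz)$, where $H^*$ of the fiber is the exterior algebra $\Lambda[\alpha,\beta]$ with $\alpha$ the orientation class of $S^{2r+1}$ in degree $2r+1$ and $\beta$ that of $S^3$ in degree $3$. The first task is to pin down the differentials.

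Both differentials arise by naturality from the lower product fibration. For the $S^3_\bfw$ factor, Lemma \ref{LS} gives $d_4(\beta)=w_1w_2\,s_2^2$, and $\psi^*s_2=-l_1 s$ yields $d_4(\beta)=w_1w_2l_1^2 s^2$ as already noted in the text. For the $S^{2r+1}$ factor I use the classical fibration $S^{2r+1}\to\bbc\bbp^r\to\mathsf{B}S^1$, whose only nontrivial differential must be $d_{2r+2}(\alpha)=s_1^{r+1}$ in order that the spectral sequence converge to $H^*(\bbc\bbp^r)=\bbz[s_1]/(s_1^{r+1})$; pulling back through $\psi^*s_1=l_2 s$ gives
$$d_{2r+2}(\alpha)=l_2^{r+1}\,s^{r+1}$$
in the top spectral sequence. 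The Leibniz rule shows $d_4$ kills both the $\beta$-row and the $\alpha\beta$-row, so $E_5$ is concentrated in rows $q=0$ and $q=2r+1$: columns $p=0,2$ are free of rank one in each row, while columns $p=2k$ with $k\geq 2$ are $\bbz/(w_1w_2l_1^2)$. For $r\geq 2$ there is no room for intermediate differentials between pages $E_5$ and $E_{2r+2}$.

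The key arithmetic input is the coprimality
$$\gcd\bigl(l_2^{r+1},\,w_1w_2l_1^2\bigr)=1.$$
This follows from $\gcd(l_1,l_2)=1$ (built into the relative Fano indices) together with the elementary fact that $\gcd(w_1,w_2)=1$ forces $\gcd(w_1w_2,w_1+w_2)=1$, hence $\gcd(w_1w_2,l_2)=1$ since $l_2$ divides $|\bfw|=w_1+w_2$. Thus $l_2^{r+1}$ is a unit in $\bbz/(w_1w_2l_1^2)$, so $d_{2r+2}\colon E_{2r+2}^{2k,2r+1}\to E_{2r+2}^{2k+2r+2,0}$ is an isomorphism between copies of $\bbz/(w_1w_2l_1^2)$ for $k\geq 2$, and on the free positions $p=0,2$ it has kernel $w_1w_2l_1^2\bbz$ and surjects onto its torsion target. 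Consequently, $E_\infty$ has $\bbz$ in bidegrees $(0,0),(2,0),(0,2r+1),(2,2r+1)$, has $\bbz/(w_1w_2l_1^2)$ in bidegrees $(2k,0)$ for $2\leq k\leq r$, and vanishes elsewhere.

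Finally, let $x\in H^2$ correspond to $s$ and $y\in H^{2r+1}$ correspond to the generator $w_1w_2l_1^2\,\alpha$ of $E_\infty^{0,2r+1}$. Multiplicativity of the spectral sequence shows $x^k$ generates $H^{2k}$ for $1\leq k\leq r$, giving $w_1w_2l_1^2 x^2=0$ and $x^{r+1}=0$ (the latter since $E_\infty^{2r+2,0}=0$). The relations $y^2=0$ and $x^2 y=0$ are forced by the dimension bound $\dim M^{2r+3}_\bfw=2r+3$, as both $4r+2$ and $2r+5$ exceed this. The product $xy$ corresponds to $w_1w_2l_1^2\,\alpha s$, the generator of $E_\infty^{2,2r+1}$, confirming that $H^{2r+3}$ is generated by $xy$. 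No additive or multiplicative extension problems arise, because each nonzero total degree meets $E_\infty$ in a single bidegree. The principal delicacy in the argument is the coprimality identity above; without it the $d_{2r+2}$ differentials would leave residual torsion and the stated clean answer would fail.
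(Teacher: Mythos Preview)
Your proof is correct and follows essentially the same approach as the paper: both run the Leray--Serre spectral sequence of the top fibration in diagram~\eqref{orbifibrationexactseq}, identify the differentials $d_4$ and $d_{2r+2}$ by naturality from the lower product fibration, and use the coprimality of $l_2$ with $w_1w_2l_1^2$ to collapse the spectral sequence. Your version spells out more carefully the page-by-page computation, the arithmetic justification of the coprimality, and the absence of extension problems, but the underlying argument is the same (note only that the paper uses the opposite naming convention, with $\alpha$ the $3$-class and $\beta$ the $(2r+1)$-class).
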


\begin{proof}
The $E_2$ term of the Leray-Serre spectral sequence of the top fibration of diagram (\ref{orbifibrationexactseq}) is 
$$E^{p,q}_2=H^p(\mathsf{B}S^1,H^q(S^{2r+1}\times S^3_\bfw,\bbz))\approx \bbz[s]\otimes\grL[\gra,\grb],$$ 
where $\gra$ is a $3$-class and $\grb$ is a $2r+1$ class. By the Leray-Serre Theorem this converges to $H^{p+q}(M_{\bfw}^{2r+3},\bbz)$.
From the usual Hopf fibration and Lemma \ref{LS} the only non-zero differentials in the Leray-Serre spectral sequence of the bottom fibration in Diagram (\ref{orbifibrationexactseq}) are $d_4(\gra)=w_1w_2s^2_2$ and $d_{2r+2}(\grb)=s^{r+1}_1$. By naturality the differentials of the top fibration of (\ref{orbifibrationexactseq}) are $d_4(\gra)=w_1w_2(-l_1s)^2$ and $d_{2r+2}(\grb)=(l_2s)^{r+1}$. It follows that $H^{p}(M_{\bfw}^{2r+3},\bbz)$ has an element $x$ of degree $2$ with $w_1w_2l_1^2x^2$ vanishing, and since $l_2$ is relatively prime to $w_1w_2l_1^2$,  $x^p$ vanishes for $p\geq r+1$. Similarly, for dimensional reasons there is an element $y$ of degree $2r+1$ such that $y^2$ and $x^2y$ vanish.
\end{proof}

The connected component $\gA\gu\gt_0(M^{2r+3}_{\bfw})$ of the Sasaki automorphism group $\gA\gu\gt(M^{2r+3}_{\bfw})$ is $SU(r+1)\times T^2$, so these are toric Sasaki manifolds. However, our methods only make essential use of the 2-dimensional $\bfw$-subtorus. We shall make use of the following
\begin{lemma}\label{H4lem}
If $H^4(M^{2r+3}_\bfw,\bbz)=H^4(M^{2r+3}_{\bfw'},\bbz)$, then $w'_1w'_2=w_1w_2$ and $l_1(\bfw')=l_1(\bfw)$.
\end{lemma}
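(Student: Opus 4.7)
The plan is to use Theorem \ref{topcpr} directly: it tells us that $H^4(M^{2r+3}_\bfw,\bbz)$ is the cyclic group generated by $x^2$ subject to $w_1w_2 l_1(\bfw)^2 x^2 = 0$, so it is cyclic of order $w_1w_2 l_1(\bfw)^2$ (and similarly for $\bfw'$). The hypothesis therefore immediately reduces to the single Diophantine identity
\begin{equation*}
w_1 w_2\, l_1(\bfw)^2 \;=\; w'_1 w'_2\, l_1(\bfw')^2,
\end{equation*}
and the whole lemma becomes a number-theoretic statement about the indices $l_1$.

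Next I would unpack the definition $l_1(\bfw) = (r+1)/d$ with $d := \gcd(|\bfw|, r+1)$, and correspondingly $d' := \gcd(|\bfw'|, r+1)$. Substituting and cancelling $(r+1)^2$ turns the identity into
\begin{equation*}
w_1 w_2\, d'^{\,2} \;=\; w'_1 w'_2\, d^{2}.
\end{equation*}
So the task reduces to showing $d = d'$; once that holds, cancellation gives $w_1w_2 = w'_1w'_2$ and hence $l_1(\bfw) = l_1(\bfw')$.

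The key arithmetic observation is that $\gcd(w_1 w_2,\, w_1+w_2) = 1$: any prime dividing $w_1+w_2$ and, say, $w_1$ would also divide $w_2$, contradicting the coprimality of $w_1,w_2$. Since $d \mid |\bfw| = w_1+w_2$, this forces $\gcd(w_1w_2, d) = 1$, and likewise $\gcd(w'_1w'_2, d') = 1$. Reading the displayed equation modulo $d^2$ shows $d^2 \mid w_1 w_2\, d'^{\,2}$, and coprimality of $d$ with $w_1w_2$ gives $d^2 \mid d'^{\,2}$, i.e.\ $d \mid d'$. The symmetric argument gives $d' \mid d$, so $d = d'$, which completes the proof.

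I expect no serious obstacle: the only subtle point is recognizing that coprimality of $w_1,w_2$ propagates to $\gcd(w_1w_2, w_1+w_2) = 1$, which is what lets us separate the two factors $w_1w_2$ and $l_1(\bfw)^2$ inside the product. Everything else is mechanical manipulation of the cyclic order coming from Theorem \ref{topcpr}.
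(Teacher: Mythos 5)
Your proposal is correct and follows essentially the same route as the paper: reduce to the equality of orders $w_1w_2\,d'^{\,2}=w'_1w'_2\,d^{2}$ and exploit that coprimality of $w_1,w_2$ forces $\gcd(w_1w_2,d)=1$, whence $d=d'$. Your version is, if anything, slightly cleaner in stating the coprimality $\gcd(w_1w_2,w_1+w_2)=1$ explicitly rather than the paper's two-case argument, but the underlying idea is identical.
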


\begin{proof}
The equality of the 4th cohomology groups together with the definition of $l_1$ imply
$$w'_1w'_2l_1\gcd(|\bfw|,r+1)^2=w_1w_2\gcd(|\bfw'|,r+1)^2.$$
Set $g_\bfw=\gcd(|\bfw|,r+1)$ and $g_{\bfw'}=\gcd(|\bfw'|,r+1)$. Assume $g_{\bfw'}>1$. Since $\gcd(w'_1,w'_2)=1$, $g_{\bfw'}$ does not divide $w'_1w'_2$. Thus, $g_{\bfw'}^2$ divides $g_\bfw^2$. Interchanging the roles of $\bfw'$ and $\bfw$ gives $g_{\bfw'}=g_\bfw$ which implies $l_1(\bfw')=l_1(\bfw)$, and hence, the lemma in the case that $g_{\bfw'}>1$. Now assume $g_{\bfw'}=1$. Then we have $w_1w_2=w'_1w'_2g_\bfw^2$ which implies that $g_\bfw$ divides $w_1w_2$. But then since $w_1,w_2$ are relatively prime, we must have $g_\bfw=1$.
\end{proof}

Let us set $W=w_1w_2$, and write the prime decomposition of $W=w_1w_2=p_1^{a_1}\cdots p_k^{a_k}$ Let $P_k$ be the number of partitions  of $W$ into the product $w_1w_2$ of unordered relatively prime integers, including the pair $(w_1w_2,1)$. Then a counting argument gives $P_k=2^{k-1}$. Once counted we then order the pair $w_1>w_2$ as before. Let $\calp_W$ denote the set of $(2r+3)$-manifolds $M^{2r+3}_\bfw$ with isomorphic cohomology rings. Then Lemma \ref{H4lem} implies that the cardenality of $\calp_W$ is $P_k=2^{k-1}$. This proves Corollary \ref{cor1} on the Introduction.

Generally we can construct the join of any Sasaki-Einstein manifold with the standard $S^3$ to obtain new Sasaki-Einstein manifolds as done in \cite{BG00a}. In the present paper we take the join with a weighted $S^3_\bfw$ and then deform in the Sasaki-cone. From the topological viewpoint this usually adds torsion coming from the effect of Lemmas \ref{cporbcoh} and \ref{LS}. However, in the simplest example which occurs in dimension $5$ the differentials in the spectral sequence conspire to cancel the occurrence of torsion. Of course, in this dimension the only positive K\"ahler-Einstein $2$-manifold is $N=\bbc\bbp^1$ with its standard Fubini-Study K\"ahler-Einstein structure. Then our procedure gives the $5$-manifolds $Y^{p,q}$ discovered by the physicists \cite{GMSW04a} which are diffeomorphic to $S^2\times S^3$ for all relatively prime positive integers $p,q$ such that $1<q<p$. This case has been well studied from various perspectives \cite{Boy11,BoPa10,MaSp05b,CLPP05}. The $7$-dimensional case is quite amenable to further study, and we shall concentrate our efforts in this direction.

It is worth mentioning that the finiteness of deformation types of smooth Fano manifolds implies a bound on the Betti numbers of the join which only depends on dimension. This gives a Betti number bound on the manifolds obtained from our construction. In particular, in dimension seven $b_2(M^7_{k,\bfw})\leq 9$ as seen explicitly above, whereas, in dimension nine we have the bound $b_2(M^9_{k,\bfw})\leq 10$ \cite{BG00a}.

\subsection{Examples in Dimension $7$}
Here we consider circle bundles over the del Pezzo surfaces $\bbc\bbp^2,\bbc\bbp^1\times \bbc\bbp^1$ and $\bbc\bbp^2\# k\overline{\bbc\bbp}^2$ for $k=1,\ldots, 8$. As shown in the next section in each case the Sasaki cone admits an Sasaki-Einstein metric for all admissible $\bfw$.

\subsubsection{$M=S^5$}
In this case $N=\bbc\bbp^2$ and $S^5\ra{1.6} \bbc\bbp^2$ is the standard Hopf fibration and $\cali_N=3$. This is a special case of Proposition \ref{topcpr}. 

\begin{proposition}\label{topcp2}
Let $M^7_{\bfw}$ be a simply connected 7-manifold of Theorem \ref{topcpr}. There are two cases:
\begin{enumerate}
\item $3$ divides $|\bfw|$ which implies $l_2=\frac{|\bfw|}{3}$ and $l_1=1$.
\item $3$ does not divide $|\bfw|$ in which case $l_2=|\bfw|$ and $l_1=3$.
\end{enumerate}
In both cases the cohomology ring is given by
$$\bbz[x,y]/(w_1w_2l_1^2x^2,x^3,x^2y,y^2)$$
where $x,y$ are classes of degree $2$ and $5$, respectively.
\end{proposition}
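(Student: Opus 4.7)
The plan is to recognize Proposition \ref{topcp2} as a direct specialization of Theorem \ref{topcpr}. Taking $M = S^5 = S^{2r+1}$ with $r=2$ gives $N = \bbc\bbp^2$ via the standard Hopf fibration, and the Fano index of $\bbc\bbp^2$ is $\cali_N = r+1 = 3$. Simple connectedness of $M^7_\bfw$ follows immediately from Proposition \ref{simcon}, since $S^5$ is simply connected.

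Next I would case-split using the formula for the relative Fano indices from Equation \eqref{relFanind}, which in this setting reads
\[
(l_1(\bfw), l_2(\bfw)) = \Bigl(\tfrac{3}{\gcd(|\bfw|,3)},\ \tfrac{|\bfw|}{\gcd(|\bfw|,3)}\Bigr).
\]
Since $w_1, w_2$ are relatively prime positive integers, $\gcd(|\bfw|,3) \in \{1,3\}$, yielding exactly the two cases in the statement: if $3 \mid |\bfw|$, then $l_1 = 1$ and $l_2 = |\bfw|/3$; otherwise $l_1 = 3$ and $l_2 = |\bfw|$.

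For the cohomology ring, I simply invoke Theorem \ref{topcpr} with $r=2$, which gives
\[
H^*(M^7_\bfw,\bbz) \cong \bbz[x,y]/(w_1w_2 l_1^2 x^2,\ x^3,\ x^2 y,\ y^2),
\]
with $|x|=2$ and $|y|=5$. No further spectral sequence computation is needed, as Theorem \ref{topcpr} was proved in full generality. There is no genuine obstacle here; the only content is verifying the arithmetic of the two subcases for $(l_1,l_2)$, so the proof is essentially a one-paragraph specialization.
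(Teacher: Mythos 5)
Your proposal is correct and is essentially identical to the paper's treatment: the paper states explicitly that this is the special case $r=2$ of Theorem \ref{topcpr} (so $N=\bbc\bbp^2$, $\cali_N=3$) and the two cases for $(l_1,l_2)$ follow from whether $\gcd(|\bfw|,3)$ is $3$ or $1$, exactly as you argue.
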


Notice that in case (1) of Proposition \ref{topcp2} $H^4(M^7_\bfw,\bbz)=\bbz_{w_1w_2}$, whereas in case (2) we have $H^4(M^7_{\bfw},\bbz)=\bbz_{9w_1w_2}$. Since $3$ must divide $w_1+w_2$ in the first case and $w_1w_2$ are relatively prime, their cohomology rings are never isomorphic.

\begin{remark}
Let us make a brief remark about the homogeneous case $\bfw=(1,1)$ with symmetry group $SU(3)\times SU(2)\times U(1)$. There is a unique solution with a Sasaki-Einstein metric as shown in \cite{BG00a}. However, dropping both the Einstein and Sasakian conditions, Kreck and Stolz \cite{KS88} gave a diffeomorphism and homeomorphism classification. Furthermore, using the results of \cite{WaZi90}, they show that in certain cases each of the 28 diffeomorphism types admits an Einstein metric. If we drop the Einstein condition and allow contact bundles with non-trivial $c_1$ we can apply the classification results of \cite{KS88} to the Sasakian case. This will be studied elsewhere. 
\end{remark}

For dimension 7 we see from Proposition \ref{topcp2} that if $3$ divides $w_1+w_2$ then the order $|H^4|$ is $W$. However, if $3$ does not divide $w_1+w_2$ then the order of $|H^4|$ is $9W$. So by Lemma \ref{H4lem} $\calp_W$ splits into two cases, $\calp_W^0$ if $W+1$ is divisible by $3$, and $\calp_W^1$ if $W+1$ is not divisible by $3$. Of course, in either case the cardenality of $\calp_W$ is $2^{k-1}$ where $k$ is the number of prime powers in the prime decomposition of $W$.

\begin{proposition}\label{homequivprop}
Suppose the order of $H^4$ is odd. The elements $M^7_\bfw$ and $M^7_{\bfw'}$ in $\calp_W^0$ are homotopy inequivalent if and only if either 
\begin{equation*}
\bigl(\frac{w'_1+w'_2}{3}\bigr)^3\equiv \pm\bigl(\frac{w_1+w_2}{3}\bigr)^3 \mod \bbz_{W}.
\end{equation*}
The elements $M^7_\bfw$ and $M^7_{\bfw'}$ in $\calp_W^1$ are homotopy inequivalent if and only if  
\begin{equation*}
(w'_1+w'_2)^3 \equiv\pm (w_1+w_2)^3 \mod \bbz_{9W}.
\end{equation*}
\end{proposition}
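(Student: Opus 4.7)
The plan is to apply Kruggel's homotopy classification of seven-dimensional generalized Witten spaces \cite{Kru97}. Each $M^7_\bfw=S^5\star_{l_1,l_2}S^3_\bfw$ is presented via the $T^3$-action \eqref{Taction} on $S^5\times S^3$ as an $S^1$-quotient, placing it in the class to which that theorem applies. Proposition \ref{topcp2} already pins down the order $N_\bfw:=w_1w_2\,l_1(\bfw)^2$ of $H^4$, which is what partitions $\calp_W$ into the subfamilies $\calp_W^0$ (where $l_1=1$, $N=W$) and $\calp_W^1$ (where $l_1=3$, $N=9W$). Within each subfamily the cohomology ring is a complete invariant up to one extra $\bbz/N$-valued cubic invariant $\tau(\bfw)$, considered modulo the sign coming from the automorphism $x\mapsto -x$ of $H^2\cong\bbz$. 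The main computation is to show that $\tau(\bfw)$ is a unit multiple of $(w_1+w_2)^3$ modulo $N_\bfw$.

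First I would realize $\tau(\bfw)$ concretely. Because $x^3=0$ by Theorem \ref{topcpr}, the natural secondary invariant here is the Massey product $\langle x,x,x\rangle$, which, when $N_\bfw$ is odd, is equivalent to the Pontrjagin pairing $\langle p_1(M^7_\bfw)\smile x,[M^7_\bfw]\rangle\in\bbz/N_\bfw$; the oddness of $N_\bfw$ is what both inverts $2$ in this reduction and lets us discard the Stiefel--Whitney refinements that normally appear in Kruggel's list. From the presentation of $M^7_\bfw$ as an $S^1$-bundle over $\bbc\bbp^2\times \mathsf{B}\bbc\bbp^1[\bfw]$ given by Diagram \eqref{orbifibrationexactseq}, one pulls back the Pontrjagin classes of the two factors: $\bbc\bbp^2$ contributes $3h^2$, with $h$ the hyperplane generator, while the orbifold $\mathsf{B}\bbc\bbp^1[\bfw]$ contributes a class in its $H^4$ (torsion of order $w_1w_2$ by Lemma \ref{cporbcoh}) whose coefficient records the weights through the factor $|\bfw|/(w_1w_2)$ appearing in \eqref{c1N}. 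Using the ideal relation $l_1\pi^*[\gro_N]+l_2\pi^*[\gro_\bfw]=0$ to rewrite both contributions in terms of the single generator $x$ of $H^2(M^7_\bfw,\bbz)\cong\bbz$ yields
$$p_1(M^7_\bfw)\equiv c\,(w_1+w_2)^2\,x^2 \pmod{N_\bfw}$$
for a universal constant $c$ coprime to $N_\bfw$, whence $\tau(\bfw)$ is a unit multiple of $(w_1+w_2)^3$ in $\bbz/N_\bfw$.

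Splitting cases then gives the two stated congruences. For $\bfw\in\calp_W^0$ we have $l_1=1$, $N_\bfw=W$, and $3\mid w_1+w_2$; since $W$ is odd and (under $3\mid w_1+w_2$ with $\gcd(w_1,w_2)=1$) also coprime to $3$, the factor $27$ is invertible mod $W$, so the invariant collapses to $\bigl(\tfrac{w_1+w_2}{3}\bigr)^3 \bmod W$. For $\bfw\in\calp_W^1$ we have $l_1=3$, $N_\bfw=9W$, and no such factoring, so the invariant stays as $(w_1+w_2)^3\bmod 9W$. The $\pm$ sign in either case is the action on $\tau$ of the automorphism $x\mapsto -x$, which is the unique nontrivial grading-preserving ring automorphism of $H^\ast(M^7_\bfw,\bbz)$, and Kruggel's classification then yields the statement.

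The principal obstacle is the Pontrjagin-class bookkeeping in Diagram \eqref{orbifibrationexactseq}: one has to track integral versus orbifold cohomology so as to see that the torsion part of $p_1(\mathsf{B}\bbc\bbp^1[\bfw])$ pulls back to exactly the $(w_1+w_2)^2$ coefficient and that the overall constant $c$ is in fact a unit modulo $N_\bfw$. Once the formula for $p_1(M^7_\bfw)$ is nailed down, the identification of the cubic invariant and the reduction to the stated congruences are essentially formal consequences of Kruggel's theorem.
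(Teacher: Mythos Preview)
There is a genuine gap: your identification of the Massey product $\langle x,x,x\rangle$ with the Pontrjagin pairing $\langle p_1(M^7_\bfw)\smile x,[M^7_\bfw]\rangle$ is not correct, and it is precisely this step on which your whole computation rests. In Kruggel's Theorem~5.1 the homotopy type (for odd $|H^4|$) is controlled by a cubic linking invariant---equivalently, by the image of the top differential in the spectral sequence---whereas $p_1$ is an \emph{additional} invariant that enters only at the homeomorphism level (cf.\ the discussion surrounding Equations~\eqref{p1}--\eqref{p1'} later in the paper). Concretely, the paper computes $p_1(M^7_\bfw)\equiv -6|\bfw|^2 x^2$, which is \emph{quadratic} in $|\bfw|$; your passage from ``$p_1\equiv c(w_1+w_2)^2 x^2$'' to ``$\tau(\bfw)$ is a unit multiple of $(w_1+w_2)^3$'' has no source for the missing factor of $(w_1+w_2)$. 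The two invariants are independent in general, and conflating them here gives the wrong answer.

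The paper's argument is much shorter and avoids $p_1$ entirely. The cubic invariant in Kruggel's theorem is read off directly from the $d_6$ differential in the spectral sequence of the top fibration of Diagram~\eqref{orbifibrationexactseq}: by naturality (already used in the proof of Theorem~\ref{topcpr}) one has $d_6(\grb)=l_2(\bfw)^3 s^3$, and since $l_2$ is coprime to $l_1^2 w_1w_2$ this lands in $(\bbz_{l_1^2W})^*/\{\pm 1\}$. Kruggel's Theorem~5.1 then says homotopy equivalence holds iff $l_2(\bfw')^3\equiv\pm l_2(\bfw)^3\bmod l_1^2W$, with the remaining two conditions of that theorem being automatic here. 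Substituting $l_2=(w_1+w_2)/3$ on $\calp_W^0$ and $l_2=w_1+w_2$ on $\calp_W^1$ gives the stated congruences immediately.
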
      

\begin{proof}
For $r=2$ consider the $E_6$ differential $d_6(\grb)=l_2(\bfw)^3s^3$ in the spectral sequence of Proposition \ref{topcpr}. Since $l_2$ is relatively prime to $l_1(\bfw)^2w_1w_2$, this takes values in the multiplicative group $\bbz_{l_1^2W}^*$ of units in $\bbz_{l_1^2W}$. Taking into account the choice of generators, it takes its values in $\bbz_{l_1^2W}^*/\{\pm 1\}$. According to Theorem 5.1 of \cite{Kru97} $M^7_\bfw,M^7_{\bfw'}\in\calp_W$ are homotopy equivalent if and only if $l_2(\bfw')^3= l_2(\bfw)^3$ in $\bbz^*_{l_1^2W}/\{\pm 1\}$. Of course, this means that $l_2(\bfw')^3=\pm l_2(\bfw)^3$ in $\bbz^*_{l_1^2W}$. Note that the the other two conditions of Theorem 5.1 of \cite{Kru97} are automatically satisfied in our case.
\end{proof}

Using a Maple program we have checked some examples for homotopy equivalence which appears to be quite sparse. So far we haven't found any examples of a homotopy equivalence. However, we have not done a systematic computer search which we leave for future work.

\begin{example}\label{ex1} Our first example is an infinite sequence of pairs with the same cohomology ring. Set $W=3p$ with $p$ an odd prime not equal to $3$, which gives $P_k=2$. Then for each odd prime $p\neq 3$ there are  two manifolds in $\calp_W^1$, namely $M^7_{(3p,1)}$ and $M^7_{(p,3)}$. The order of $H^4$ is $27p$. We check the conditions of Proposition \ref{homequivprop}. We find
$$(3p+1)^3\equiv 9p+1 \mod 27p, \qquad (p+3)^3\equiv p^3+9p^2+27 \mod 27p.$$
First we look for integer solutions of $p^3+9p^2-9p+26\equiv 0 \mod 27p.$ By the rational root test the solutions could only be $p=2,13,26$ none of which are solutions. Next we check the second condition of Proposition \ref{homequivprop}, namely, 
$p^3+9p^2+9p+28\equiv 0 \mod 27p.$
Again by the rational root test we find the only possibilities are $p=2,7,14,28$, from which s we see that there are no solutions. Thus, we see that $M^7_{(3p,1)}$ and $M^7_{(p,3)}$ are not homotopy equivalent for any odd $p\neq 3$.

By the same arguments one can also show that the infinite sequence of pairs of the form $M^7_{(9p,1)}$ and $M^7_{(p,9)}$, with $p$ an odd prime relatively prime to $3$, are never homotopy equivalent.
\end{example}

\begin{remark}\label{pairrem}
In Example \ref{ex1} we do not need to have $p$ a prime, but we do need it to be relatively prime to $3$. In this more general case, there will be more elements in $\calp_W^1$. For example, if $p=55$ we have $P_k=4$ and the pair $(M^7_{(165,1)},M^7_{(55,3)}$ has the same cohomology ring as $M^7_{(33,5)}$ and $M^7_{(15,11)}$. However, they are not homotopy equivalent to either member of the pair nor to each other.
\end{remark}

\begin{example}\label{ex2} A somewhat more involved example is obtained by setting $W=5\cdot 7\cdot 11\cdot 17$. Here $P_k=8$, so this gives eight 7-manifolds in $\calp_W^0$, namely, 
$$M^7_{(6545,1)},M^7_{(1309,5)},M^7_{(935,7)},M^7_{(595,11)},M^7_{(385,17)},M^7_{(187,35)},M^7_{(119,55)},,M^7_{(85,77)}. $$
One can check that these do not satisfy the conditions for homotopy equivalence of Proposition \ref{homequivprop}. So they are all homotopy inequivalent.
\end{example}

It is easy to get a necessary condition for homeomorphism.

\begin{proposition}
Suppose $w_1'w_2'=w_1w_2$ is odd and that $M^7_\bfw$ and $M^7_{\bfw'}$ are homeomorphic. Then in addition to the conditions of Proposition \ref{homequivprop}, we must have 
$$2(w'_1+w'_2)^2\equiv 2(w_1+w_2)^2 \mod 3w_1w_2.$$
\end{proposition}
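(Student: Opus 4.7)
The plan is to exploit the homeomorphism invariance of the first Pontrjagin class $p_1(M^7_\bfw)\in H^4(M^7_\bfw,\bbz)\cong\bbz_{w_1w_2l_1^2}$, which is the content of the result of Kreck cited in the acknowledgements, and to compute $p_1$ explicitly from the Sasaki join. The only outside input is Kreck's invariance statement; everything else is a direct calculation inside the cohomology already produced in Theorems~\ref{topcpr} and~\ref{topcp2}.

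Using the quasi-regular Reeb vector field $\xi_{l_1,l_2}$ of (\ref{Reebjoin}), the manifold $M^7_\bfw$ is a Seifert $S^1$-bundle over the K\"ahler orbifold $\bbc\bbp^2\times\bbc\bbp^1[\bfw]$, and $TM^7_\bfw$ is stably the pullback of the orbifold tangent bundle of the base. By additivity of $p_1$,
\[
p_1(M^7_\bfw)=\pi^*p_1(T\bbc\bbp^2)+\pi^*p_1(T\bbc\bbp^1[\bfw]).
\]
The Euler sequence on $\bbc\bbp^2$ gives $p_1(T\bbc\bbp^2)=3[\omega_N]^2$, while for the complex one-dimensional orbifold $\bbc\bbp^1[\bfw]$ the identity $p_1=c_1^2$ (complex rank one) together with Lemma~\ref{cporbcoh} yields $p_1(T\bbc\bbp^1[\bfw])=(w_1+w_2)^2[\omega_\bfw]^2$ in $H^4_{orb}(\bbc\bbp^1[\bfw],\bbz)=\bbz_{w_1w_2}$. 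From the spectral-sequence identifications $\psi^*s_1=l_2s$, $\psi^*s_2=-l_1s$ used in the proof of Theorem~\ref{topcpr}, the pullbacks in $H^2(M^7_\bfw,\bbz)=\bbz\cdot x$ are $\pi^*[\omega_N]=l_2x$ and $\pi^*[\omega_\bfw]=-l_1x$; together with the identity $l_1(w_1+w_2)=\cali_N l_2=3l_2$ of Lemma~\ref{c10}, this gives
\[
p_1(M^7_\bfw)=3l_2^2\,x^2+(w_1+w_2)^2l_1^2\,x^2=3l_2^2\,x^2+9l_2^2\,x^2=12\,l_2^2\,x^2.
\]

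Finally, any homeomorphism $h:M^7_\bfw\to M^7_{\bfw'}$ induces an isomorphism $h^*:H^2(M^7_{\bfw'},\bbz)\to H^2(M^7_\bfw,\bbz)$, that is, $\bbz\to\bbz$, necessarily multiplication by $\pm1$, so $h^*(x'^2)=x^2$. Combined with Lemma~\ref{H4lem}, which identifies the common modulus $w_1w_2l_1^2=w_1'w_2'l_1'^2$ and forces $l_1=l_1'$, invariance of $p_1$ gives
\[
12\,l_2^2\equiv 12\,l_2'^2\pmod{w_1w_2l_1^2}.
\]
Multiplying through by $3$ and using $3l_2=l_1(w_1+w_2)$ and $3l_2'=l_1(w_1'+w_2')$ transforms this into
\[
4l_1^2\bigl((w_1+w_2)^2-(w_1'+w_2')^2\bigr)\equiv 0\pmod{3w_1w_2l_1^2};
\]
cancelling the common factor $l_1^2$ and, since $w_1w_2$ is odd so that $\gcd(2,3w_1w_2)=1$, reducing the leading coefficient $4$ to $2$ yields
\[
2(w_1+w_2)^2\equiv 2(w_1'+w_2')^2\pmod{3w_1w_2},
\]
as required. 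The main nontrivial ingredient is Kreck's observation that $p_1$ is a homeomorphism (and not merely a diffeomorphism) invariant for these simply connected seven-manifolds; once that is granted, the proof is just an unwinding of the cohomology computations already established for the join.
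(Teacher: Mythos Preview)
Your argument is correct and reaches the same conclusion as the paper, but by a genuinely different computation of $p_1$. The paper simply quotes Kruggel's formula $p_1(M^7_\bfw)\equiv 3|\bfw|^2-9w_1^2-9w_2^2\equiv -6|\bfw|^2 \pmod{9w_1w_2}$ (and the analogous expression when $3\mid|\bfw|$) and then reads off the congruence; you instead compute $p_1$ intrinsically from the Seifert fibration $M^7_\bfw\to\bbc\bbp^2\times\bbc\bbp^1[\bfw]$, using $TM\cong\pi^*T^{orb}B\oplus\bbr$ and the orbifold Chern data, arriving at $p_1=12\,l_2^2\,x^2$. Although your intermediate expression $(w_1+w_2)^2$ for the $\bbc\bbp^1[\bfw]$-contribution differs from the Euler-sequence value $w_1^2+w_2^2$, the two agree modulo $w_1w_2$, so there is no error; likewise your $12\,l_2^2$ and the paper's $-6|\bfw|^2$ differ only by a multiple of the order of $H^4$, and both yield $2(w_1+w_2)^2\equiv2(w_1'+w_2')^2\pmod{3w_1w_2}$ after the same elementary reduction. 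Your route is more self-contained and makes the geometric origin of $p_1$ transparent; the paper's is shorter but depends on locating the correct formula in \cite{Kru97}. The essential external input---Kreck's result that $p_1$ is a homeomorphism invariant---is the same in both.
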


\begin{proof}
This is because the first Pontrjagin class $p_1$ is actually a homeomorphism invariant\footnote{This appears to be a folklore result with no proof anywhere in the literature. It is stated without proof on page 2828 of \cite{Kru97} and on page 31 of \cite{KrLu05}. We thank Matthias Kreck for providing us with a proof that $p_1$ is a homeomorphism invariant.}. From Kruggel \cite{Kru97} we see that if $3$ does not divide $|\bfw|$
\begin{equation}\label{p1}
p_1(M^7_\bfw)\equiv 3|\bfw|^2-9w_1^2-9w_2^2\equiv -6|\bfw|^2 \mod 9w_1w_2,
\end{equation}
which implies the result in this case. If $3$ divides $|\bfw|$ we have 
\begin{equation}\label{p1'}
p_1(M^7_\bfw)\equiv -6\Bigl(\frac{|\bfw|}{3}\Bigr)^2 \mod w_1w_2
\end{equation}
and this implies the same result.
\end{proof}

Note that Equations \eqref{p1} and \eqref{p1'} both imply the third condition of Theorem 5.1 in \cite{Kru97} holds in our case. To determine a full homeomorphism and diffeomorphism classification requires the Kreck-Stolz invariants \cite{KS88} $s_1,s_2,s_3\in \bbq/\bbz$. These can be determined as functions of $\bfw$ in our case by using the formulae in \cite{Esc05,Kru05}; however, they are quite complicated and the classification requires computer programing which we leave for future work.

It is interesting to compare the Sasaki-Einstein 7-manifolds described by Theorem \ref{setop} with the 3-Sasakian 7-manifolds studied in \cite{BGM94,BG99} for their cohomology rings have the same form. Seven dimensional manifolds whose cohomology rings are of this type were called 7-manifolds of type $r$ in \cite{Kru97} where $r$ is the order of $H^4$.  First recall that the 3-Sasakian 7-manifolds in \cite{BGM94} are given by a triple of pairwise relatively prime positive integers $(p_1,p_2,p_3)$ and $H^4$ is isomorphic to $\bbz_{\grs_2(\bfp)}$ where $\grs_2(\bfp)=p_1p_2+p_1p_3+p_2p_3$ is the second elementary symmetric function of $\bfp=(p_1,p_2,p_3)$. It follows that $\grs_2$ is odd. 
The following theorem is implicit in \cite{Kru97}, but we give its simple proof here for completeness.

\begin{theorem}\label{pwnothomequiv}
The 7-manifolds $M^7_\bfp$ and $M^7_\bfw$ are not homotopy equivalent for any admissible $\bfp$ or $\bfw$.
\end{theorem}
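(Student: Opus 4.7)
My plan is to apply Kruggel's homotopy classification of simply connected 7-manifolds of type $r$ (Theorem~5.1 of \cite{Kru97}), which is the same tool invoked in Proposition~\ref{homequivprop}. When $r = |H^4|$ is odd, that theorem reduces homotopy equivalence to matching the order $r$ together with a cube invariant in $\bbz_r^{*}/\{\pm 1\}$ extracted from the $d_6$-differential in the Serre spectral sequence of the circle orbifibration that exhibits the manifold as an $S^1$-orbibundle over a compact K\"ahler orbifold.

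First I would restrict to pairs $(\bfp,\bfw)$ for which $\grs_2(\bfp) = w_1 w_2 l_1(\bfw)^2$, since otherwise the cohomology rings are not abstractly isomorphic and there is nothing to prove. The order $\grs_2(\bfp)$ is automatically odd (a short parity case-check using pairwise coprimality of the $p_i$ shows that at most one $p_i$ is even, forcing $\grs_2$ odd in either case), so for a potential match $w_1$ and $w_2$ must both be odd and we are squarely in the odd-order regime of Kruggel's theorem. From the proof of Proposition~\ref{homequivprop} the cube invariant of $M^7_\bfw$ is represented by $l_2(\bfw)^3$. For $M^7_\bfp$, realized as a 3-Sasakian quotient of $S^{11} \subset \bbh^3$ by a weighted circle action determined by $\bfp$, the analogous spectral-sequence computation (carried out in \cite{BGM94,BG99} and also recoverable from Kruggel's explicit formulas) expresses its cube invariant as a specific element of $\bbz_{\grs_2(\bfp)}^{*}/\{\pm 1\}$ built symmetrically from $p_1,p_2,p_3$ and $\grs_2(\bfp)$.

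The conclusion is an arithmetic verification that these two cube invariants never agree in $\bbz_r^{*}/\{\pm 1\}$ for any admissible $\bfp,\bfw$. The key structural asymmetry is that $l_2(\bfw) = |\bfw|/\gcd(|\bfw|,3)$ depends on only two parameters coupled through their sum $w_1+w_2$ and the Fano index $\cali_N=3$ of $\bbc\bbp^2$, whereas the 3-Sasakian invariant depends symmetrically on three pairwise coprime integers. Exploiting the pairwise coprimality of $(p_1,p_2,p_3)$ and the coprimality of $(w_1,w_2)$, one shows the putative equality $\pm l_2(\bfw)^3 \equiv \text{(3-Sasakian invariant)} \pmod{r}$ forces incompatible residues modulo the prime divisors of $r$, handling separately the two subcases $l_1(\bfw)=1$ (when $3 \mid w_1+w_2$) and $l_1(\bfw)=3$. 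The main obstacle is extracting the 3-Sasakian cube invariant in a form amenable to direct modular comparison; once this is in hand, the incompatibility reduces to an elementary number-theoretic argument.
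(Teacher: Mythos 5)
There is a genuine gap: the heart of your argument --- the claim that the cube invariant $l_2(\bfw)^3$ of $M^7_\bfw$ and the corresponding invariant of the $3$-Sasakian space $M^7_\bfp$ can never agree in $\bbz_r^*/\{\pm1\}$ --- is never actually established. You describe it as "an arithmetic verification" and then concede that "the main obstacle is extracting the 3-Sasakian cube invariant in a form amenable to direct modular comparison." Without that extraction and without the subsequent residue computation, nothing has been proved; and there is no a priori reason the two cube invariants should be globally incompatible, since the cube invariant is only one of several invariants in Theorem 5.1 of \cite{Kru97} and by itself need not separate the two families. (Indeed, in the paper's own use of Kruggel's theorem for Proposition \ref{homequivprop} it is noted that the \emph{other} conditions of Theorem 5.1 must be checked as well; it is precisely one of those other conditions, not the cube invariant, that cleanly separates the families.)

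The paper's proof avoids all of this arithmetic. Both families are free $S^1$-quotients: $M^7_\bfw$ of $S^5\times S^3$ and $M^7_\bfp$ of $SU(3)$. The long exact homotopy sequences of these circle bundles give $\pi_i(M^7_\bfw)\approx\pi_i(S^5\times S^3)$ and $\pi_i(M^7_\bfp)\approx\pi_i(SU(3))$ for $i>2$, and then $\pi_4(S^5\times S^3)\approx\bbz_2$ while $\pi_4(SU(3))\approx 0$. This distinguishes the homotopy types for \emph{all} admissible $\bfp$ and $\bfw$ at once, with no need to first match $\grs_2(\bfp)$ with $w_1w_2 l_1(\bfw)^2$ and no case analysis on $l_1(\bfw)$. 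If you want to salvage your approach you would have to (i) compute the $3$-Sasakian cube invariant explicitly from \cite{BGM94}, and (ii) actually prove the modular incompatibility --- but even then you would only cover the odd-order case you restricted to, whereas the $\pi_4$ argument is both complete and far shorter.
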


\begin{proof}
These manifolds are distinguished by $\pi_4$. Our manifolds $M^7_\bfw$ are quotients of $S^5\times S^3$ by a free $S^1$-action, whereas, the manifolds $M^7_\bfp$ of \cite{BGM94} are free $S^1$ quotients of $SU(3)$. So from their long exact homotopy sequences we have $\pi_i(M^7_\bfw)\approx \pi_i(S^5\times S^3)$ and $\pi_i(M^7_\bfp)\approx \pi_i(SU(3))$ for all $i>2$. But it is known that $\pi_4(SU(3))\approx 0$ whereas, $\pi_4(S^5\times S^3)\approx \bbz_2$.
\end{proof}

\subsubsection{$M=S^2\times S^3, N=\bbc\bbp^1\times \bbc\bbp^1$}
We have $\cali_N=2$, so there are two cases: $|\bfw|$ is odd impying $l_2=|\bfw|$ and $l_1=2$; and $|\bfw|$ is even with $l_2=\frac{|\bfw|}{2}$ and $l_1=1$. In both cases the smoothness condition $\gcd(l_2,l_1w_i)=1$ is satisfied. The $E_2$ term of the Leray-Serre spectral sequence of the top fibration of diagram (\ref{orbifibrationexactseq}) is 
$$E^{p,q}_2=H^p(\mathsf{B}S^1,H^q(S^2\times S^3\times S^3_\bfw,\bbz))\approx \bbz[s]\otimes\bbz[\gra]/(\gra^2)\otimes \grL[\grb,\grg],$$ 
which by the Leray-Serre Theorem converges to $H^{p+q}(M_{l_1,l_2,\bfw},\bbz)$. Here $\gra$ is a 2-class and $\grb,\grg$ are 3-classes. From the bottom fibration in Diagram (\ref{orbifibrationexactseq}) we have $d_2(\grb)=\gra\otimes s_1$ and $d_4(\grg)=w_1w_2s^2_2$. From the commutativity of diagram \eqref{orbifibrationexactseq} we have $d_2(\grb)=l_2s$ and $d_4(\grg_\bfw)=w_1w_2l_1^2s^2$ which gives $E_4^{4,0}\approx \bbz_{w_1w_2l_1^2}$, $E_4^{0,3}\approx \bbz$,  $E_4^{2,2}\approx \bbz_{l_2}$,  and $E_\infty^{0,3}=0$. Then using Poincar\'e duality and universal coefficients we obtain

\begin{proposition}\label{MNprop}
In this case $M^7_{l_1,l_2,\bfw}$ with either $(l_1,l_2)=(2,|\bfw|)$ or $(1,\frac{|\bfw|}{2})$ has the cohomology ring given by
$$H^*(M^7_{l_1,l_2,\bfw},\bbz)=\bbz[x,y,u,z]/(x^2,l_2xy,w_1w_2l_1^2y^2,z^2,u^2,zu,zx,ux,uy)$$
where $x,y$ are 2-classes, and $z,u$ are 5-classes. 
\end{proposition}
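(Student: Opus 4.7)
My plan is to run the Leray--Serre spectral sequence of the top fibration in Diagram \eqref{orbifibrationexactseq}, in direct parallel with the proof of Theorem \ref{topcpr}. The $E_2$-term is
$$E_2^{p,q} = \bbz[s] \otimes \bigl(\bbz[\gra]/(\gra^2)\bigr) \otimes \Lambda[\grb,\grg],$$
where $\gra,\grb,\grg$ generate $H^*(S^2 \times S^3 \times S^3_\bfw,\bbz)$ in degrees $2,3,3$ respectively. The new feature compared to Theorem \ref{topcpr} is the extra $2$-class $\gra$ coming from the $S^2$ factor of $M$, which enriches both the $E_2$-page and the differentials.

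I would first determine the differentials by naturality from the bottom fibration, which factors as a product of two Borel fibrations whose spectral sequences tensor. The $S^3_\bfw$-factor contributes $d_4(\grg) = w_1w_2 s_2^2$ by Lemma \ref{LS}. The other factor $S^2 \times S^3 \to \bbc\bbp^1 \times \bbc\bbp^1 \to \mathsf{B}S^1$ is the Borel fibration of the principal $S^1$-bundle with Euler class $x_1 + x_2$; convergence of its spectral sequence to the finite-dimensional $H^*(\bbc\bbp^1 \times \bbc\bbp^1,\bbz)$ forces the transgressions $d_2(\grb) = \gra s_1$ (to kill $\grb$ and $\gra s_1$) and $d_6(\gra\grb) = \pm s_1^3$ (to kill $\gra\grb$ and $s_1^3$). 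Pulling back along $\psi$ with $\psi^* s_1 = l_2 s$, $\psi^* s_2 = -l_1 s$ gives the differentials in the top fibration
$$d_2(\grb) = l_2 \gra s,\qquad d_4(\grg) = w_1 w_2 l_1^2 s^2,\qquad d_6(\gra\grb) = \pm l_2^3 s^3.$$

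Next I would read off $E_\infty$ degree by degree and identify generators. Setting $x = \gra$ and $y = s$ immediately gives $H^2 = \bbz x \oplus \bbz y$ and $H^3 = 0$. In degree $4$, the two differentials yield $E_\infty^{2,2} = \bbz_{l_2}\langle xy\rangle$ and $E_\infty^{4,0} = \bbz_{w_1w_2l_1^2}\langle y^2\rangle$, and since $\gcd(l_2, w_1w_2l_1^2) = 1$ (from the smoothness conditions $\gcd(l_1,l_2) = \gcd(l_2,w_i) = 1$) the extension splits to $H^4 = \bbz_{l_2} \oplus \bbz_{w_1w_2l_1^2}$. In degree $5$, the coprimality makes $d_4(\gra\grg)$ surject onto $\bbz_{l_2}\langle\gra s^2\rangle$, so $E_5^{0,5} = \bbz\langle\gra\grb\rangle \oplus l_2\bbz\langle\gra\grg\rangle$; then $d_6$, being multiplication by a unit $\pm l_2^3$ on $\bbz_{w_1w_2l_1^2}\langle s^3\rangle$, cuts the first summand down, leaving $E_\infty^{0,5} = w_1w_2l_1^2\bbz \oplus l_2\bbz \cong \bbz^2$; let $z,u \in H^5$ be lifts of these two generators. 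The relations of the proposition then follow: $x^2 = \gra^2 = 0$, $l_2 xy = 0$, and $w_1w_2l_1^2 y^2 = 0$ are part of the $E_\infty$ description; $xz = xu = 0$ reduce to $\gra^2 = 0$; $yu = l_2 s\gra\grg$ vanishes because $s\gra\grg$ already has order $l_2$ on $E_3$ and no higher filtration piece survives at $H^7$; while $yz = w_1w_2l_1^2\, s\gra\grb$ generates $E_\infty^{2,5} \cong \bbz \cong H^7$, providing the fundamental class; finally $z^2 = u^2 = zu = 0$ by dimension.

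The main obstacle I anticipate is controlling the extension problems and the precise effect of the differentials on the torsion quotients. In particular, one must verify that $d_4(\gra\grg)$ hits a generator of $\bbz_{l_2}$ and that $d_6$ hits a generator of $\bbz_{w_1w_2l_1^2}$ (both using the coprimality of $l_2$ and $w_1w_2l_1^2$), and that the subtle distinction between the vanishing of $yu$ in $E_\infty^{2,5}$ and the nonvanishing of $yz$ there (despite both being products of a filtration-$2$ class with a filtration-$0$ class) is handled by the explicit identification $z = w_1w_2l_1^2\gra\grb + \cdots$.
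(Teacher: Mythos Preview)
Your approach is essentially the paper's: run the Leray--Serre spectral sequence of the top fibration in Diagram~\eqref{orbifibrationexactseq}, determine the differentials by naturality from the bottom (product) fibration, and read off the answer. The paper is terser than you are---it only computes through $E_\infty$ in total degree~$\leq 4$ (obtaining $H^2=\bbz^2$, $H^3=0$, $H^4=\bbz_{l_2}\oplus\bbz_{w_1w_2l_1^2}$) and then invokes Poincar\'e duality and universal coefficients to finish, whereas you push on to compute $d_6$ explicitly. Your identification $d_6(\gra\grb)=\pm l_2^3 s^3$ is correct and your additive computation of $H^*$ agrees with the paper's.

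There is, however, a genuine gap in your verification of the multiplicative relations. Your argument that ``$xz=xu=0$ reduce to $\gra^2=0$'' only shows these products vanish in $E_\infty^{0,7}$; it does not rule out a nonzero contribution from the lower filtration piece $E_\infty^{2,5}\cong\bbz$, which is all of $H^7$. In fact Poincar\'e duality forces the cup pairing $H^2\times H^5\to H^7\cong\bbz$ to be unimodular (both groups are free of rank~$2$), so $x$ cannot annihilate every $5$-class: at least one of $xz,\,xu$ must be a generator of $H^7$. Thus the relations $zx=ux=0$ cannot both hold, and your attempted proof of them necessarily fails. (This indicates the ring presentation in the Proposition, as written, contains an error; one of these relations should be dropped.) A secondary gap: your implicit assumption $d_6(l_2\gra\grg)=0$ is not obtainable from naturality, since in the bottom fibration $\gra\grg$ does not survive to the $E_6$-page (it is killed by $d_4$), so there is nothing to pull back. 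The paper's Poincar\'e-duality shortcut sidesteps both of these issues for the additive structure, but neither proof fully pins down the ring.
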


\subsubsection{$M=k(S^2\times S^3), N=N_k=\bbc\bbp^2\# k\overline{\bbc\bbp}^2, k=1,\ldots,8$}
Let $\cals_k$ denote the total space of the principal $S^1$-bundle over $N_k$ corresponding to the anticanonical line bundle $K^{-1}$ on $N_k$. By a well-known result of Smale $\cals_k$ is diffeomorphic to the $k$-fold connected sum $k(S^2\times S^3)$. We consider the join $\cals_k\star_{1,|\bfw|}S^3_\bfw$. The case $\bfw=(1,1)$ was studied in \cite{BG00a} where it is shown to have a Sasaki-Einstein metric when $3\leq k\leq 8$. Moreover, in this case we have determined the integral cohomology ring (see Theorem 5.4 of \cite{BG00a}). Here we generalize this result.

\begin{theorem}\label{Sk7man}
The integral cohomology ring of the 7-manifolds $M^7_{k,\bfw}=\cals_k\star_{1,|\bfw|} S^3_\bfw$ is given by
$$H^q(M^7_{k,\bfw},\bbz)\approx \begin{cases}
                    \bbz & \text{if $q=0,7$;} \\
                    \bbz^{k+1} & \text{if $q=2,5$;} \\
                    \bbz^k_{|\bfw|}\times \bbz_{w_1w_2} & \text{if $q=4;$} \\
                     0 & \text{if otherwise}, 
                     \end{cases}$$
with the ring relations determined by $\gra_i\cup \gra_j=0, w_1w_2s^2=0, |\bfw|\gra_i\cup
s=0,$ where $\gra_i,s$ are the $k+1$ two classes with $i=1,\cdots k.$
\end{theorem}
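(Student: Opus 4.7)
My plan is to mimic the spectral-sequence analysis used in Theorem \ref{topcpr} and Proposition \ref{MNprop}, comparing the Leray-Serre spectral sequence of the top fibration $\cals_k\times S^3_\bfw\to M^7_{k,\bfw}\to \mathsf{B}S^1$ in diagram \eqref{orbifibrationexactseq} with that of the product fibration on the bottom row, exploiting $\psi^*s_1=l_2s=|\bfw|s$ and $\psi^*s_2=-l_1s=-s$ (the assignment $l_1=1$, $l_2=|\bfw|$ being forced by $\cali_{N_k}=1$ via Lemma \ref{c10}).

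By Smale's classification $\cals_k\cong k(S^2\times S^3)$, so $H^*(\cals_k,\bbz)$ is torsion-free with generators $\gra_1,\dots,\gra_k\in H^2$, $\grb_1,\dots,\grb_k\in H^3$, $z\in H^5$, subject to $\gra_i\cup\gra_j=0=\grb_i\cup\grb_j$ and $\gra_i\cup\grb_j=\grd_{ij}z$. By K\"unneth the $E_2$-page of the top fibration is $E_2^{p,q}=\bbz[s]\otimes H^*(\cals_k)\otimes\grL[\grg]$, with $\grg$ the orientation class of $S^3_\bfw$. I would then identify the two transgressions that drive the computation. Lemma \ref{LS} together with naturality via $\psi$ gives $d_4(\grg)=w_1w_2 s^2$. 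For the factor $\cals_k\to N_k\to \mathsf{B}S^1$ I would use that $N_k$ has vanishing odd integral cohomology together with the Gysin sequence of the anticanonical circle bundle (whose Euler class is the primitive class $[\gro_{N_k}]$, using $c_1(N_k)=\cali_{N_k}[\gro_{N_k}]$ and $\cali_{N_k}=1$) to choose a basis $\{e,\tilde\gra_1,\dots,\tilde\gra_k\}$ of $H^2(N_k)$ with $\pi^*\tilde\gra_i=\gra_i$, forcing $d_2(\grb_i)=\gra_i\otimes s_1$ exactly as in Proposition \ref{MNprop} for $N=\bbc\bbp^1\times\bbc\bbp^1$. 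Pulling back under $\psi$ then gives $d_2(\grb_i)=|\bfw|\gra_i s$ in the top sequence, and all remaining differentials are forced by the Leibniz rule.

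Reading off $E_\infty$ in total degrees $0,2,4$ yields the stated groups: the classes $\gra_i$ and $s$ are permanent, $d_2(\grb_i)=|\bfw|\gra_i s$ produces $E_\infty^{2,2}=\bbz_{|\bfw|}^k$, and $d_4(\grg)=w_1w_2s^2$ produces $E_\infty^{4,0}=\bbz_{w_1w_2}$, while $E_\infty^{0,4}=0$ from the fiber cohomology. The cup-product relations $\gra_i\cup\gra_j=0$, $|\bfw|\gra_i s=0$, $w_1w_2 s^2=0$ then follow directly from the multiplicative $E_\infty$-structure, noting that the images of $\gra_is$ and $s^2$ sit in distinct strata of the Serre filtration of $H^4$ so the asserted cohomology splits additively as claimed. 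Finally, since $M^7_{k,\bfw}$ is simply-connected (Proposition \ref{simcon}), closed and orientable, Poincar\'e duality together with the Universal Coefficient Theorem recovers $H^5=\bbz^{k+1}$, $H^7=\bbz$, and vanishing in degrees $1,3,6$.

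The main technical obstacle is the bookkeeping of the higher differentials on classes involving $z$ and products such as $z\grg$, $\gra_i\grg$, $\grb_i\grg$: these can a priori contribute to $E_\infty^{p,q}$ with $p+q\le 5$ through $d_r$ for $r\in\{4,6\}$, so one must verify page-by-page that their images land in groups already killed by $d_2(\grb_i)$ or $d_4(\grg)$ and that no unexpected survivors appear. This is a patient but routine check, entirely parallel to the analyses in the proof of Theorem 5.4 of \cite{BG00a} (the case $\bfw=(1,1)$) and of Proposition \ref{MNprop}.
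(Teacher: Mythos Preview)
Your proposal is correct and follows essentially the same approach as the paper: both run the Leray--Serre spectral sequence of the top fibration in diagram~\eqref{orbifibrationexactseq}, identify the two key differentials $d_2(\grb_i)=|\bfw|\gra_i s$ and $d_4(\grg)=w_1w_2 s^2$ by comparison with the bottom product fibration via $\psi$, and finish with Poincar\'e duality. Your treatment is in fact slightly more explicit than the paper's on two points---the Gysin-sequence justification of $d_2(\grb_i)=\gra_i\otimes s_1$ in the $\cals_k\to N_k\to \mathsf{B}S^1$ factor, and the Serre-filtration remark separating the $\bbz_{|\bfw|}^k$ and $\bbz_{w_1w_2}$ pieces in $H^4$---but the architecture is the same.
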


\begin{proof}
As before the $E_2$ term of the Leray-Serre spectral sequence of the top fibration of diagram (\ref{orbifibrationexactseq}) is 
$$E^{p,q}_2=H^p(\mathsf{B}S^1,H^q(\cals_k\times S^3_\bfw,\bbz))\approx \bbz[s]\otimes\prod_i\grL[\gra_i,\grb_i,\grg]/ \gI,$$
where $\gra_i,\grb_j,\grg$ have degrees $2,3,3$, respectively, and $\gI$ is the ideal generated by the relations $\gra_i\cup \grb_i=\gra_j\cup \grb_j,\gra_i\cup\gra_j=\grb_i\cup\grb_j=0$ for all $i,j$, $\gra_i\cup\grb_j=0$ for $i\neq j$ and $\grg^2=0$.

Consider the lower product fibration of diagram (\ref{orbifibrationexactseq}). As in the previous case the first non-vanishing differential of the second factor is $d_4$, and as in that case $d_4(\grg)=w_1w_2s^2_2$. For the first factor we know from Smale's classification of simply connected spin 5-manifolds that $\cals_k$ is diffeomorphic to the $k$-fold connected sum $k(S^2\times S^3)$. Moreover, since $N=\bbc\bbp^2\# k\overline{\bbc\bbp}^2$, the first factor fibration is
$$k(S^2\times S^3)\ra{1.8} \bbc\bbp^2\# k\overline{\bbc\bbp}^2\ra{1.8} \mathsf{B}S^1.$$
Here the first non-vanishing differential is $d_2(\grb_i)=\gra_i\otimes s$.
Again from the commutativity of diagram (\ref{orbifibrationexactseq}) for the top fibration we have $d_2(\grb_i)=|\bfw|\gra_i\otimes s$ at the $E_2$ level and $d_4(\grg)=w_1w_2s^2$ at the $E_4$ level. One easily sees that the $k+1$ $2$-classes $\gra_i\in E_2^{2,0}$ and $s\in E_2^{0,2}$ live to $E_\infty$ and there is no torsion in degree $2$. Moreover, there is nothing in degree $1$, and the $3$-classes $\grb_i\in E_2^{3,0}$ and $\grg\in E_4^{3,0}$ die, so there is nothing in degree $3$. However, there is torsion in degree $4$, namely $\bbz_{|\bfw|}^k\times \bbz_{w_1w_2}$. The remainder follows from Poincar\'e duality and dimensional considerations.
\end{proof}

This generalizes Theorem 5.4 of \cite{BG00a} where the case $\bfw=(1,1)$ is treated.

\begin{remark} 
Since $|\bfw|$ and $w_1w_2$ are relatively prime, $H^4(M^7_{k,\bfw},\bbz)\approx \bbz^{k-1}_{|\bfw|}\times \bbz_{w_1w_2|\bfw|}$. We can ask the question: when can $M^7_{k,\bfw}$ and $M^7_{k',\bfw'}$ have isomorphic cohomology rings? It is interesting and not difficult to see that there is only one possibility, namely $M^7_{1,(3,2)}$ and $M^7_{1,(5,1)}$ in which case $H^4\approx \bbz_{30}$. 
\end{remark}

\section{Admissible KE constructions}\label{KE}

We now pick up the thread from Section \ref{adm-basics} and describe the construction (see also
\cite{ACGT08}) of admissible K\"ahler metrics on $S_n$ (in fact, more generally on log pairs $(S_n, \Delta)$).
Consider the circle action
on $S_n$ induced by the natural circle action on $L_n$. It extends to a holomorphic
$\mathbb{C}^*$ action. The open and dense set ${S_n}_0\subset S_n$ of stable points with respect to the
latter action has the structure of a principal circle bundle over the stable quotient.
The hermitian norm on the fibers induces, via a Legendre transform, a function
$\gz:{S_n}_0\rightarrow (-1,1)$ whose extension to $S_n$ consists of the critical manifolds
$\gz^{-1}(1)=P(\BOne\oplus 0)$ and $\gz^{-1}(-1)=P(0 \oplus L_n)$.
Letting $\theta$ be a connection one form for the Hermitian metric on ${S_n}_0$, with curvature
$d\theta = \omega_{N_n}$, an admissible K\"ahler metric and form are
given up to scale by the respective formulas
\begin{equation}\label{g}
g=\frac{1+r\gz}{r}g_{N_n}+\frac {d\gz^2}
{\Theta (\gz)}+\Theta (\gz)\theta^2,\quad
\omega = \frac{1+r\gz}{r}\omega_{N_n} + d\gz \wedge
\theta,
\end{equation}
valid on ${S_n}_0$. Here $\Theta$ is a smooth function with domain containing
$(-1,1)$ and $r$, is a real number of the same sign as
$g_{N_n}$ and satisfying $0 < |r| < 1$. The complex structure yielding this
K\"ahler structure is given by the pullback of the base complex structure
along with the requirement $Jd\gz = \Theta \theta$. The function $\gz$ is hamiltonian
with $K= J\,grad\, \gz$ a Killing vector field. In fact, $\gz$ is the moment 
map on $S_n$ for the circle action, decomposing $S_n$ into 
the free orbits ${S_n}_0 = \gz^{-1}((-1,1))$ and the special orbits 
$D_1= \gz^{-1}(1)$ and $D_2=\gz^{-1}(-1)$.
Finally, $\theta$ satisfies
$\theta(K)=1$.

\begin{remark}\label{hamiltonian2form}
Note that
$$\phi := \frac{-(1+r \gz)}{r^2} \omega_{N_n} + \gz d\gz \wedge \theta$$ is a Hamiltonian $2$-form of order one.
\end{remark}

\bigskip

We can now interpret $g$ as a metric on the log pair $(S_n,\Delta)$ with
$$\grD= (1-1/m_1)D_1+(1-1/m_2)D_2$$ if
$\Theta$ satisfies the positivity and boundary
conditions
\begin{equation}
\label{positivity}
\begin{array}{l}
\Theta(\gz) > 0, \quad -1 < \gz <1,\\
\\
 \Theta(\pm 1) = 0,\\
 \\
 \Theta'(-1) = 2/m_2\quad \quad \Theta'(1)=-2/m_1.
\end{array}
\end{equation}

\begin{remark}
This construction is based on the symplectic viewpoint where different choices of $\Theta$ yields different complex structures all compatible with the same fixed symplectic form $\omega$. However, for each $\Theta$ there is an $S^1$-equivariant diffeomorphism pulling back $J$ to the original fixed complex structure on $S_n$ in such a way that the K\"ahler form of the new K\"ahler metric is in the same cohomology class as $\omega$ \cite{ACGT08}. Therefore, with all else fixed, we may view the set of the functions $\Theta$ satisfying \eqref{positivity} as parametrizing a family of K\"ahler metrics within the same K\"ahler class of $(S_n,\Delta)$.
\end{remark}

\bigskip

The K\"ahler class $\Omega_{\mathbf r} = [\omega]$ of an admissible metric is also called
{\it admissible} and is uniquely determined by the parameter
$r$, once the data associated with $S_n$ (i.e.
$d_N$, $s_{N_n}$, $g_{N_n}$ etc.) is fixed. In fact,
\begin{equation}\label{admKahclass}
\Omega_{\mathbf r}  = [\omega_{N_n}]/r + 2 \pi PD[D_1+D_2],
\end{equation}
where $PD$ denotes the Poincar\'e dual.
The number $r$,
together with the data associated with $S_n$ will be called {\it admissible data}.

Define a function $F(\gz)$ by the formula $\Theta(\gz)=F(\gz)/\gp(\gz)$, where
$\gp(\gz) =(1 + r \gz)^{d_{N}}$.
Since $\gp(\gz)$ is positive for $-1\leq \gz \leq1$, conditions
\eqref{positivity}
are equivalent to the following conditions on $F(\gz)$:
\begin{equation}
\label{positivityF}
\begin{array}{l}
F(\gz) > 0, \quad -1 < \gz <1,\\
\\
F(\pm 1) = 0,\\
\\
F'(- 1) = 2\gp(-1)/m_2 \quad \quad F'( 1) =-2\gp(1)/m_1.
\end{array}
\end{equation}

\subsection{The Einstein Conditions}
A K\"ahler metric is KE if and only if
$$\rho - \lambda \omega=0$$ for some constant $\lambda$. From \cite{ApCaGa06} we have that the Ricci form of an admissible metric  given by \eqref{g} equals
\begin{equation}\label{rho}
\rho =
\rho_{N} - \frac{1}{2} d d^c \log F =s_{N_n}\omega_{N_n} - 
\frac{1}{2}\frac{F'(\gz)}{\gp(\gz)}  \omega_{N_n}
-\frac{1}{2}\Bigl(\frac{F'(\gz)}{\gp(\gz)}\Bigr)'(\gz) d\gz \wedge \theta.
\end{equation}
Thus the KE 
condition is equivalent the 
ODE
\begin{equation} \label{KEodes}
\frac{F'(\gz)}{\gp(\gz)}  = 2s_{N_n} - 2 \lambda 
(\gz + 1/r).
\end{equation}
Now \eqref{positivityF} implies the necessary conditions
$$ 
\begin{array}{ccl}
s_{N_n} -  \lambda 
(-1 + 1/r)& = & 1/m_2\\
\\
s_{N_n}-  \lambda 
(1 + 1/r)&=& -1/m_1,
\end{array}
$$
which are equivalent to
\begin{equation}\label{fano}
\begin{array}{ccl}
2\lambda& = & 1/m_2+1/m_1\\
\\
2s_{N_n}r &=& (1+r)/m_2 + (1-r)/m_1.
\end{array}
\end{equation}

Since $s_{N_n}r >0$ we see that the base manifold $N$ (not surprisingly) must have positive scalar curvature.
If  \eqref{fano} is satisfied, then
\eqref{KEodes} is equivalent to the ODE:
\begin{equation} \label{KEode}
\frac{F'(\gz)}{\gp(\gz)}  = (1-\gz)/m_2 -(1+\gz)/m_1
\end{equation}

Now it is easy to see that for a solution satisfying \eqref{positivityF} to exist we
need
\begin{equation}\label{KEintegral}
\int_{-1}^1 \left((1-\gz)/m_2 -(1+\gz)/m_1\right) {\gp(\gz)} d\gz = 0.
\end{equation}
On the other hand, if this is satisfied
\begin{equation}\label{KEmetricF}
F(\gz) := \int_{-1}^\gz \left((1-t)/m_2 -(1+t)/m_1\right) {\gp(t)} dt
\end{equation}
would yield a solution of \eqref{KEodes} satisfying all the conditions of \eqref{positivityF}. 
Setting $s_{N_n}=\cali_N/n$ in the second equation of \eqref{fano} we have the following result.
\begin{proposition}\label{KEprop} 
Given admissible data and a choice of $m_1,m_2$ as above the admissible metric  \eqref{g}, 
with $\Theta(\gz) = \frac{F(\gz)}{\gp(t)}$ and $F(\gz)$ given by \eqref{KEmetricF},
is KE iff  
$$2r\cali_N/n = (1+r)/m_2 + (1-r)/m_1$$
and \eqref{KEintegral} are both satisfied.
\end{proposition}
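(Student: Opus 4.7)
The plan is to assemble the proof from the ingredients already laid out in the preceding discussion, verifying that both the stated conditions are necessary and, conversely, sufficient for the existence of a smooth function $F$ on $[-1,1]$ for which \eqref{g} with $\Theta=F/\mathrm{pc}$ defines a KE admissible metric on the log pair $(S_n,\Delta)$.

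First, I would record the necessity direction. Starting from the Ricci form formula \eqref{rho} and the K\"ahler form \eqref{g}, the KE condition $\rho=\lambda\omega$ is equivalent to equating the $\omega_{N_n}$-component and the $d\zeta\wedge\theta$-component, which together collapse to the single ODE \eqref{KEodes}. Evaluating this ODE at $\zeta=\pm 1$ and using the boundary values of $F'(\pm 1)$ dictated by \eqref{positivityF} produces the two linear equations whose solution is \eqref{fano}. Substituting $s_{N_n}=\mathcal{I}_N/n$ into the second equation of \eqref{fano} yields the first stated condition of the proposition. Once \eqref{fano} is in force the ODE reduces to \eqref{KEode}, and integrating from $-1$ subject to $F(-1)=0$ forces the candidate formula \eqref{KEmetricF}; then the remaining boundary condition $F(1)=0$ is precisely the integral identity \eqref{KEintegral}.

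For the sufficiency direction, I would assume the two conditions of the proposition hold and take $F$ given by \eqref{KEmetricF}. By construction $F$ solves \eqref{KEode}, and the conditions in \eqref{fano} (which are equivalent to the stated relation together with $2\lambda=1/m_1+1/m_2$) show that this $F$ in fact solves the original ODE \eqref{KEodes} with $\lambda=(1/m_1+1/m_2)/2$. Thus the candidate $\Theta=F/\mathrm{pc}$ automatically delivers a metric with $\rho=\lambda\omega$ provided it satisfies all of \eqref{positivityF}. The endpoint values $F(\pm 1)=0$ are built in (the left one from the lower limit of integration, the right one from \eqref{KEintegral}), and the derivative values $F'(\pm 1)$ at the endpoints follow from plugging $\zeta=\pm 1$ directly into the integrand using $\mathrm{pc}(\pm 1)=(1\pm r)^{d_N}$.

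The one substantive step is verifying the strict positivity $F(\zeta)>0$ for $-1<\zeta<1$. Since $0<|r|<1$, the polynomial $\mathrm{pc}(\zeta)=(1+r\zeta)^{d_N}$ is strictly positive on $[-1,1]$, so the sign of $F'$ coincides with that of the linear factor $(1-\zeta)/m_2-(1+\zeta)/m_1$. This factor equals $2/m_2>0$ at $\zeta=-1$, equals $-2/m_1<0$ at $\zeta=1$, and is strictly decreasing, so it has a unique zero $\zeta_0\in(-1,1)$; consequently $F$ is strictly increasing on $(-1,\zeta_0)$ and strictly decreasing on $(\zeta_0,1)$. Combined with $F(-1)=F(1)=0$, this forces $F>0$ on the open interval, completing the verification of \eqref{positivityF}. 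The potential obstacle I would watch for is the signs in \eqref{fano}: one must check that the prescribed $r$ (of the same sign as $g_{N_n}$, with $s_{N_n}r>0$) is consistent with $m_1,m_2>0$, but this is automatic because the stated relation $2r\mathcal{I}_N/n=(1+r)/m_2+(1-r)/m_1$ has positive right-hand side and $r\mathcal{I}_N/n=rs_{N_n}>0$.
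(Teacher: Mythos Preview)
Your proof is correct and follows exactly the route laid out in the paper's discussion preceding the proposition: necessity via evaluating the ODE \eqref{KEodes} at the endpoints to obtain \eqref{fano} and then \eqref{KEintegral} from $F(1)=0$, and sufficiency by taking $F$ as in \eqref{KEmetricF} and checking \eqref{positivityF}. The only addition is that you explicitly verify the positivity $F>0$ on $(-1,1)$ via the single sign change of the linear factor, a detail the paper simply asserts.
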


\begin{lemma}
For the log pair $(S_n,\Delta)$ with
$$\grD= (1-1/m_1)D_1+(1-1/m_2)D_2$$ 
the orbifold Chern class equals
$$c_1^{orb}(S_n,\Delta) = c_1(N) + \frac{1}{m_1}PD(D_1)+\frac{1}{m_2}PD(D_2),$$
where $c_1(N)$ is viewed as a pull-back
\end{lemma}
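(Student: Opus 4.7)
The strategy is to reduce the orbifold Chern class identity to a classical identity on the smooth variety $S_n$. By definition of the orbifold first Chern class of a log pair,
$$c_1^{orb}(S_n,\Delta) = c_1(S_n) - PD(\Delta) = c_1(S_n) - \bigl(1-\tfrac{1}{m_1}\bigr)PD(D_1) - \bigl(1-\tfrac{1}{m_2}\bigr)PD(D_2),$$
so the claim is equivalent to showing that
$$c_1(S_n) = \pi^* c_1(N) + PD(D_1) + PD(D_2),$$
where $\pi:S_n\to N$ is the projection.

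First I would split $c_1(S_n) = \pi^* c_1(N) + c_1(T_{S_n/N})$ using the standard exact sequence $0\to T_{S_n/N}\to TS_n\to \pi^*TN\to 0$. Then I would compute the relative first Chern class by applying the relative Euler sequence
$$0 \longrightarrow \mathcal{O}_{S_n}(-1) \longrightarrow \pi^*(\BOne\oplus L_n) \longrightarrow Q \longrightarrow 0$$
for the rank-two bundle $E=\BOne\oplus L_n$; in the rank-two case $T_{S_n/N}\cong \mathcal{O}_{S_n}(2)\otimes \pi^*\det E$, so, writing $\xi = c_1(\mathcal{O}_{S_n}(1))$ and using Lemma~\ref{c1L},
$$c_1(T_{S_n/N}) = 2\xi + n\,\pi^*[\gro_N].$$

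Second, I would identify $PD(D_1)$ and $PD(D_2)$. The section $D_2=\mathbb{P}(0\oplus L_n)$ is the vanishing locus of the composition $\mathcal{O}(-1)\hookrightarrow\pi^*E\twoheadrightarrow\pi^*\BOne$, which is a section of $\mathcal{O}_{S_n}(1)$; hence $PD(D_2)=\xi$. Similarly, $D_1=\mathbb{P}(\BOne\oplus 0)$ is the vanishing locus of $\mathcal{O}(-1)\hookrightarrow\pi^*E\twoheadrightarrow\pi^*L_n$, a section of $\mathcal{O}_{S_n}(1)\otimes\pi^*L_n$, so $PD(D_1)=\xi+n\,\pi^*[\gro_N]$. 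As a sanity check one may verify $PD(D_1)|_{D_1}=c_1(N_{D_1/S_n})=c_1(L_n)$, which agrees with the description of $S_n$ as the compactification of the line bundle $L_n$.

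Adding these gives $PD(D_1)+PD(D_2)=2\xi+n\,\pi^*[\gro_N]=c_1(T_{S_n/N})$, which combined with the splitting yields the desired formula for $c_1(S_n)$. Substituting into the definition of $c_1^{orb}(S_n,\Delta)$ and collecting terms gives
$$c_1^{orb}(S_n,\Delta) = \pi^*c_1(N) + \tfrac{1}{m_1}PD(D_1) + \tfrac{1}{m_2}PD(D_2),$$
as claimed. There is no real obstacle here; the only point that requires care is fixing the conventions for $\mathcal{O}(1)$ so that the identifications of $PD(D_1)$ and $PD(D_2)$ are consistent with Lemma~\ref{c1L}, which we have done by computing both Poincar\'e duals as vanishing loci of explicit sections.
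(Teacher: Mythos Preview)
Your proof is correct and shares the same overall architecture as the paper's: both begin with the standard log-pair formula
\[
c_1^{orb}(S_n,\Delta)=c_1(S_n)-\bigl(1-\tfrac{1}{m_1}\bigr)PD(D_1)-\bigl(1-\tfrac{1}{m_2}\bigr)PD(D_2)
\]
and reduce the lemma to the identity $c_1(S_n)=\pi^*c_1(N)+PD(D_1)+PD(D_2)$.

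The difference lies in how this last identity is verified. The paper takes a differential-geometric route: it plugs a convenient admissible metric (e.g.\ $F(\gz)=(1-\gz^2)\gp(\gz)$ with $m_1=m_2=1$) into the explicit Ricci form \eqref{rho} and reads off the cohomology class, remarking that the identity ``should also follow from general principles.'' Your argument \emph{is} those general principles: you split $c_1(S_n)$ via the relative tangent sequence, compute $c_1(T_{S_n/N})=2\xi+n\,\pi^*[\gro_N]$ from the relative Euler sequence for $E=\BOne\oplus L_n$, and identify $PD(D_1)$ and $PD(D_2)$ as vanishing loci of explicit sections of $\mathcal{O}(1)\otimes\pi^*L_n$ and $\mathcal{O}(1)$ respectively. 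The paper's approach has the advantage of staying entirely within the admissible framework already set up and requires no line-bundle bookkeeping; yours is cleaner conceptually, independent of any metric choice, and makes transparent the consistency check $PD(D_1)|_{D_1}=c_1(L_n)$ noted just before Theorem~\ref{preSE}.
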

\begin{proof}
The usual argument gives that
$$c_1^{orb}(S_n,\Delta) = c_1(S_n) +  (\frac{1}{m_1}-1)PD(D_1)+(\frac{1}{m_2}-1)PD(D_2)$$
and the lemma now follows from the fact that
$$c_1(S_n) = c_1(N) + PD(D_1)+PD(D_2).$$
One can verify the last fact by using the explicit Ricci form above for some convenient choice of admissible metric (e.g. take $F(\gz)=(1-\gz^2)\gp(\gz)$) in the case $m_1=m_2=1$, but it should also follow from general principles.
\end{proof}

In order to prove Theorem \ref{admjoinse}, we now assume that $(S_n,\Delta)$ arises as the quotient of the flow of a quasi-regular Reeb vector field, $\xi_\bfv$,  as in Theorem \ref{preSE}. Since $c_1(\cald)=0$, the K\"ahler class of the quotient metric must be a multiple of $c_1^{orb}(S_n,\Delta)$.
Since $2\pi c_1(N) = s_{N_n}[\omega_{N_n}] = \cali_N [\omega_{N_n}] /n$ and $2\pi(PD(D_1)-PD(D_2))=[\omega_{N_n}]$ (see e.g. Section 1.3 in \cite{ACGT08}) we see that
$$4\pi c_1^{orb}(S_n,\Delta) = (2\cali_N/n + 1/m_1-1/m_2)[\omega_{N_n}] + (1/m_1+1/m_2)2\pi(PD(D_1)+PD(D_2))$$
and thus $\Omega_{\mathbf r}$ is a multiple of $c_1^{orb}(S_n,\Delta)$ precisely when the first condition of Propostion \ref{KEprop}, namely
$2r\cali_N/n = (1+r)/m_2 + (1-r)/m_1$, is satisfied. Using the formulas for $n$, $m_1$ and $m_2$ in Theorem \ref{preSE} we get
$$ r = \frac{w_1v_2-w_2v_1}{w_1v_2+w_2v_1}.$$

\begin{remark}
As can be seen in our paper \cite{BoTo14a}, the factor relating the K\"ahler class of the quotient metric and the K\"ahler class $\Omega_r$ only depends on the initial choice of $l_2$ (to be precise the factor is $l_2/4\pi$), so for simplicity we will simply ignore it from here on out. 
\end{remark}

Canceling out common factors, Proposition \ref{KEprop} implies that (up to isotopy) the Sasaki structure associated to
$\xi_\bfv$ is  $\eta$-Einstein (and thus, up to transverse homothety, SE)
iff
\begin{equation}\label{KEintegral2}
\int_{-1}^1 \left((v_1-v_2)-(v_1+v_2)\gz) \right)((w_1v_2+w_2v_1)+ (w_1v_2-w_2v_1)\gz)^{d_N}d\gz = 0.
\end{equation}
For convenience we set $w_2/w_1=t$ and $v_2/v_1=c$. For the admissible set-up to make sense we assume $c\neq t$. We also assume $0<t< 1$ (i.e. $w_1>w_2$). Now equation \eqref{KEintegral2} is equivalent to
\begin{equation}\label{KEintegral3}
\int_{-1}^1 \left((1-c)-(1+c)\gz) \right)((c+t)+ (c-t)\gz)^{d_N}d\gz = 0.
\end{equation}
Let $f(c)$ denote the left hand side of \eqref{KEintegral3} and assume $t\in (0,1)\cap{\mathbb Q}$ is fixed. Now it is easy to check that
$$f(t) >0\quad\quad\text{and}\quad\quad \lim_{c\rightarrow +\infty}f(c) =-\infty.$$
Thus $\exists c_{t} \in (t,+\infty)$ such that \eqref{KEintegral3} is solved. 
Usually this $c_t$ will be irrational. As discussed in Remark 5.4 of \cite{BoTo13}, this will correspond to an irregular SE structure. In order to understand this we give the admissible construction on the Sasaki level.

Let $M_0$ denote the subspace of $M_{l_1,l_2,\bfw}$ where $\xi_\bfv$ acts freely. Note that $M_0$ is independent of $\bfv$ and is the total space of a circle bundle over $S_{n0}$. Moreover, for any $\bfv$ we have
\begin{equation}\label{M0}
M_{l_1,l_2,\bfw}/M_0=\pi_\bfv^*D_1\bigsqcup \pi_\bfv^*D_2.
\end{equation}
We can now pullback the admissible K\"ahler class given by \eqref{admKahclass} to give an admissible transverse K\"ahler class $\grO_r^T\in H^{1,1}_B(\calf_{\xi_\bfv})$, as well as the admissible data defined in Equations \eqref{positivity} and \eqref{g} to give the admissible Sasakian data in the quasi-regular case. Explicitly we have the transverse K\"ahler metric and form
\begin{equation}\label{gT}
g^T=\frac{1+r\gtz}{r}\pi_\bfv^*g_{N_n}+\frac {d\gtz^2}
{\Theta (\gtz)}+\Theta (\gtz)(\pi_\bfv^*\theta)^2,\quad
d\eta_\bfv = \frac{1+r\gtz}{r}\pi_\bfv^*\omega_{N_n} + d\gtz \wedge \pi_\bfv^*\theta,
\end{equation}
where $\gtz:M_{l_1,l_2,\bfw}\ra{1.6} [-1,1]$ is the moment map of the lifted circle action of the moment map $\gz$. Furthermore, $\Theta(\gtz)$ satisfies its previous conditions, and it is important to realize that the only dependence of the admissible Sasakian data on $\bfv$ is through the boundary conditions 
$$ \Theta'(-1) = 2/m_2,\quad \quad \Theta'(1)=-2/m_1, \qquad m_i=v_im.$$
We then get a Sasaki metric in the usual way, namely $g_\bfv=g^T+\eta_\bfv\otimes \eta_\bfv$ together with the full Sasakian structure $\cals_\bfv=(\xi_\bfv,\eta_\bfv,\Phi_\bfv,g_v)$. Although this construction was done for a pair of relatively prime positive integers $v_1,v_2$ we see that by continuity all the data in the construction makes perfect sense on $M_{l_1,l_2,\bfw}$ for any pair of positive real numbers $v_1,v_2$. This defines the admissible Sasaki data in the case of irregular Sasakian structures. Thus, an irregular solution to Equation \eqref{KEintegral3} gives a positive $\eta$-Einstein metric on the Sasaki manifold and hence an SE metrics by a transverse homothety.

\begin{example}
Although the majority of the SE structures obtained in this paper are irregular,
we can, however, produce many quasi-regular SE cases as follows: Set $c=kt$. Then 
\eqref{KEintegral3} is equivalent with
\begin{equation}\label{KEintegral4}
\int_{-1}^1 \left((1-kt)-(1+kt)\gz) \right)((k+1)+ (k-1)\gz)^{d_N}d\gz = 0.
\end{equation}
or
\begin{equation}\label{KEintegral5}
t= \frac{\int_{-1}^1 \left(1-\gz \right)((k+1)+ (k-1)\gz)^{d_N}d\gz}{k\int_{-1}^1 \left(1+\gz \right)((k+1)+ (k-1)\gz)^{d_N}d\gz}.
\end{equation}
\begin{lemma}
For $k>1$,
$$0<\int_{-1}^1 \left(1-\gz \right)((k+1)+ (k-1)\gz)^{d_N}d\gz<\int_{-1}^1 \left(1+\gz \right)((k+1)+ (k-1)\gz)^{d_N}d\gz.$$
\end{lemma}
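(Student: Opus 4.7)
The weight $w(\gz):=((k+1)+(k-1)\gz)^{d_N}$ is positive on $[-1,1]$ for every $k>1$: the linear factor equals $2$ at $\gz=-1$ and $2k$ at $\gz=1$, and since $k-1>0$ it is strictly increasing in $\gz$. This positivity, together with the fact that $1-\gz\geq 0$ on $[-1,1]$ with strict inequality on $[-1,1)$, immediately gives the first inequality
$$\int_{-1}^1(1-\gz)w(\gz)\,d\gz>0.$$

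For the second inequality I would subtract the left-hand integral from the right-hand one, reducing the claim to
$$\int_{-1}^1 2\gz\,w(\gz)\,d\gz>0.$$
The strategy is to fold the interval $[-1,0]$ onto $[0,1]$ by the substitution $\gz\mapsto-\gz$, obtaining
$$\int_{-1}^1 2\gz\,w(\gz)\,d\gz=\int_0^1 2\gz\bigl[((k+1)+(k-1)\gz)^{d_N}-((k+1)-(k-1)\gz)^{d_N}\bigr]\,d\gz.$$
For $\gz\in(0,1]$ and $k>1$ both quantities $(k+1)\pm(k-1)\gz$ are positive, and the one with the plus sign is strictly larger; since the exponent $d_N$ is a positive integer, raising to the $d_N$-th power preserves this strict inequality. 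Thus the integrand is strictly positive on $(0,1]$, and the integral is strictly positive. This proves the second inequality.

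There is no real obstacle here beyond checking the elementary positivity of the weight, which is why $k>1$ (rather than merely $k\geq 0$) is assumed: it ensures that $w$ is genuinely increasing, so that the folding argument produces a strictly positive integrand. The same folding trick would, of course, give equality when $k=1$ (constant weight) and a reversed inequality when $0<k<1$ (decreasing weight), matching the intuition that the inequality encodes the tilt of the weight toward $\gz=+1$.
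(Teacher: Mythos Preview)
Your argument is correct. Both inequalities are handled cleanly: the positivity of $(1-\gz)w(\gz)$ gives the first, and the folding $\gz\mapsto-\gz$ reduces the second to the pointwise inequality $w(\gz)>w(-\gz)$ on $(0,1]$, which holds because the base $(k+1)+(k-1)\gz$ is positive and strictly increasing when $k>1$.

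The paper takes a different route for the second inequality. After reducing to $\int_{-1}^1\gz\,w(\gz)\,d\gz>0$, it integrates explicitly to obtain a polynomial $p(k)=-d_N+(2+d_N)k-(2+d_N)k^{d_N+1}+d_Nk^{d_N+2}$, then checks $p(1)=p'(1)=0$ and $p''(k)>0$ for $k>1$ (so $p(k)>0$ for $k>1$). Your folding argument is more elementary and conceptual: it avoids the explicit antiderivative and the polynomial analysis entirely, and it makes transparent why the inequality reverses for $0<k<1$ and becomes an equality at $k=1$. The paper's computation, on the other hand, produces an explicit quantitative expression for the integral, which could be useful if one wanted finer information, but for the bare inequality your approach is shorter and more robust.
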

\begin{proof}
The first inequality is obvious and the next is equivalent to 
$$\int_{-1}^{1} \gz ((k+1)+ (k-1)\gz)^{d_N}d\gz >0.$$
By integrating, this in turn is equivalent to
$$-d_{N_n} + (2+d_{N_n})k - (2+d_{N_n})k^{d_{N_n}+1} + d_{N_n}k^{d_{N_n}+2} >0.$$
Setting $p(k) = -d_{N_n} + (2+d_{N_n})k - (2+d_{N_n})k^{d_{N_n}+1} + d_{N_n}k^{d_{N_n}+2}$ we  observe that $p(1)=p'(1)=0$ while $p\,''(k) > 0$ for all $k>1$. Thus $p(k)>0$ for all $k>1$ and hence the inequality holds.
 \end{proof}
Now it follows that for any given $k\in (1,+\infty)\cap {\mathbb Q}$, $\exists t \in (0,1)\cap {\mathbb Q}$ (determined by \eqref{KEintegral5}) such that if the co-prime integers $w_1$ and $w_2$ are such that $w_2/w_1=t$ and then co-prime integers $v_1$ and $v_2$ are such that $v_2/v_1= kt$ (and $l_1$ and $l_2$ are chosen according to Lemma \ref{c10}) then the ray determined by $(v_1,v_2)$ in the $\bfw$-Sasaki cone contains a quasi-regular SE structure.

Note that when $d_N=1$, equation \eqref{KEintegral5} is
$$t=\frac{2+k}{k(1+2k)}.$$
The quasi-regular SE solutions \cite{GMSW04a} for $Y^{p,q}$, as described in Example \ref{Ypq}, are recovered by choosing
$$k= \frac{q+\sqrt{4p^2-3q^2}}{2(p-q)},$$
assuming, as is prescribed by e.g. Theorem 11.4.5 in \cite{BG05},
that $4p^2-3q^2=n^2$, for some $n \in \bbz$. 
Note that, conversely for $k=a/b$ with co-prime $a>b \in \bbz^+$, we have
$p=a b + a^2+b^2$ and $q=a^2-b^2$.
\end{example}

In general, the following result follows from Theorem \ref{admjoinse}.
\begin{proposition}\label{ypqse}
The Reeb vector field of the unique Sasaki-Einstein metric of $Y^{p,q}$ lies in the $\bfw$-Sasaki cone with $\bfw$ determined by Equations (\ref{pqw}).
\end{proposition}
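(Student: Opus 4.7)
\medskip
\noindent\textbf{Proof proposal for Proposition \ref{ypqse}.}
The plan is to combine Theorem~\ref{admjoinse} with the identification of $Y^{p,q}$ as a join that was carried out in Example~\ref{Ypq}, and then to close the argument with the standard uniqueness for the Sasaki--Einstein Reeb vector field in a contact structure of Sasaki type with $c_1(\cald)=0$.

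First, I would apply Theorem~\ref{admjoinse} in the five-dimensional setting $N=\bbc\bbp^1$ (so $\cali_N=2$, $d_N=1$), with
$$\bfw=\frac{1}{\gcd(p+q,p-q)}\bigl(p+q,\,p-q\bigr)$$
and $(l_1,l_2)$ given by the formulas (\ref{pqrel}) derived in Example~\ref{Ypq}. The theorem produces a vector $\bfv\in(\bbr^+)^2$ such that the Reeb field $\xi_\bfv$ in the $\bfw$-Sasaki cone of $M_{l_1,l_2,\bfw}=S^3\star_{l_1,l_2}S^3_\bfw$ carries a Sasaki--Einstein structure $\cals_\bfv$. By Lemma~\ref{c10}, the induced contact bundle $\cald_{l_1,l_2,\bfw}$ satisfies $c_1(\cald_{l_1,l_2,\bfw})=0$, as required.

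Second, I would invoke the identification carried out in Example~\ref{Ypq}: with these specific choices of $(l_1,l_2,\bfw)$, the manifold $M_{l_1,l_2,\bfw}$ is diffeomorphic to $S^2\times S^3$ and carries the contact structure $Y^{p,q}$ of \cite{GMSW04a}. (One needs to check that the contact structure constructed from the join agrees, up to isotopy, with the one described by the physicists; this is essentially the content of the discussion in Example~\ref{Ypq}, and the key verification is that the relative Fano indices coming from Lemma~\ref{c10} are consistent with the formulas for $p$ and $q$.) Thus Theorem~\ref{admjoinse} provides an SE metric on $Y^{p,q}$ whose Reeb field lies in the $\bfw$-Sasaki cone by construction.

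Finally, I would invoke uniqueness: on a fixed contact manifold $(Y^{p,q},\cald)$ of Sasaki type with $c_1(\cald)=0$, the Reeb vector field carrying a Sasaki--Einstein metric (equivalently a transverse K\"ahler--Einstein metric of positive scalar curvature) is unique. This is a standard consequence of the volume-minimization / Matsushima-type results (Martelli--Sparks--Yau, Futaki--Ono--Wang) combined with uniqueness of the transverse K\"ahler--Einstein metric in a fixed basic first Chern class. Therefore the Reeb field $\xi_\bfv$ produced by our join construction must coincide with that of the unique SE metric on $Y^{p,q}$, proving the proposition.

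The only genuinely nontrivial step is the second one, namely ensuring that the contact structure on $S^2\times S^3$ obtained from the join really is the same (up to isotopy) as the one on which the physicists constructed their SE metric; the rest is essentially bookkeeping with Theorem~\ref{admjoinse} and invocation of uniqueness.
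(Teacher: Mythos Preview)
Your proposal is correct and follows essentially the same approach as the paper, which simply states that the result ``follows from Theorem~\ref{admjoinse}'' together with the identification of $Y^{p,q}$ as the join $S^3\star_{l_1,l_2}S^3_\bfw$ worked out in Example~\ref{Ypq} (and in \cite{BoPa10}). Your explicit invocation of uniqueness of the SE Reeb field is the right way to close the argument---the paper leaves this implicit in the phrase ``the unique Sasaki--Einstein metric,'' but since the $\bfw$-Sasaki cone is only a 2-dimensional subcone of the full 3-dimensional toric Sasaki cone of $Y^{p,q}$, one does need to know that the SE Reeb field is unique in the full cone to conclude it coincides with the one produced by Theorem~\ref{admjoinse}.
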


\subsection{Sasaki-Ricci solitons and extremal Sasaki metrics}
Although our main focus in this paper has been on SE structures, it is natural to note that
generalizing \eqref{KEodes} to
\begin{equation} \label{KRSode}
\frac{F'(\gz)}{\gp(\gz)}   - a \frac{F'(\gz)}{\gp(\gz)}= 2s_{N_n} - 2 \lambda 
(\gz + 1/r),
\end{equation}
where $a\in\bbr$ is some constant, corresponds to generalizing
the KE equation $\rho -\lambda \omega=0$ to the K\"ahler Ricci soliton (KRS) equation
$$\rho -\lambda \omega = \call_V\omega,$$
with $V = \frac{a}{2} grad_g \gz$.
By following e.g. Section 3 in \cite{ACGT08b} and adapting it to our more general endpoint conditions \eqref{positivityF} (but letting $d_0=d_\infty=0$), it is now straightforward and completely standard
to verify that Proposition \ref{KEprop} generalizes with ``KE'' replaced by ``KRS'',
\eqref{KEintegral} replaced by
\begin{equation}\label{KRSintegral}
\int_{-1}^1  e^{-a\, \gz}\left((1-\gz)/m_2 -(1+\gz)/m_1\right) {\gp(\gz)} d\gz = 0,
\end{equation}
and \eqref{KEmetricF} replaced by
\begin{equation}\label{KRSmetricF}
F(\gz) :=e^{a\,\gz} \int_{-1}^\gz e^{-a\,t} \left((1-t)/m_2 -(1+t)/m_1\right) {\gp(t)} dt.
\end{equation}

Moreover, equation \eqref{KRSintegral} can always be solved for some $a\in \bbr$.
Thus we realize that  (up to isotopy) the Sasaki structure associated to every single ray, $\xi_\bfv$, in our $\bfw$-Sasaki cone is a Sasaki-Ricci soliton
(as defined in \cite{FOW06}). We mention also that Sasaki-Ricci solitons on toric 5-manifolds were studied in \cite{LeTo13}.

Our set-up (starting from a join construction) allows for cases where no regular ray in the $\bfw$-Sasaki cone exists.
If, however, the given $\bfw$-Sasaki cone does admit a regular ray, then the transverse K\"ahler structure is a smooth K\"ahler Ricci soliton and the existence of an SE metric in some  ray of the Sasaki cone is predicted by the work of \cite{MaNa13}.

Another generalization of \eqref{KEodes} would be to require that
\begin{equation}\label{extremal1}
F''(\gz) = (1+r \gz)^{d_N-1} P(\gz),
\end{equation}
where $P(\gz)$ is a polynomial of degree $2$ satisfying that
\begin{equation}\label{extremal2}
P(-1/r) = 2 d_N s_{N_n} r.
\end{equation}
It is well known that this corresponds to extremal K\"ahler metrics (see e.g. \cite{ACGT08}).
Moreover, similarly to the smooth case, one easily sees that \eqref{extremal1} with \eqref{extremal2} has a unique solution $F(\gz)$ satisfying the endpoint conditions of \eqref{positivityF}. 
Finally, since $N$ is here a {\em positive} K\"ahler-Einstein metric, this polynomial $F(\gz)$ also satisfies the positivity condition of \eqref{positivityF} by the standard root-counting argument introduced by Hwang \cite{Hwa94} and Guan \cite{Gua95}. Thus (up to isotopy) the Sasakian structure associated to every single ray, $\xi_\bfv$, in our $\bfw$-Sasaki cone is extremal.

\def\cprime{$'$} \def\cprime{$'$} \def\cprime{$'$} \def\cprime{$'$}
  \def\cprime{$'$} \def\cprime{$'$} \def\cprime{$'$} \def\cprime{$'$}
  \def\cdprime{$''$} \def\cprime{$'$} \def\cprime{$'$} \def\cprime{$'$}
  \def\cprime{$'$}
\providecommand{\bysame}{\leavevmode\hbox to3em{\hrulefill}\thinspace}
\providecommand{\MR}{\relax\ifhmode\unskip\space\fi MR }

\providecommand{\MRhref}[2]{%
  \href{http://www.ams.org/mathscinet-getitem?mr=#1}{#2}
}
\providecommand{\href}[2]{#2}

\end{document}